\newif\ifshort
\newif\ifarxiv
\def\short{\ifshort\ifarxiv\color{journalcolor}\fi}
\def\arxiv{\ifarxiv\ifshort\color{arxivcolor}\fi}
\definecolor{grey}{rgb}{.7,.7,.7}
\definecolor{blue}{rgb}{0,0,.8}
\definecolor{red}{rgb}{.8,0,0}
\definecolor{green}{rgb}{0,.4,0}
\definecolor{gold}{rgb}{0.8,0.6,0.1}
\definecolor{brown}{rgb}{0.8,0.4,0.1}
\definecolor{arxivcolor}{rgb}{0.5,0.5,0}
\definecolor{journalcolor}{rgb}{0.5,0,1}
\definecolor{purple}{rgb}{0.6,0.2,0.6}
\definecolor{pastelgreen}{rgb}{0,0.65,0.1}
\long\def\hl#1{\color{black} #1\normalcolor}
\long\def\jl#1{\color{black} #1\normalcolor}
\long\def\sug#1{\color{black} #1\normalcolor}
\long\def\cancel#1{\color{red}\ifmmode\text{\sout{\ensuremath{#1}}}\else\sout{#1}\fi\normalcolor}
\def\hideme{\ifdim\lastskip>0pt \ignorespaces\fi}
\long\def\cancel#1{\hideme}
\definecolor{blue3}{rgb}{.1,.0,.4}
\declaretheorem[name=Theorem,numberwithin=section]{thm} 
\newtheorem*{thm*}{Theorem}
\newtheorem*{define*}{Definition}
\newtheorem{define}[thm]{Definition}
\newtheorem*{lemma*}{Lemma}
\newtheorem{lemma}[define]{Lemma}
\newtheorem*{algorithm*}{Algorithm}
\newtheorem{algorithm}[define]{Algorithm}
\newtheorem*{construction*}{Construction}
\newtheorem*{prop*}{Proposition}
\newtheorem{prop}[define]{Proposition}
\newtheorem*{obs*}{Observation}
\newtheorem*{fact*}{Fact}
\newtheorem*{remark*}{Remark}
\newtheorem{remark}[define]{Remark}
\newtheorem*{quest*}{Question}
\newtheorem{quest}[define]{Question}
\newtheorem*{cor*}{Corollary}
\newtheorem{cor}[define]{Corollary}
\newtheorem*{conjecture*}{Conjecture}
\newtheorem*{question*}{Question}
\newtheorem*{example*}{Example}
\newcounter{claimcounter}[define]
\numberwithin{claimcounter}{define}
\newtheorem*{claim*}{Claim}
\newtheorem{claim}[claimcounter]{Claim}
\numberwithin{equation}{section}
\newcommand{\R}{\mathbb{R}}
\newcommand{\Z}{\mathbb{Z}}
\newcommand{\lip}{\operatorname{Lip}}
\newcommand{\dist}{\operatorname{dist}}
\newcommand{\N}{\mathbb{N}}
\newcommand{\M}{\mathcal{M}}
\newcommand{\leb}{\mathcal{L}}
\DeclareMathOperator{\diam}{diam}
\DeclareMathOperator{\intr}{int}
\DeclareMathOperator{\dom}{domain}
\newcommand{\mb}[1]{\mathbf{#1}}
\newcommand{\mc}[1]{\mathcal{#1}}
\newcommand{\niceint}[3]{\int_{#1}{#2\,\mathrm{d}#3}}
\newcommand{\lint}[2]{\niceint{#1}{#2}{\leb}}
\newcommand{\poly}[1]{\operatorname{poly}\left(#1\right)}
\newcommand{\ply}[2]{\operatorname{poly}_{#1}\left(#2\right)}
\newcommand{\pl}[3]{\operatorname{poly}^{#1}_{#2}\left(#3\right)}
\newcommand{\abs}[1]{\left|#1\right|}
\newcommand{\lnorm}[2]{\left\|#2\right\|_#1}
\newcommand{\enorm}[1]{\lnorm{2}{#1}}
\newcommand{\norm}[1]{\left\|#1\right\|}
\newcommand{\Sq}{\mathcal{S}}
\newcommand{\bilip}{\operatorname{biLip}}
\newcommand{\lipomeg}{\mathfrak{L}_{\omega}}
\newcommand{\lipgen}[1]{\mathfrak{L}_{#1}}
\newcommand{\bilipomeg}{\mathfrak{biL}_{\omega}}
\newcommand{\bilipgen}[1]{\mathfrak{biL}_{#1}}
\newcommand{\set}[1]{\left\{#1\right\}}
\newcommand{\cl}[1]{\overline{#1}}
\newcommand{\rest}[2]{#1|_{#2}}
\DeclarePairedDelimiter{\floor}{\lfloor}{\rfloor}
\DeclarePairedDelimiter{\br}{(}{)}
\def\XXint#1#2#3{{\setbox0=\hbox{$#1{#2#3}{\int}$ }
		\vcenter{\hbox{$#2#3$ }}\kern-.6\wd0}}
\newcommand{\mylabel}[2]{#2\def\@currentlabel{#2}\label{#1}}
\def\blfootnote{\xdef\@thefnmark{}\@footnotetext} 
\title{Highly irregular separated nets.}
\author{Michael Dymond, Vojt\v ech Kalu\v za}
\date{}
\begin{document}
	\maketitle
\abstract{In 1998 Burago and Kleiner and (independently) McMullen gave examples of separated nets in Euclidean space which are bilipschitz non-equivalent to the integer lattice. We study weaker notions \jl{than bilipschitz equivalence} and demonstrate that such notions also \jl{distinguish between separated nets}. Put differently, we find occurrences of particularly strong divergence of separated nets from the integer lattice. Our approach generalises that of Burago and Kleiner and McMullen which takes place largely in a continuous setting. Existence of irregular separated nets is verified via the existence of non-realisable density functions $\rho\colon [0,1]^{d}\to(0,\infty)$. In the present work we obtain stronger types of non-realisable densities.}			
\blfootnote{\jl{This work was done while both authors were employed at the University of Innsbruck and enjoyed the full support of Austrian Science Fund (FWF): P 30902-N35.}}	
	\section{Introduction}
The question of whether two separated nets of a Euclidean space may carry inherently different metric structures has been considered by many authors. Indeed, Gromov's 1993 question \cite{Grom} of whether any two such nets are necessarily bilipschitz equivalent\footnote{Several sources, e.g.~\cite{BK1}, \cite[p.63]{Katok_handbook}, \cite[p.80]{metric_geom}, state that this question arose first in the work of Furstenberg in the 1960's in the context of ergodic theory and dynamical systems, but we are not aware of any reference for that.} remained open for several years, before being resolved negatively by McMullen~\cite{McM} and Burago and Kleiner~\cite{BK1} in 1998.
Whilst the aforementioned works provide examples of separated nets in $\R^{d}$, $d\geq 2$, which are not bilipschitz equivalent to the integer lattice, the present article focuses on the \emph{extent} to which this divergence can occur.

The first insight on this question comes from McMullen~\cite{McM}, who measures divergence between separated nets by introducing the notion of \emph{homogeneous bi-Hölder \jl{mappings}}. To permit finer description of how much separated nets may differ we generalise McMullen's notion of a homogeneous Hölder \jl{mapping} from \cite{McM} in the following way:
\begin{define}\label{d:hom_omega_map}
Given two separated nets $X,Y\subseteq \R^{d}$ and a modulus of continuity\footnote{The notions of the \emph{separated net} and the \emph{modulus of continuity} are defined in Section~\ref{s:def} \jl{in Definitions~\ref{def:sep_net} and \ref{def:modulus}, respectively}.}                                                                        
 $\omega$, a mapping $f\colon X\to Y$ is called a \emph{homogeneous $\omega$-mapping} if there \jl{are} constant\jl{s} $K>0$ \jl{and $a\in(0,1)$} such that
\begin{linenomath}
\begin{equation*}
\lnorm{2}{f(x_{2})-f(x_{1})}\leq KR\omega\left(\frac{\lnorm{2}{x_{2}-x_{1}}}{R}\right)
\end{equation*}
\end{linenomath}
for all $R>0$ and $x_{1},x_{2}\in X\cap B(\mb{0},R)$ \jl{with $\enorm{x_2-x_1}<aR$}.
The net $X$ \jl{is said to be \emph{$\omega$-regular with respect to $Y$}} if there is a bijection $f\colon X\to Y$ so that both $f$ and $f^{-1}$ are homogeneous $\omega$-mappings. \jl{Otherwise, we call $X$ \emph{$\omega$-irregular with respect to $Y$}. In the case that $Y=\Z^{d}$, we shorten these terms and just say that $X$ is $\omega$-regular or $\omega$-irregular respectively.}
\end{define}
For \jl{Hölder moduli of continuity $\omega(t)=t^{\beta}$ with $\beta\in (0,1)$}, our notion of \jl{$\omega$-regularity} \jl{refers to the existence of a homogeneous bi-Hölder bijection in the sense of McMullen~\cite[Thm.~5.1]{McM}.} In the case \jl{of the Lipschitz modulus of continuity $\omega(t)=t$,} it reduces to bilipschitz equivalence.
Clearly, for moduli of continuity \jl{$\omega$ satisfying $\lim_{t\to 0}\frac{\omega(t)}{t}=\infty$}, $\omega$-\jl{regularity} is weaker than bilipschitz equivalence \jl{to the integer lattice}, at least formally\footnote{In a new preprint~\cite{DK2021divergence} we verify that for the moduli of continuity $\omega$ of Theorem~\ref{thm:main_result_weak}, $\omega$-regularity is indeed strictly weaker than bilipschitz equivalence. Although this appears to be intuitively true, the only proof of which we are aware is rather long.}. Accordingly, determining the moduli of continuity $\omega$ with respect to which \jl{a separated net} $X$ is \jl{$\omega$-regular} provides a finer \jl{comparison} of \jl{its} metric structure \jl{with that of the integer lattice}.

McMullen~\cite{McM} proves that \jl{any separated net in the Euclidean space $\R^{d}$ is $\omega$-regular for some Hölder modulus of continuity $\omega$}. \jl{Since the composition of two homogeneous bi-Hölder bijections (in the sense of \cite[Thm.~5.1]{McM}) is again a bi-Hölder bijection,} this provides an upper bound on the divergence between any two separated nets in Euclidean space\jl{: for any two separated nets $X,Y\subseteq \R^{d}$, $X$ is $\omega$-regular with respect to $Y$ for some Hölder modulus of continuity $\omega$.}
McMullen's result leads naturally to the question of whether this Hölder bound is tight.
Indeed, alongside the existence of bilipschitz non-equivalent separated nets, it invites the question of whether there are any moduli of continuity $\omega$ asymptotically greater than Lipschitz which admit \jl{$\omega$-irregular} separated nets. 
The present article initiates the study of \jl{$\omega$-regularity} for the `missing range' of moduli $\omega$ not treated by McMullen, Burago and Kleiner, that is, those moduli $\omega$ lying in between Lipschitz and Hölder. We will provide the first examples of such $\omega$ admitting \jl{$\omega$-irregular separated nets}.
\begin{thm}\label{thm:main_result_weak}
Let $d\geq 2$. Then there is $\alpha_0=\alpha_0(d)>0$ \jl{and a} separated net \jl{$X$} in $\R^d$ \jl{such that $X$ is $\omega$-irregular} for \jl{any} modulus of continuity
\jl{$\omega$ such that there is $a>0$ for which }$\omega(t)=t\jl{\bigl(}\log\frac{1}{t}\bigr)^{\alpha_0}$ \jl{for all $t\in (0,a)$}.
\end{thm} 
The ultimate aim of continuing research in this programme will be to establish a sharp threshold for existence of \jl{$\omega$-irregular separated nets}: \jl{a} reasonable conjecture, given the state of the art, is that \jl{$\omega$-irregular separated nets exist} in multidimensional Euclidean space if and only if the modulus of continuity $\omega$ is asymptotically smaller than all Hölder moduli, \jl{that is, if and only if $\lim_{t\to 0}\frac{\omega(t)}{t^\beta}=0$ for all $\beta\in (0,1)$}.
We emphasise that the validity of both implications in this conjecture are open questions.

In a wider setting, the present work contributes to a large amount of research interest in separated nets. Motivation for the study of separated nets comes from geometric classification problems such as for Banach spaces or metric groups (see, e.g., \cite[Sec.~10.3]{benyamini1998geometric} and \cite{Grom, GGT}), from the study of dynamical systems (e.g., \cite{Katok_handbook})  and quasi-periodic structures in mathematical physics (e.g., \cite{quasicrystals, aperiodic1, aperiodic2}).
The work \cite{BK1} of Burago and Kleiner has been further studied, refined and extended in \cite{Garber, Magazinov, Navas, DKK2018}. In \cite{BK2} Burago and Kleiner establish a sufficient criterion for two separated nets in $\R^d$ to be bilipschitz equivalent. Moreover, Magazinov~\cite{Magazinov} proves that the number of bilipschitz equivalence classes of separated nets in multidimensional Euclidean space has the cardinality of the continuum.

Let us be more specific about how the notion of $\omega$-\jl{regularity} provides a deeper insight into the metric structures of separated nets. Bilipschitz non-equivalence of two nets $X$ and $Y$ may be naturally ordered according to the optimal asymptotic growth of the \hl{Lipschitz} constants $\lip(f|_{B(\mb{0},R)\jl{\cap{X}}})$ \hl{and $\lip(f^{-1}|_{B(\jl{\mb{0}},R)\jl{\cap{Y}}})$} as $R\to\infty$ among bijections $f\colon X\to Y$.
Intuitively, if two nets $X$ and $Y$ are bilipschitz non-equivalent, but see slow asymptotic growth of $(\lip(f|_{B(\mb{0},R)\jl{\cap{X}}})_{R>0}$ \hl{and $(\lip(f^{-1}|_{B(\mb{0},R)\jl{\cap{Y}}})_{R>0}$} for some bijection $f\colon X\to Y$, it means that high distortion is only seen by comparing very large portions of $X$ and $Y$.
\hl{We} note that for concave \jl{and increasing} moduli $\omega$, which are the only moduli that we consider, a control on the growth of \hl{$(\lip(g|_{B(\mb{0},R)\jl{\cap{\dom(g)}}}))_{R>0}$} \hl{for $g\in\set{f,f^{-1}}$} is a necessary condition for $X$ \jl{to be $\omega$-regular with respect to} $Y$. This follows from the easy observation that\jl{, in this case,} whenever \jl{$X$ is $\omega$-regular with respect to $Y$ via}
$f\colon X\to Y$, we have
\begin{linenomath}
\begin{equation}\label{eq:growth_condition}
\hl{\lip(g|_{B(\mb{0},R)\jl{\cap{\dom(g)}}})}\leq KR\omega\left(\frac{\jl{a}}{R}\right)
\end{equation}
\end{linenomath}
for \hl{$g\in\set{f,f^{-1}}$}, all $R>\jl{1}$ \hl{and some constant\jl{s} $K>0$} \jl{and $a\in(0,1)$}.
Given this connection between the notions of \jl{$\omega$-regularity} and the rate of growth of restricted \hl{Lipschitz} constants, one might ask whether it is possible to formulate Theorem~\ref{thm:main_result_weak} in the language of restricted \hl{Lipschitz} constants.
A natural expectation is that for the separated net $X$ given by Theorem~\ref{thm:main_result_weak} and any bijection $f\colon X\to \Z^{d}$ we have the failure of \eqref{eq:growth_condition} with $\omega(t)=t\left(\log\frac{1}{t}\right)^{\alpha_0}$, that is, that the restricted \hl{Lipschitz} constants \hl{$\lip(g|_{B(\jl{\mb{0}},R)\jl{\cap{\dom(g)}}})$} for $g=f$ or $g=f^{-1}$ grow asymptotically faster than $(\log R)^{\alpha_{0}}$ as $R\to\infty$.
\hl{However, our methods do not appear sufficient to obtain such a statement.
This is because, although the $\omega$-homogeneity of a bijection $f\colon X\to Y$ implies the growth condition \eqref{eq:growth_condition} on its restricted Lipschitz constants, the opposite implication is not formally valid.} Instead, for $g\in\set{f,f^{-1}}$ and $x\neq y\in B(\jl{\mb{0}},R)\cap \dom(g)$, Theorem~\ref{thm:main_result_weak} provides information about the growth of the quotient $\frac{\lnorm{2}{g(y)-g(x)}}{\lnorm{2}{y-x}}$ as $R\to\infty$ in comparison to the growth of the quotient $\frac{R}{\lnorm{2}{y-x}}$.
In this vein, Theorem~\ref{thm:main_result_weak} can be reformulated as follows:
\begin{thm*}[Restatement of Theorem~\ref{thm:main_result_weak}]
Let $d\geq 2$. Then there is $\alpha_0=\alpha_0(d)>0$ and a separated net $X$ in $\R^d$ such that for every bijection $f\colon X\to\Z^d$ \jl{and some\footnote{\jl{Clearly, \eqref{eq:growth_condition} holds true for some $a\in(0,1)$ if and only if it holds for all $a\in(0,1)$.}} $a\in (0,1)$}
the following holds
with $g=f$ or $g=f^{-1}$
\begin{equation}\label{eq:thm_limsup}
	\limsup_{R\to\infty}\sup_{\substack{x\neq y\in B(\jl{\mb{0}},R)\cap \dom(g),\\ \jl{\lnorm{2}{y-x}<aR}}}\frac{\lnorm{2}{g(y)-g(x)}}{\lnorm{2}{y-x}\br*{\log\frac{R}{\lnorm{2}{y-x}}}^{\alpha_0}}=\infty.
\end{equation}
\end{thm*}
If one removes the factor $\br*{\log\frac{R}{\lnorm{2}{y-x}}}^{\alpha_0}$ in the expression \eqref{eq:thm_limsup}, the conclusion of the restatement of Theorem~\ref{thm:main_result_weak} \jl{yields} 
\begin{linenomath}
	\begin{equation*}
	\limsup_{R\to\infty}\max_{g\in\set{f,f^{-1}}}\lip(g|_{B(\jl{\mb{0}},R)\jl{\cap{\dom(g)}}})=\infty
	\end{equation*}
\end{linenomath}
and then it simply asserts the existence of a separated net \jl{$X$} which is bilipschitz non-equivalent to the integer lattice $\Z^d$. This is precisely the result of Burago and Kleiner~\cite[Thm.~1.1]{BK1} and McMullen~\cite{McM}. Since $\br*{\log\frac{R}{\lnorm{2}{y-x}}}^{\alpha_0}\jl{\geq\br*{\log\frac{1}{a}}^{\alpha_0}>0}$ for $x,y\in B(\jl{\mb{0}},R)$ \jl{with $0<\lnorm{2}{y-x}<aR$ and $a\in(0,1)$}, Theorem~\ref{thm:main_result_weak} is a stronger statement than \cite[Thm.~1.1]{BK1} and the corresponding result of \cite{McM}. 

In the context of McMullen's and Burago and Kleiner's work on separated nets, the restatement above can be compared to a consequence of McMullen's result \cite[Thm.~5.1]{McM}: \jl{n}amely, that any separated net $X\subseteq \R^{d}$ admits $\beta\in(0,1)$ and a bijection $f\colon X\to\Z^{d}$ satisfying
\begin{linenomath}
\begin{equation*}
\limsup_{R\to\infty}\sup_{x\neq y\in X\cap \dom(g)}\frac{\lnorm{2}{g(y)-g(x)}}{\lnorm{2}{y-x}}\cdot \br*{\frac{\lnorm{2}{y-x}}{R}}^\beta<\infty
\end{equation*}
\end{linenomath}
for both $g=f$ and $g=f^{-1}$.

To verify this, take a homogeneous $\alpha$-bi-Hölder bijection $f\colon X\to \Z^{d}$ and apply \eqref{eq:growth_condition} to deduce the inequalities above with $\beta=1-\alpha$.

\subsection*{\hl{Connection to the notion of displacement equivalence.}}
\addcontentsline{toc}{subsection}{Connection to the notion of displacement equivalence.}
Theorem~\ref{thm:main_result_weak} may be of interest for the study of bounded displacement equivalence of separated nets; two separated nets $X,Y\subset\R^d$ are \emph{bounded displacement} equivalent if there is a bijection $f\colon X\to Y$ such that the quantity
	\begin{linenomath}	
	\begin{equation*}
		\sup_{x\in X}\lnorm{2}{f(x)-x}
	\end{equation*}
	\end{linenomath}
	is finite. Equivalence of nets under mappings with bounded displacement was studied in the work of Laczkovich~\cite{laczkovich92}, who characterised separated nets $X$ bounded displacement equivalent to $\Z^d$ in terms of the density of $X$ in bounded subsets of $\R^d$. In several subsequent works, such as \cite{Solomon2011, Aliste, Solomon2014} or \cite{HKW2014}, researchers were mainly interested in determining whether separated nets from some natural class are bounded displacement equivalent to $\Z^d$.
	On the other hand, it is easy to see that bounded displacement equivalence is a \emph{stronger} notion than bilip\-schitz equivalence. Therefore, a separated net in $\R^d$ bilipschitz non-equivalent to $\Z^d$ (such as the ones provided by \cite{BK1}) is, in particular, non-equivalent to the integer lattice in the sense of bounded displacement.
	In this direction, however, Theorem~\ref{thm:main_result_weak} provides us with stronger information:
	\begin{prop}\label{prop:first_disp}
	For any separated net $X\subseteq \R^{d}$ satisfying the assertions of Theorem~\ref{thm:main_result_weak} and any bijection $f\colon X\to\Z^{d}$ we have that 
	\begin{linenomath}	
	\begin{equation}\label{eq:displacement_grows_log}
		\limsup_{R\to\infty}\sup_{x\in B(\mb{0},R)\cap X}\frac{\lnorm{2}{f(x)-x}}{\left(\log R\right)^{\alpha_{0}}}=\infty.
	\end{equation}
	\end{linenomath}
\end{prop}
	In other words, any bijection $f\colon X\to\Z^{d}$ displaces points inside the ball $B(\mb{0},R)$ by much more than $(\log R)^{\alpha_{0}}$ for arbitrarily large $R$. 
	
	It is easy to find separated nets in $\R^{d}$ for which a stronger condition than \eqref{eq:displacement_grows_log} holds. Indeed this is true for any separated net not having natural density one. Given a separated net $X\subseteq \R^{d}$, its
	\emph{natural density} is defined as the limit
	\begin{equation}\label{eq:natural_density_1}
	\lim_{R\to\infty}\frac{\abs{X\cap \cl{B}(\mb{0},R)}}{\leb(\cl{B}(\mb{0}, R))}
	\end{equation}
	provided that the limit exists. By an easy comparison of volumes\footnote{\jl{Say, $\lim_{k\to\infty}\abs{X\cap\cl{B}(\mb{0},R_k)}/\abs{\Z^d\cap\cl{B}(\mb{0},R_k)}=:\lambda>1$ for some $\br*{{R_k}}_{k\in\N}, R_k\to\infty$.
	Then there is $c:=c(d,\lambda)>1$ such that $\abs{X\cap\cl{B}(\mb{0},R_k)}>\abs{\Z^d\cap\cl{B}(\mb{0},cR_k)}$ for all $k$ large enough. Consequently, every bijection $X\to\Z^d$ must map some point from $X\cap\cl{B}(\mb{0},R_k)$ outside $\cl{B}(\mb{0},cR_k)$, and hence, displace it by at least $(c-1)R_k$, for all $k$ large enough.
	}}, one can see that if the natural density of $X$ is not defined or does not equal to one, then
	\begin{linenomath}	
		\begin{equation*}
		\limsup_{R\to\infty}\sup_{x\in B(\mb{0},R)\cap X}\frac{\lnorm{2}{f(x)-x}}{R}>0
		\end{equation*}
	\end{linenomath}
	for any bijection $f\colon X\to \Z^{d}$. This is a stronger property than \eqref{eq:displacement_grows_log}.
	
	Therefore, Proposition~\ref{prop:first_disp} is interesting only if there are separated nets $X\subseteq \R^{d}$ satisfying the conclusion of Theorem~\ref{thm:main_result_weak} which have natural density one (i.e., the natural density of $\Z^d$). We verify that this is indeed the case: 
	
	\begin{prop}\label{p:displacement}
	For every $d\geq 2$ there is a separated net $X\subseteq \R^{d}$ satisfying the assertions of Theorem~\ref{thm:main_result_weak} with natural density one.
	\end{prop}
	We show \jl{using} Lemma~\ref{lemma:displacement} that the separated nets arising in the proof of Theorem~\ref{thm:main_result_weak} can always be modified so that they additionally have well-defined natural density \eqref{eq:natural_density_1}. Proposition~\ref{p:displacement} then follows by observing that \jl{$\omega$-regularity} of separated nets is invariant under scaling; thus, we can scale any separated net with well-defined natural density to a separated net with natural density one \jl{without affecting whether it is $\omega$-regular or not. Details are included at the end of Subsection~\ref{subs:displacement}}.

\subsection*{\hl{Parallel results in the continuous setting.}}
\addcontentsline{toc}{subsection}{Parallel results in the continuous setting.}
Our approach to proving Theorems~\ref{thm:main_result_weak} follows the strategy established by McMullen~\cite{McM} and Burago, Kleiner~\cite{BK1}, where existence of bilipschitz non-equivalent nets is established via the construction of non-realisable density\footnote{By a \emph{density} we mean a non-negative integrable function. Sometimes such functions are called \emph{weights} in the literature.} functions.
McMullen~\cite{McM} and Burago, Kleiner~\cite{BK1} constructed examples of bounded measurable functions $\rho\colon [0,1]^d\to(0,\infty)$ with $\inf\rho>0$ that do not admit any bilipschitz solution $f\colon [0,1]^d\to\R^d$ to the \emph{pushforward equation}
\begin{equation}\label{eq:pushforward}
	f_{\sharp}\rho\leb=\rest{\leb}{f([0,1]^{d})}.
\end{equation}
In fact, Burago and Kleiner~\cite{BK1} produced $\rho$ that is, in addition, continuous. These examples answered another long-standing open question, reportedly posed by Moser and Reimann in the 1960's (see \cite[p.1]{BK2}), and fall into a larger area of research in which the solvability of \eqref{eq:pushforward} is studied under various regularity assumptions on $\rho$ and/or imposing restrictions on the solution $f$; see, e.g., \cite{Moser, Reimann, DMo, Ye, RY, AlgorithmicRY, CDK09}. Accordingly, the present work also contributes to this setting; more specifically to the question of solvability of \eqref{eq:pushforward} for $f$ of a prescribed regularity.

Rivi\'ere and Ye~\cite{RY} prove that \eqref{eq:pushforward} admits a Hölder solution $f$ whenever $\rho\in L^{\infty}([0,1]^{d})$. If $\rho$ is in addition continuous, such a solution $f$ may be found in the intersection of all bi-Hölder classes. Given that Burago and Kleiner~\cite{BK1} construct continuous $\rho$ for which \eqref{eq:pushforward} has no bilipschitz solutions, this result appears close to optimal. It suggests that the class of bilipschitz mappings may be the largest class of homeomorphisms in which solutions to \eqref{eq:pushforward} do not always exist for continuous $\rho$.
However, the aforementioned results leave open the question of solvability of \eqref{eq:pushforward} inside classes of homeomorphisms $f$ lying in between bilipschitz and bi-Hölder. This provides strong motivation for examining moduli of continuity $\omega$ lying asymptotically in between the Lipschitz and Hölder moduli, such as $\omega$ of the form of Theorem~\ref{thm:main_result_weak}. For such $\omega$, we show that the corresponding class of homeomorphisms, which is larger than the bilipschitz class, may fail to provide solutions to \eqref{eq:pushforward} for continuous $\rho$. Thus, we narrow the gap left open by the results of Rivi\'ere and Ye and Burago and Kleiner.

To state this result formally, we need to introduce additional definitions.
We will use the notation $f\colon A\hookrightarrow B$ to signify an injective mapping $A\to B$. For such a mapping we write $f^{-1}$ for the inverse mapping $f^{-1}\colon f(A)\to A$.
\begin{define}\label{d:omega_map}
Given a strictly increasing function $\omega\colon (0,a)\hookrightarrow (0,\infty)$
with $\lim_{t\to 0}\omega(t)=0$, we call a mapping $f\colon A\subseteq\R^{d}\to \R^{k}$ an \emph{$\omega$-mapping} or \emph{$\omega$-continuous} if there is a constant $K>0$ such that
\begin{linenomath}
\begin{equation*}
	x,y\in A\jl{, \enorm{y-x}<a} \qquad\Longrightarrow \qquad \lnorm{2}{f(y)-f(x)}\leq K\omega(\lnorm{2}{y-x}).
\end{equation*}
\end{linenomath}
\end{define}

We call a mapping $f\colon A\subseteq \R^{d}\hookrightarrow \R^{k}$ a \emph{bi-$\omega$-mapping} if both $f$ and $f^{-1}$ are $\omega$-continuous.

\begin{thm}\label{thm:non_rlz_dsty_weak}
Let $d\geq 2$. Then there is $\alpha_0=\alpha_0(d)>0$ with the following property:	
	\jl{l}et $\omega\colon (0,a)\hookrightarrow(0,\infty)$ have the form 
	\begin{linenomath}	
	\begin{equation*}
	\omega(t)=t\left(\log\frac{1}{t}\right)^{\alpha_0}.
	\end{equation*}
	\end{linenomath}	
	Then there is a continuous function $\rho\colon [0,1]^{d}\to (0,\infty)$ for which the pushforward equation \eqref{eq:pushforward}
	admits no bi-$\omega$ solution $f\colon [0,1]^{d}\hookrightarrow \R^{d}$.
\end{thm}

We show that Theorem~\ref{thm:non_rlz_dsty_weak} implies Theorem~\ref{thm:main_result_weak} in Section~\ref{s:main}, Lemma~\ref{lemma:discrete_to_cts}, following the approach used by Burago and Kleiner~\cite{BK1}.

In what follows, we will call a function $\rho\hl{\colon[0,1]^{d}\to (0,\infty)}$ which admits a bi-$\omega$ solution $f$ to \eqref{eq:pushforward} \emph{bi-$\omega$ realisable}. In the particular case of $\omega(t)=t$, we call a bi-$\omega$ realisable function \emph{bilipschitz realisable}.
In \cite{DKK2018} the authors and Kopecká strengthen the bilipschitz non-realisability results of Burago and Kleiner~\cite{BK1} and McMullen~\cite{McM} in a direction different to that of Theorem~\ref{thm:non_rlz_dsty_weak}.
They show that there are continuous densities on $[0,1]^d$ for which the pushforward equation~\eqref{eq:pushforward} admits no \emph{Lipschitz}
solution $f\colon [0,1]^d\to\R^d$.
In fact, \cite{DKK2018} establishes that the set of those continuous functions inside the space $C([0,1]^d)$ that admit a Lipschitz solution $f$ to the pushforward equation~\eqref{eq:pushforward} is \emph{$\sigma$-porous}\footnote{The definition of porosity and $\sigma$-porosity appears in Section~\ref{s:def}. For the time being, it suffices to know that every $\sigma$-porous set is meagre \jl{(of first Baire category), and thus, in a complete metric space, cannot contain any non-empty open set}. \jl{From this it follows that Theorem~\ref{thm:non_rlz_dsty} is stronger than Theorem~\ref{thm:non_rlz_dsty_weak}.}}.
In this sense, almost all continuous functions are not Lipschitz realisable, and thus, also not bilipschitz realisable. The same paper \cite{DKK2018} also shows that the set of $L^\infty([0,1]^d)$ densities that are bilipschitz realisable in the sense of \eqref{eq:pushforward} is $\sigma$-porous. Independently, results in this direction were also obtained by Viera~\cite{Viera_final}.

In this work, we show that Theorem~\ref{thm:non_rlz_dsty_weak} allows for the stronger conclusion as in the aforementioned result of \cite{DKK2018}, establishing that bi-$\omega$ realisable densities for $\omega$ as in the statement of Theorem~\ref{thm:non_rlz_dsty_weak} form a negligible subset of the whole spaces of continuous/integrable densities.
\begin{thm}\label{thm:non_rlz_dsty}
Let $d\geq 2$. Then there is $\alpha_0=\alpha_0(d)>0$ with the following property:	
	\jl{l}et $\omega\colon (0,a)\hookrightarrow(0,\infty)$ have the form 
	\begin{linenomath}	
	\begin{equation*}
	\omega(t)=t\left(\log\frac{1}{t}\right)^{\alpha_0}.
	\end{equation*}
	\end{linenomath}	
	Then the set of functions $\rho\in C([0,1]^{d})$ for which the pushforward equation~\eqref{eq:pushforward}
	admits a bi-$\omega$ solution $f\colon [0,1]^{d}\hookrightarrow \R^{d}$ forms a $\sigma$-porous subset of the space $C([0,1]^d)$ of continuous functions with the supremum norm.
	
	The analogous result is true in the space $L^\infty([0,1]^d)$ as well.
\end{thm}
\begin{remark*}
	For functions $\rho\in C([0,1]^{d})$ attaining negative values the left hand side of the pushforward equation \eqref{eq:pushforward} should be understood as the signed measure
	\begin{linenomath}
	\begin{equation*}
	f_{\sharp}\rho\leb=f_{\sharp}\rho^{+}\leb-f_{\sharp}\rho^{-}\leb.
	\end{equation*}
	\end{linenomath}
	Of course any function $\rho\in C([0,1]^{d})$ attaining negative values cannot admit any injective solution $f$ of the pushforward equation. Therefore, the set of functions $\rho\in C([0,1]^{d})$ referred to in Theorem~\ref{thm:non_rlz_dsty} is contained in the set of positive valued functions in $C([0,1]^{d})$.
\end{remark*}

The proof of Theorem~\ref{thm:non_rlz_dsty} is based on geometric properties of bi-$\omega$-mappings investigated in Section~\ref{section:geometric}. This part of our work is an extensive refinement of the similar investigations of properties of bilipschitz mappings in \cite{DKK2018}, which in turn, were extracted from the work of Burago and Kleiner~\cite{BK1}.

\section{Preliminaries and Notation.}\label{s:def}
We write $B(x,r)$ for the open \hl{Euclidean} ball of radius $r$ centred at $x$; the corresponding closed ball is denoted by $\cl{B}(x,r)$. Moreover, if $B=B(x,r)$, then by $cB$ we mean $B(x,cr)$.
We extend this notation to tubular neighbourhoods of sets in a natural way. We write $\cl{A}, \intr{A}$ and $\partial A$ for the closure, interior and boundary of $A$, respectively.
The expression $\diam(A)$ stands for the diameter of the set $A$. 
Let $k\in\N$; we denote by $[k]$ the set $\set{1,\ldots, k}$.
We write $I^d$ for the unit cube $[0,1]^d$ and $\mb{0}$ for the origin in $\R^d$. We use the symbol $:=$ to signify a definition by equality.

\jl{We use the standard asymptotic notation $O, \Omega$ and $\Theta$. Formally, for two positive functions $\phi, \psi$ defined on $(0,a)$ for some $a>0$ we say that $\psi\in O(\phi)$ if $\limsup_{x\to 0} \psi(x)/\phi(x)<\infty$. Moreover, $\psi\in\Omega(\phi)$ if $\phi\in O(\psi)$ and $\Theta(\phi):=O(\phi)\cap\Omega(\phi)$. Occasionally we will use the same notation for sequences of real numbers $(x_i)_{i\in \N}$ instead of functions, in which case the classes are defined analogously, but according to the asymptotics as $i\to \infty$.} 

Throughout the article we use expressions of the type $\phi(x,y,z)$ to denote a parameter $\phi$ depending only on $x$, $y$ and $z$, but in some cases these dependencies are suppressed after the first appearance.

Moreover, we use the notation $\poly{\hl{t}}$ to denote a function of type $\Theta(\hl{t}^\alpha)$, where the particular power $\alpha>0$ is irrelevant, may depend on other objects present and change from occurrence to occurrence.
In order to emphasise on which parameters the function of the form $\poly{\hl{t}}$ may depend and of which it is independent, we sometimes use the extended notation $\hl{\pl{x,y}{z}{t}$}. This denotes a function of $t$ in $\Theta(t^\alpha)$, where the value of $\alpha$ \hl{can depend only on $x$ and $y$} \hl{and} the implicit multiplicative constant may depend on \hl{$x$, $y$ and $z$} but not on any other parameters present. \hl{The lists of the parameters in the super- and subscript can also have different sizes than in the previous example or be empty.}

Given a mapping $f$ defined on a set $A$ and $B\subset A$, we denote by $\rest{f}{B}$ the restriction of $f$ to $B$. We use the same notation for restrictions of measures as well. We write $\leb_d$ for the $d$-dimensional Lebesgue measure. If the dimension is understood, we usually drop the subscript and write just $\leb$.
Given an integrable function $\rho\colon A\subseteq\R^d\to [0, \infty)$, we write $\rho\leb$ for the measure defined via the formula
\begin{linenomath}
$$
\rho\leb(S):=\niceint{S}{\rho}{\leb}, \qquad S\subseteq A.
$$
\end{linenomath}
We refer to such a function $\rho\colon A\to [0,\infty)$ as a \emph{density}.
Given a measurable mapping $f\colon A\to \R^d$, we define the \hl{\emph{pushforward}} of a measure $\nu$ as $f_\sharp\nu(S):=\nu(f^{-1}(S))$\hl{, where $S\subseteq\R^d$}.

Let $A\subset \R^d$. We write $C(A)$ for the space of continuous functions $A\to\R$ with the supremum norm. 
We denote by $L^\infty(A)$ the space of essentially bounded functions $A\to\R$ with the $L^\infty$-norm.

Let $(X,d)$ be a metric space. We call a set $P\subseteq X$ \emph{porous} if for every $x\in X$ there are $\varepsilon_{0}>0$ and $\alpha\in (0,1)$ such that for every $\varepsilon\in(0,\varepsilon_{0})$ there exists $y\in X$ satisfying
	$d(y,x)\leq\varepsilon$ and $B(y,\alpha\varepsilon)\cap P=\emptyset$.
A set $E\subseteq X$ is called \emph{$\sigma$-porous} if it may be expressed as a countable union of porous subsets of $X$. For the original definitions and more background on these sets, see \cite{zajicek2005}.

\begin{define}\label{def:sep_net}
Given $A\subset X$ in the metric space $(X,d)$ and a number $r>0$, we say that $A$ is \emph{$r$-separated} if $d(a,a')\geq r$ for every $a\neq a' \in A$. We say that $A$ is an \emph{$r$-net} of $X$ if $d(x,A)\leq r$ for every $x\in X$. If there are $r,s>0$ such that $A$ is \jl{an} $s$-separated $r$-net, we call $A$ a \emph{separated net}.
\end{define}

We write $\mb{e}_{1},\ldots,\mb{e}_{d}$ for the standard basis of $\R^{d}$. For $\lambda>0$ we let $\mathcal{Q}^{d}_{\lambda}$ denote the standard tiling of $\R^{d}$ by cubes of sidelength $\lambda$ and vertices in the set $\lambda\Z^{d}$. We call a family of cubes \emph{tiled} if it is a subfamily of $\mathcal{Q}^{d}_{\lambda}$ for some $\lambda>0$. We say that two cubes $S,S'\in \mathcal{Q}^{d}_{\lambda}$ are $\mb{e}_{1}$-\emph{adjacent} if $S'=S+\lambda\mb{e}_{1}$.

\paragraph{Moduli of continuity.}
\begin{define}\label{def:modulus}
We use the term \emph{modulus of continuity} to refer to a strictly increasing, concave function $\omega\colon (0,a)\hookrightarrow (0,\infty)$
with $\lim_{t\to 0}\omega(t)=0$.
\end{define}
\jl{The exact value of the parameter $a$ in the preceding definition will mostly be immaterial; for instance, Theorems~\ref{thm:main_result_weak} or \ref{thm:non_rlz_dsty} do not refer to $\omega$ described there with a specific value of $a$ in mind, but include all possible choices of the parameter $a$. Thus, to simplify the formulas and improve readability, we introduce a general convention that whenever
any modulus of continuity $\omega$ appears in a logical statement, the statement implicitly applies only to arguments of $\omega$ that are smaller than $a$. As a simple example of use of our convention, we can write the condition in Definition~\ref{d:omega_map} in a shorter form as `$x,y\in A \Longrightarrow \enorm{f(y)-f(x)}\leq K\omega(\enorm{y-x})$'.
Our convention dictates that this condition should be interpreted as void for pairs $x,y\in A$ with $\lnorm{2}{y-x}\geq a$.}

For \jl{a modulus of continuity} $\omega$ and a mapping $f\colon A\subseteq \R^{d}\to\R^{\hl{k}}$ we let
\begin{linenomath}
\begin{equation*}
	\mathfrak{L}_{\omega}(f):=\sup\left\{\frac{\lnorm{2}{f(y)-f(x)}}{\omega(\lnorm{2}{y-x})}\colon x,y\in A,\,0<\lnorm{2}{y-x}< a\right\}.
\end{equation*}
\end{linenomath}
In the case that $f$ is injective, we further define 
\begin{linenomath}
\begin{equation*}
\bilipomeg(f):=\max\set{\lipomeg(f),\lipomeg(f^{-1})}.
\end{equation*}
\end{linenomath}
Note that $f$ is an $\omega$-mapping (see Definition~\ref{d:omega_map}) if and only if $\mathfrak{L}_{\omega}(f)<\infty$. Moreover, in the simplest case of $\omega(t)=t$ the quantity $\mathfrak{L}_{\omega}(f)$ coincides with the Lipschitz constant of $f$; in this case, we use the standard notation $\lip(f)$ and $\bilip(f)$ instead of $\mathfrak{L}_{\omega}(f)$ and $\bilipomeg(f)$, respectively.

\begin{remark*}
	We defined the notion of a homogeneous $\omega$-mapping in Definition~\ref{d:hom_omega_map} only for mappings of separated nets because we will not use it for any other domains, but it extends naturally to mappings of arbitrary subsets of $\R^{d}$ to $\R^{k}$. However, we caution the reader that the terms~$\omega$-mapping and homogeneous $\omega$-mapping can then be misleading.  Although the terminology might suggest that the notion of homogeneous $\omega$-mapping is stronger than that of an $\omega$-mapping, the two notions are actually incomparable.
\end{remark*}

We will restrict our attention to moduli of continuity with various special properties. However, we show that this class of moduli is still diverse (see Lemma~\ref{lemma:tlogp}).

For $0< a\leq 1$ we call a function $\phi\colon (0,a)\to(0,\infty)$ \emph{submultiplicative} if 
\begin{linenomath}
\begin{equation*}
\phi(st)\leq \phi(s)\phi(t)
\end{equation*}
\end{linenomath}
for all $s,t\in (0,a)$.
Similarly, we call $\phi$ as above \emph{supermultiplicative} if
\[
\phi(st)\geq \phi(s)\phi(t)
\]
for all $s,t\in (0,a)$.
Observe that for a strictly increasing $\phi$ the function $\phi$ is submultiplicative if and only of $\phi^{-1}$ is supermultiplicative.
\begin{define}\label{def:M}
Let $\mc{M}$ denote the set of strictly increasing, concave
and submultiplicative functions
\begin{linenomath}
\begin{equation*}
\omega\colon(0,a)\hookrightarrow(0,\infty)
\end{equation*}
\end{linenomath}
with $a\leq \hl{\frac{1}{2}}$, $\lim_{t\to 0}\omega(t)=0$ and $\omega(t)\geq t$ for all $t\in(0,a)$.
Given $\omega \in \mc{M}$, we will denote by $a_\omega$ the upper end of the domain of $\omega$.
\jl{Occasionally, we use $\omega(a_\omega)$ as a convenient abbreviation for $sup_{t\in(0,a_\omega)} \omega(t)$.} 
\end{define}
Note that whenever $\omega\in\mc{M}$, then also $\rest{\omega}{(0,b)}$ belongs to $\mc{M}$ for every $b\in(0,a_\omega)$. However, the classes of $\omega$- and $\omega|_{(0,b)}$-mappings coincide on \jl{m}any  domain\jl{s} $A\subseteq \R^{d}$; in the case where $A$ is convex, this follows easily from Definition~\ref{d:omega_map} by the triangle inequality and the concavity of $\omega$. For \jl{more} general \jl{domains} $A\subseteq \R^{d}$, the \jl{equality} 
of the classes follows from the convex case \jl{whenever} any $\omega$-continuous mapping $A\to\R^k$ can be extended to an $\omega$-continuous mapping on \jl{$\operatorname{conv}(A)$, the convex hull of $A$}. \jl{The existence of the extension in the only case in which we will need it is established in the next lemma and follows easily from \cite[Thm.~1.12]{benyamini1998geometric}.}
\begin{lemma}\label{lemma:ex}
	Let $\omega\colon (0,a)\to (0,\infty)$ be a	
	modulus of continuity. Let $\Gamma\subseteq \R^{d}$ be a bounded set such that $\Gamma$ is an $\frac{a}{4}$-net of its convex hull $\operatorname{conv}(\Gamma)$ and let $f\colon \Gamma\to \R^{d}$ be an $\omega$-mapping. Then there exists an $\omega$-mapping $F\colon \R^{d}\to \R^{d}$ such that $F|_{\Gamma}=f$ and
	\begin{linenomath}
	\begin{equation*}
		\lipomeg(F)\leq \left(\frac{4\diam \Gamma}{a}+1\right)\lipomeg(f).
	\end{equation*}
	\end{linenomath}
\end{lemma}
\begin{proof}
We first extend $\omega$ continuously to $a$ and
define a concave function $\cl{\omega}\colon (0,\infty)\to (0,\infty)$ by
	\begin{linenomath}
	\begin{equation*}
		\cl{\omega}(t)=\begin{cases}
			\omega(t) & \text{if }t\in (0,a),\\
			\omega(a) & \text{if }t\geq a.
		\end{cases}
	\end{equation*}
	\end{linenomath}
We will show that
\begin{equation}\label{eq:thing_to_prove}
\lnorm{2}{f(y)-f(x)}\leq \left(\frac{4\diam \Gamma}{a}+1\right)\lipomeg(f)\cdot\cl{\omega}\left(\lnorm{2}{y-x}\right)\quad \text{for every $x,y\in\Gamma$.}
\end{equation}
Once \eqref{eq:thing_to_prove} is established, the proof of the lemma is completed simply by applying \cite[Thm.~1.12]{benyamini1998geometric}. So let us prove \eqref{eq:thing_to_prove}. Let $x,y\in \Gamma$, assuming, as we may, that $\lnorm{2}{y-x}\geq a$, and set $z_{0}:=x$. If $i\geq 1$ and $z_{i-1}$ is already chosen, we draw a (possibly degenerate) line from $z_{i-1}$ to $y$. If this line has length less than $a$, we set $y=z_{i}$. Otherwise we let $z_{i}'$ denote the point on this line at distance $\frac{a}{2}$ from $z_{i-1}$ and choose $z_{i}\in\cl{B}\left(z_{i}',\frac{a}{4}\right)\cap \Gamma$ arbitrarily. 
Inductively, we verify that
\begin{linenomath}
\begin{equation*}
\lnorm{2}{z_{i}-z_{i-1}}<a \quad\text{and}\quad	0\leq \lnorm{2}{z_{i}-y}\leq \max\set{\lnorm{2}{y-x}-\frac{ia}{4},0}
\end{equation*}
\end{linenomath}
for every $i\in\N$. It follows that there exists $N\leq \frac{4\lnorm{2}{y-x}}{a}+1$ such that $z_{N}=y$. We therefore obtain the estimate
\begin{linenomath}
\begin{equation*}
\frac{\lnorm{2}{f(y)-f(x)}}{\cl{\omega}\left(\lnorm{2}{y-x}\right)}\leq \frac{N\lipomeg(f)\omega(a)}{\omega(a)}\leq \left(\frac{4\diam \Gamma}{a}+1\right)\lipomeg(f).\qedhere
\end{equation*}
\end{linenomath}
\end{proof}
\jl{Similarly, also the classes of homogeneous $\omega$- and $\rest{\omega}{(0,b)}$-mappings are the same; but in this case, this follows immediately from Definition~\ref{d:hom_omega_map}. The constant $a$ from that definition can always be taken to be equal to $a_\omega$. These facts also support the convention that we introduced below Definition~\ref{def:modulus} about arguments of $\omega$ in various conditions.}
Additionally, for every $L\geq 1$ and $\omega\in \mc{M}$ the modulus $L\omega$ belongs to $\mc{M}$ as well. This means that $f$ being an $\omega$-mapping with $\lipomeg(f)\leq L$ is equivalent to $f$ being an $L\omega$-mapping with $\lipgen{L\omega}\leq 1$.

It is clear that all Lipschitz and Hölder moduli, i.e., all functions $t\mapsto t^{\alpha}$ with $\alpha\in(0,1]$, belong to the class $\mc{M}$. Our aim is now to show that the class $\mc{M}$ is even larger and contains a diverse spectrum of moduli lying inbetween H\"older and Lipschitz.
Indeed we will show that the functions $t\mapsto t\left(\log\frac{1}{t}\right)^{\alpha}$ for \hl{$\alpha\in(0,\infty)$} belong to $\mc{M}$. The class of $\omega$-mappings $f\colon \R^{d}\to\R^{\hl{k}}$ for such $\omega$ is then larger than the class of Lipschitz mappings, but smaller than that of H\"older.

\begin{lemma}\label{lemma:tlogp}
	For each \hl{$\gamma\in(0,\infty)$} there exists $a\in(0,1)$ so that the function
	\begin{linenomath}
	\begin{equation*}
	\phi_{\gamma}\colon (0,a)\hookrightarrow \R,\qquad t\mapsto t\left(\log\frac{1}{t}\right)^{\gamma}
	\end{equation*}
	\end{linenomath}
	belongs to $\mc{M}$.
\end{lemma}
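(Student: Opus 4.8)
The plan is to fix $a\in(0,1)$ small enough that $\log(1/a)\ge\alpha+2$ and then verify, one at a time, the defining properties of $\mathcal{M}$ for the restriction of $\phi_{\alpha}$ to $(0,a)$. Throughout it is convenient to substitute $u=\log(1/t)=-\log t$; as $t$ ranges over $(0,a)$ the quantity $u$ ranges over $(\alpha+2,\infty)$, and $\phi_{\alpha}(t)=tu^{\alpha}$.

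The properties requiring no real idea come first. Positivity of $\phi_{\alpha}$ on $(0,a)$ (so that it maps into $(0,\infty)$) is clear since $u>0$ there; the inequality $\phi_{\alpha}(t)\ge t$ holds because $u>1$ forces $u^{\alpha}>1$; and $\lim_{t\to0}\phi_{\alpha}(t)=0$ is the standard limit $t(\log(1/t))^{\alpha}\to0$. For strict monotonicity and concavity I would differentiate $\phi_{\alpha}(t)=tu^{\alpha}$: a routine computation gives $\phi_{\alpha}'(t)=u^{\alpha-1}(u-\alpha)$ and $\phi_{\alpha}''(t)=-\tfrac{\alpha}{t}\,u^{\alpha-2}(u-\alpha+1)$. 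On $(0,a)$ we have $u>\alpha+2$, hence $u-\alpha>0$ and $u-\alpha+1>0$, so $\phi_{\alpha}'>0$ and $\phi_{\alpha}''<0$ throughout $(0,a)$, giving strict monotonicity and concavity.

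The one property needing an observation is submultiplicativity. Given $s,t\in(0,a)$ we have $st\in(0,a^{2})\subseteq(0,a)$ and $\log(1/(st))=\log(1/s)+\log(1/t)$, so writing $x=\log(1/s)$ and $y=\log(1/t)$ the desired inequality $\phi_{\alpha}(st)\le\phi_{\alpha}(s)\phi_{\alpha}(t)$ becomes, after cancelling the common factor $st$, the inequality $(x+y)^{\alpha}\le(xy)^{\alpha}$; since $\alpha>0$ this is equivalent to $x+y\le xy$, i.e.\ to $(x-1)(y-1)\ge1$. But $s,t<a$ gives $x,y>\alpha+2\ge2$, whence $(x-1)(y-1)>1$, and we are done. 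This last step is the crux — though it is elementary — and it is precisely the constraint $\log(1/a)\ge2$ arising here, together with $\log(1/a)>\alpha$ needed for monotonicity, that dictates the choice of $a$ (the concavity bound and $\phi_{\alpha}(t)\ge t$ are then automatic). Having checked all these properties, we conclude $\phi_{\alpha}\in\mathcal{M}$.
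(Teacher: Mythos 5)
Your proof is correct and follows essentially the same route as the paper's: shrink $a$ so that $\log(1/t)$ exceeds a threshold depending on $\alpha$, verify monotonicity and concavity by elementary calculus, and reduce submultiplicativity to $\log(1/s)+\log(1/t)\le\log(1/s)\log(1/t)$, which holds once both logarithms are at least $2$. The only difference is that you spell out the derivative computations that the paper leaves as "easily verified".
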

\begin{proof}
	Fix \hl{$\gamma\in(0,\infty)$}. We determine sufficient conditions on $a\in(0,1)$. First we require that $a<e^{-1}$ so that $\phi_{\gamma}(t)\geq t$ for all $t\in (0,a)$.  
	Choosing \hl{$a\in(0,e^{-\gamma})$} so that $\log(1/t)\geq \gamma$ for every $t\in (0,a)$, one can easily verify that $\phi_\gamma$ is concave and strictly increasing. It only remains to check that $\phi_{\gamma}$ is submultiplicative. We impose the \hl{additional} condition $a\leq \frac{1}{e^{2}}$. Then for $s,t\in(0,a)$ we have $\log\left(\frac{1}{st}\right)\leq \log\left(\frac{1}{s}\right)\log\left(\frac{1}{t}\right)$ and therefore
	\begin{linenomath}
	\begin{align*}
	\phi_{\gamma}(st)=st\left(\log\frac{1}{st}\right)^{\gamma}\leq s\left(\log\frac{1}{s}\right)^{\gamma}t\left(\log\frac{1}{t}\right)^{\gamma}=\phi_{\gamma}(s)\phi_{\gamma}(t).
	\end{align*}
	\end{linenomath}
\end{proof}

We will also briefly use the \emph{Hausdorff dimension} of a set $A$, which we denote by $\dim_H(A)$ (for the definition and its basic properties, see, e.g., \cite{Mattila}). The following \hl{classical} lemma is an easy consequence of the definition of the Hausdorff dimension:
\begin{lemma}\label{lem:Hausdorff}
Let $f\colon \R^d\to\R^{\hl{k}}$ be a continuous mapping that is $\alpha$-Hölder continuous for an \hl{$\alpha\in(0,1)$}. Then for every $A\subseteq\R^d$ we have $\dim_H(f(A))\leq\frac{1}{\alpha}\dim_H(A)$.
\end{lemma}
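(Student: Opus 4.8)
The plan is to run the classical argument relating Hölder maps to Hausdorff measures, using that $\dim_H(B)=\inf\set{s\geq 0:\haus^s(B)=0}$ for any $B\subseteq\R^n$. I would fix an arbitrary $s>\dim_H(A)$ and aim to show $\haus^{s/\alpha}(f(A))=0$, since this yields $\dim_H(f(A))\leq s/\alpha$, and then letting $s$ decrease to $\dim_H(A)$ gives the lemma. Let $K>0$ and $a>0$ be constants witnessing $\alpha$-Hölder continuity of $f$, so that $\norm{f(x)-f(y)}\leq K\norm{x-y}^{\alpha}$ whenever $\norm{x-y}<a$.

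Since $s>\dim_H(A)$ we have $\haus^s(A)=0$, and hence the size-$\delta$ Hausdorff pre-measure $\haus^s_\delta(A)$ vanishes for every $\delta>0$. Thus, given $\varepsilon>0$ and $\delta\in(0,a)$, there is a countable cover $A\subseteq\bigcup_i U_i$ with $\diam U_i<\delta$ and $\sum_i(\diam U_i)^s<\varepsilon$. The sets $f(U_i)$ then cover $f(A)$, and because $\diam U_i<\delta<a$ the Hölder bound gives $\diam f(U_i)\leq K(\diam U_i)^{\alpha}$, whence
\begin{linenomath}
\begin{equation*}
\sum_i (\diam f(U_i))^{s/\alpha}\leq K^{s/\alpha}\sum_i (\diam U_i)^{s}< K^{s/\alpha}\varepsilon.
\end{equation*}
\end{linenomath}
Each $f(U_i)$ has diameter at most $K\delta^{\alpha}$, and $\varepsilon$ is arbitrary, so $\haus^{s/\alpha}_{K\delta^{\alpha}}(f(A))=0$; letting $\delta\to 0$ gives $\haus^{s/\alpha}(f(A))=0$, as required.

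There is essentially no real obstacle here; the only point meriting care is that the Hölder estimate is assumed only for pairs of points at distance less than $a$. This is harmless, since Hausdorff dimension is computed from arbitrarily fine covers, so one may insist from the outset that every set in the covering family has diameter below $a$ (equivalently, work with $\haus^s_\delta$ for $\delta<a$), making the Hölder bound applicable to every set in the cover.
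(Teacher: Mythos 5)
Your argument is correct and is precisely the classical covering argument that the paper implicitly invokes when it calls the lemma ``an easy consequence of the definition of the Hausdorff dimension'' (the paper gives no proof of its own). Your remark about restricting to covers of diameter below the threshold $a$ correctly handles the only subtlety introduced by the paper's convention that the Hölder bound is only assumed for nearby pairs of points.
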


We shall use the following (standard) corollary of the lemma above to bound above the Lebesgue measure of neighbourhoods of $f$-images of sets under a bi-$\omega$-mapping $f$.
\begin{cor}\label{cor:Hausdorff}
Let $f\colon I^d\to\R^d$ be a homeomorphism that is $\alpha$-Hölder continuous for some \hl{$\alpha\in\left(\frac{d-1}{d},1\right)$}. Then
\begin{linenomath}
$$
\lim_{\varepsilon\to 0}\leb\br*{B(\partial f(I^d), \varepsilon)}= 0.
$$
\end{linenomath}
\end{cor}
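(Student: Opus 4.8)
The plan is to combine Lemma~\ref{lem:Hausdorff} with invariance of domain. We may assume that the Hölder exponent $\alpha$ lies in $\left(\tfrac{d-1}{d},1\right)$: if $f$ is $\alpha$-Hölder with $\alpha\geq 1$ it is also $\alpha'$-Hölder on the bounded set $I^d$ for any smaller $\alpha'$, and such an $\alpha'\in\left(\tfrac{d-1}{d},1\right)$ exists. First I would note that $\partial I^d$ is a finite union of $(d-1)$-dimensional faces, each isometric to $I^{d-1}$, so $\dim_H(\partial I^d)=d-1$. Next, since $f$ is a homeomorphism of $I^d$ onto the compact (hence closed) set $f(I^d)\subseteq\R^d$, invariance of domain yields that $f$ maps $\intr I^d$ homeomorphically onto an \emph{open} subset of $\R^d$ contained in $f(I^d)$; together with closedness of $f(I^d)$ this forces $\intr f(I^d)=f(\intr I^d)$, and therefore
\[
\partial f(I^d)=f(I^d)\setminus\intr f(I^d)=f\br*{\partial I^d}.
\]

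Applying Lemma~\ref{lem:Hausdorff} to the $\alpha$-Hölder map $f$ and using $\alpha>\tfrac{d-1}{d}$ gives
\[
\dim_H\br*{\partial f(I^d)}=\dim_H\br*{f(\partial I^d)}\leq\frac{1}{\alpha}\dim_H(\partial I^d)=\frac{d-1}{\alpha}<d .
\]
A set of Hausdorff dimension strictly less than $d$ has vanishing $d$-dimensional Hausdorff measure, hence vanishing $d$-dimensional Lebesgue measure, so $\leb\br*{\partial f(I^d)}=0$.

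Finally I would pass from the set to its neighbourhoods via continuity of the measure $\leb$ from above. The set $\partial f(I^d)$ is closed and bounded (being a closed subset of the compact set $f(I^d)$), so $\leb\br*{B(\partial f(I^d),1)}<\infty$ and $\bigcap_{\varepsilon>0}B(\partial f(I^d),\varepsilon)=\partial f(I^d)$. Hence
\[
\lim_{\varepsilon\to 0}\leb\br*{B(\partial f(I^d),\varepsilon)}=\leb\br*{\partial f(I^d)}=0,
\]
which is the assertion. The only step that is not pure bookkeeping is the identification $\partial f(I^d)=f(\partial I^d)$, which relies on invariance of domain; I expect this to be the point meriting a sentence of justification, while the Hausdorff-dimension estimate and the measure-continuity argument are routine.
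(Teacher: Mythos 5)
Your proof is correct. The first half coincides with the paper's: both establish $\dim_H(\partial f(I^d))<d$ via Lemma~\ref{lem:Hausdorff} applied to $f(\partial I^d)$ (the paper simply asserts $f(\partial I^d)=\partial f(I^d)$, whereas you justify it). The second half takes a genuinely different route. The paper stays inside the definition of Hausdorff dimension: it extracts, for each $\delta>0$, a finite cover of the compact set $\partial f(I^d)$ by balls with $\sum\diam(B_i)^d\leq\delta$, observes that the doubled balls cover the $r$-neighbourhood for $r=\min_i\diam(B_i)$, and so bounds $\leb(B(\partial f(I^d),r))\leq\pi\delta$ directly. You instead convert the dimension bound into $\leb(\partial f(I^d))=0$ and then invoke continuity from above of $\leb$ on the nested open neighbourhoods of a closed bounded set. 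Both arguments are sound; yours is softer and more standard measure theory, while the paper's covering argument yields the same conclusion with explicit quantitative control (a bound on the measure of a concrete neighbourhood in terms of $\delta$), which it does not actually need here. One small caveat on your topological step: invariance of domain immediately gives $f(\intr I^d)\subseteq\intr f(I^d)$, hence $\partial f(I^d)\subseteq f(\partial I^d)$, and this inclusion is all your dimension estimate requires; the reverse inclusion, i.e.\ the full equality $\intr f(I^d)=f(\intr I^d)$ that you claim is ``forced'', needs an extra ingredient (e.g.\ that $f(\partial I^d)$ has empty interior, which here follows from its Hausdorff dimension being less than $d$, or a Jordan--Brouwer type argument), so you should either supply that or weaken the equality to the inclusion you actually use.
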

\begin{proof}
Since $f(\partial I^d)=\partial f(I^d)$, Lemma~\ref{lem:Hausdorff} implies that $\dim_H(\partial f(I^d))< d$. This, in turn, means that for every $\delta>0$ there is a collection $(B_i)_{i\in\N}$ of balls of radius at most $\delta$ covering $\partial f(I^d)$ such that $\sum_{i=1}^\infty \diam(B_i)^d\leq \delta$.
Moreover, $\partial f(I^d)$ is compact. Thus, we can assume that there is $k\in\N$ such that $\partial f(I^d)$ is already covered by $B_1, \ldots, B_k$. Let $r:=\min_{i\in[k]}\diam(B_i)$. Then
\begin{linenomath}
$$
\bigcup_{i=1}^k 2B_i\supseteq B(\partial f(I^d), r),
$$
\end{linenomath}
which implies that the Lebesgue measure of the latter set is at most $C\delta$ for an absolute constant $C$.
\end{proof}
	\section{Proof of Main Results.}\label{s:main}

\subsection{\texorpdfstring{\jl{$\omega$-regularity}}{\unichar{"03C9}-regularity}}
In this \hl{sub}section we give a proof of our main results Theorems~\ref{thm:main_result_weak} and~\ref{thm:non_rlz_dsty}, partially based on a geometric statement for bi-$\omega$-mappings $\R^{d}\to\R^{d}$ which will be proved in  Section~\ref{section:geometric}. 
Our first objective is to verify that
Theorem~\ref{thm:main_result_weak} is implied by Theorem~\ref{thm:non_rlz_dsty}. We will need one lemma on uniform convergence to a homeomorphism.
\begin{lemma}\label{lem:close_fun_close_img}
	Let $f\colon I^d\to\R^d$ be a homeomorphism and $g\colon I^d\to\R^d$ be continuous. Then $g(I^d)\Delta f(I^d)\subseteq \cl{B}(\partial f(I^d), \lnorm{\infty}{f-g})$, where the notation $\Delta$ denotes the set difference $E\Delta F:=(E\setminus F)\cup (F\setminus E)$.
\end{lemma}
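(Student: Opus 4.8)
The plan is to prove the inclusion $g(I^d)\Delta f(I^d)\subseteq \cl{B}(\partial f(I^d),\lnorm{\infty}{f-g})$ by a direct point-chasing argument, splitting into the two parts of the symmetric difference. Write $\delta:=\lnorm{\infty}{f-g}$. Since $f$ is a homeomorphism of $I^d$ onto its image, invariance of domain tells us that $f(\intr I^d)$ is open and equals $\intr f(I^d)$, while $f(\partial I^d)=\partial f(I^d)$; these facts are what make the boundary sphere the right object to neighbour.

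First I would handle the easier inclusion $g(I^d)\setminus f(I^d)\subseteq \cl{B}(\partial f(I^d),\delta)$. Take $y=g(x)$ with $x\in I^d$ and $y\notin f(I^d)$. Then $\lnorm{2}{y-f(x)}\leq\delta$, and $f(x)\in f(I^d)$. Consider the straight segment from $f(x)$ to $y$: it starts in the compact set $f(I^d)$ and ends outside it, so it must cross the boundary $\partial f(I^d)$ at some point $z$, and $\lnorm{2}{y-z}\leq\lnorm{2}{y-f(x)}\leq\delta$. Hence $y\in\cl B(\partial f(I^d),\delta)$.

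The second inclusion $f(I^d)\setminus g(I^d)\subseteq\cl B(\partial f(I^d),\delta)$ is the one requiring a little more care, and I expect this to be the main (mild) obstacle, since here we cannot directly produce a point of $g(I^d)$ near the given point. Take $y=f(x)$ with $x\in I^d$ and $y\notin g(I^d)$; suppose for contradiction that $\dist(y,\partial f(I^d))>\delta$, so in particular $y\in\intr f(I^d)$ and $\cl B(y,\delta)\subseteq\intr f(I^d)$. The idea is to apply a degree/Brouwer-type argument to the map $g\circ f^{-1}$, which is defined and continuous on $\cl B(y,\delta)\subseteq f(I^d)$ and satisfies $\lnorm{2}{g(f^{-1}(w))-w}\leq\delta$ for all such $w$; thus $g\circ f^{-1}$ maps $\cl B(y,\delta)$ into $\cl B(y,\delta)$ after recentering... more precisely, the map $w\mapsto w-g(f^{-1}(w))$ has norm $\le\delta$ on the sphere $\partial B(y,\delta)$, so by the standard consequence of Brouwer's theorem (a continuous self-perturbation of the identity on a ball by less than the radius is surjective onto the ball) the image $g(f^{-1}(\cl B(y,\delta)))$ contains $B(y,\delta)\ni y$. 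But $g(f^{-1}(\cl B(y,\delta)))\subseteq g(I^d)$, contradicting $y\notin g(I^d)$. Therefore $\dist(y,\partial f(I^d))\le\delta$, i.e. $y\in\cl B(\partial f(I^d),\delta)$.

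Combining the two inclusions gives the claim. I would remark that the only topological input is invariance of domain together with the elementary surjectivity lemma for small perturbations of the identity on a closed ball; no smoothness of $f$ or $g$ is needed, and the argument is dimension-independent. If one prefers to avoid degree theory entirely, the second inclusion can instead be obtained by a connectedness argument: $g(I^d)$ is compact hence closed, and if some $y\in f(I^d)\setminus g(I^d)$ had $\dist(y,\partial f(I^d))>\delta$ one shows the open ball $B(y,\delta)$ cannot be entirely outside $g(I^d)$ because $g$ and $f$ agree to within $\delta$ and $f$ is onto $f(I^d)$ — but the cleanest writeup is via the perturbation-of-identity lemma as above.
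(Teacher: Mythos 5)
Your proof is correct, and for the substantive half of the lemma it takes a route that differs from the paper's in an instructive way. For $g(I^d)\setminus f(I^d)$ both arguments are the same elementary observation (nearest-point/segment-crossing to $\partial f(I^d)$). For the inclusion $f(I^d)\setminus g(I^d)\subseteq\cl{B}(\partial f(I^d),\delta)$, the paper works globally on $\intr I^d$: it invokes the multiplication theorem to get $\deg(f,\intr I^d,\cdot)=\pm1$, then uses stability of the degree under perturbations smaller than the distance to $f(\partial I^d)$ and the fact that the degree vanishes off the image. You instead conjugate by $f^{-1}$ and localise to the ball $\cl{B}(y,\delta)\subseteq\intr f(I^d)$, reducing everything to the surjectivity of a small perturbation of the identity on a closed ball --- a consequence of Brouwer's fixed point theorem alone. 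This buys you a more elementary toolkit (no multiplication theorem, no need to know the degree of a homeomorphism), at the price of having to justify $\cl{B}(y,\delta)\subseteq\intr f(I^d)$, which you do correctly. Two small imprecisions to fix in a final write-up: the parenthetical lemma you quote is misstated --- a continuous map $h$ on $\cl{B}(y,r)$ with $\norm{h(w)-w}\leq\varepsilon$ has image containing $\cl{B}(y,r-\varepsilon)$, not all of $B(y,r)$; with $r=\varepsilon=\delta$ this still yields $y\in g(f^{-1}(\cl{B}(y,\delta)))$, which is all you need, but your claim that the image contains $B(y,\delta)$ is an overreach. Cleaner is to pick $r$ with $\delta<r<\dist(y,\partial f(I^d))$ and work on $\cl{B}(y,r)$, where the perturbation is strictly smaller than the radius. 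Also note the degenerate case $\delta=0$ (then $f=g$ and the symmetric difference is empty), so the homotopy argument is only ever invoked for $\delta>0$.
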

In the proof of the lemma, we will use the topological degree of Brouwer; for its definition and properties, see~\cite{Deim}, for example. By $\deg(f, U, y)$ we denote the degree of a continuous mapping $f\colon \cl{U}\to \R^d$ at the point $y\in\R^d\setminus f(\partial U)$ with respect to the open, bounded set $U\subset \R^d$.
\begin{proof}[Proof of Lemma~\ref{lem:close_fun_close_img}.]
	It is clear that
	\begin{linenomath}
	$$
	g(I^d)\setminus f(I^d)\subseteq\cl{B}\br*{\partial f(I^d), \lnorm{\infty}{f-g}},
	$$
	\end{linenomath}
	since $g(I^d)\subseteq\cl{B}\br*{f(I^d), \lnorm{\infty}{f-g}}$ and the distance from a point $y\in g(I^d)\setminus f(I^d)$ to the set $f(I^d)$ is realised on the boundary of $f(I^d)$, as the latter is compact.
	
	For the other, less clear inclusion,
	we use the topological degree. The multiplication theorem for the degree (see, e.g.~\cite[Thm.\ 5.1]{Deim}) implies that the degree of a homeomorphism is always $\pm 1$, i.e., $\deg(f,\intr I^d, \cdot)$ is constant equal to $\pm 1$. On the other hand, it is a basic property of the degree that $\deg(g,\intr I^d, y)=\deg(f,\intr I^d, y)$ whenever $\dist(y,\partial f(I^d))>\lnorm{\infty}{f-g}$; see \cite[Thm.\ 3.1(d5)]{Deim}. Another basic property is that the degree of any function with respect to a set is zero in every point which is not included in the image of that set; see \cite[Thm.\ 3.1(d4)]{Deim}. That is, every $y\in f(I^d)$ such that $\dist(y,\partial f(I^d))>\lnorm{\infty}{f-g}$ must be included in $g(I^d)$ as well.
\end{proof}

We will also need two auxiliary lemmas on weak convergence of measures which are probably a common part of knowledge in measure theory.
\hl{For their proofs, we refer the reader to \cite[Lem.~5.5 and 5.6]{DKK2018}.}
\begin{lemma}\label{lemma:weakconvcrit}
Let $\nu$ and $(\nu_{n})_{n=1}^{\infty}$ be finite Borel measures with support in a compact set $K\subset \R^d$.
Moreover, assume that there is, for each $n\in\N$, a finite collection $\mc{Q}_{n}$ of Borel subsets of $K$ that satisfy the following:
\begin{enumerate}
\item\label{i:l_weakconvcrit1} $\displaystyle \nu\left(K\setminus\bigcup\mc{Q}_n\right)=0$ and $\displaystyle \nu_n\left(K\setminus\bigcup\mc{Q}_n\right)=0$.
\item\label{i:l_weakconvcrit2} $\displaystyle\sum_{Q\in\mc Q_n}\nu(Q)=\nu(K)$ and $\displaystyle\sum_{Q\in\mc Q_n}\nu_n(Q)=\nu_n(K)$.
\item $\displaystyle\lim_{n\to\infty}\max_{Q\in\mc Q_n}\diam(Q)=0$ and $\displaystyle\max_{Q\in\mc{Q}_{n}}\abs{\nu_{n}(Q)-\nu(Q)}\in o\br*{\frac{1}{\abs{\mc{Q}_n}}}$.
\end{enumerate}
Then $\nu_{n}$ converges weakly to $\nu$.
\end{lemma}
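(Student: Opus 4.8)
The plan is to verify the weak convergence $\nu_n\to\nu$ directly from the definition, by showing that $\int_K\phi\,\dint\nu_n\to\int_K\phi\,\dint\nu$ for every $\phi\in C(K)$. Since all the measures in question are supported in the compact set $K$, this is enough: any $\phi\in C(K)$ extends by Tietze to a bounded continuous function on $\R^d$, and, being continuous on a compact set, is automatically bounded and uniformly continuous. As a warm-up I would record that the total masses converge, since, by item~\ref{i:l_weakconvcrit2},
\begin{linenomath}
$$
\abs{\nu_n(K)-\nu(K)}=\abs{\sum_{Q\in\mc Q_n}\br*{\nu_n(Q)-\nu(Q)}}\le\abs{\mc Q_n}\cdot\max_{Q\in\mc Q_n}\abs{\nu_n(Q)-\nu(Q)},
$$
\end{linenomath}
and the right-hand side tends to $0$ by the second part of the third listed condition.

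The next step is to convert the ``essential disjointness'' encoded in items~\ref{i:l_weakconvcrit1} and~\ref{i:l_weakconvcrit2} into genuine additivity of integrals over the pieces. Write $\mc Q_n=\set{Q_1,\dots,Q_m}$, discarding any empty members (which contribute nothing to any sum or integral), and set $A_i:=Q_i\setminus\br*{Q_1\cup\dots\cup Q_{i-1}}$; the $A_i$ are pairwise disjoint with $\bigcup_i A_i=\bigcup\mc Q_n$. For $\mu\in\set{\nu,\nu_n}$ we then have
\begin{linenomath}
$$
\mu(K)=\mu\br*{\bigcup\mc Q_n}=\sum_{i=1}^m\mu(A_i)\le\sum_{i=1}^m\mu(Q_i)=\mu(K),
$$
\end{linenomath}
where the second equality uses item~\ref{i:l_weakconvcrit1} and the last uses item~\ref{i:l_weakconvcrit2}. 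Hence $\mu(A_i)=\mu(Q_i)$, i.e.\ $\mu(Q_i\setminus A_i)=0$, for every $i$, and therefore $\int_K\psi\,\dint\mu=\sum_{i=1}^m\int_{Q_i}\psi\,\dint\mu$ for every bounded Borel function $\psi$ and both $\mu=\nu$ and $\mu=\nu_n$.

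Now fix $\varepsilon>0$ and, by uniform continuity of $\phi$, choose $\delta>0$ with $\abs{\phi(x)-\phi(y)}\le\varepsilon$ whenever $x,y\in K$ and $\lnorm{2}{x-y}<\delta$. For $n$ so large that $\max_{Q\in\mc Q_n}\diam(Q)<\delta$ (possible by the first part of the third condition), choose a point $x_Q\in Q$ in each $Q\in\mc Q_n$. Using the additivity from the previous step together with item~\ref{i:l_weakconvcrit2}, for $\mu\in\set{\nu,\nu_n}$ we get
\begin{linenomath}
$$
\abs{\int_K\phi\,\dint\mu-\sum_{Q\in\mc Q_n}\phi(x_Q)\,\mu(Q)}\le\sum_{Q\in\mc Q_n}\int_Q\abs{\phi-\phi(x_Q)}\,\dint\mu\le\varepsilon\,\mu(K),
$$
\end{linenomath}
while
\begin{linenomath}
$$
\abs{\sum_{Q\in\mc Q_n}\phi(x_Q)\,\nu_n(Q)-\sum_{Q\in\mc Q_n}\phi(x_Q)\,\nu(Q)}\le\norm{\phi}_\infty\,\abs{\mc Q_n}\,\max_{Q\in\mc Q_n}\abs{\nu_n(Q)-\nu(Q)}.
$$
\end{linenomath}
The last quantity tends to $0$ by the third condition. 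Combining the three estimates via the triangle inequality, letting $n\to\infty$, and using $\nu_n(K)\to\nu(K)$ from the first paragraph yields $\limsup_{n\to\infty}\abs{\int_K\phi\,\dint\nu_n-\int_K\phi\,\dint\nu}\le 2\varepsilon\,\nu(K)$; since $\varepsilon>0$ was arbitrary, we conclude $\int_K\phi\,\dint\nu_n\to\int_K\phi\,\dint\nu$, as required.

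I do not anticipate a serious obstacle: this is the familiar scheme of approximating a continuous integrand by a step function adapted to a sufficiently fine partition. The only points deserving care are extracting honest $\nu$- and $\nu_n$-disjointness of the families $\mc Q_n$ from the mass-balance hypotheses~\ref{i:l_weakconvcrit1}--\ref{i:l_weakconvcrit2} (done above via the telescoping sets $A_i$), and arranging a single threshold on $n$ that simultaneously controls the mesh $\max_{Q}\diam(Q)$ and the cumulative error $\abs{\mc Q_n}\max_Q\abs{\nu_n(Q)-\nu(Q)}$, which is why it is cleanest to defer everything to a $\limsup_{n\to\infty}$ at the end.
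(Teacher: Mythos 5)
Your proof is correct and complete. The paper itself gives no proof of this lemma (it defers to \cite{DKK2018}), so there is nothing to diverge from; your argument is the standard partition/Riemann-sum scheme that the deferral presupposes, and the one step that genuinely requires justification --- deducing $\mu(Q_i\setminus A_i)=0$, and hence additivity of integrals over the possibly overlapping sets $Q_i$, from conditions~(1)--(2) --- is handled correctly via the disjointified sets $A_i$.
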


\begin{lemma}\label{lemma:weakconvcrit2}
Let $K$ be a compact set in $\R^d$ and $\br{\nu_n}_{n\in\N}$ be a sequence of finite Borel measures on $K$ converging weakly to a finite Borel measure $\nu$. Let $(h_n)_{n\in\N}$, $h_n\colon K\to \R^m$, be a sequence of continuous mappings converging uniformly to a mapping $h$. Then $(h_n)_\sharp(\nu_n)$ converges weakly to $h_\sharp(\nu)$.
\end{lemma}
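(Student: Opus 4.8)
The plan is to verify weak convergence directly from the definition: I would take an arbitrary bounded continuous function $\varphi\colon\R^m\to\R$ and show that $\int\varphi\,\dint\big((h_n)_\sharp\nu_n\big)\to\int\varphi\,\dint\big(h_\sharp\nu\big)$. Using the change-of-variables identity for pushforwards, $\int_{\R^m}\varphi\,\dint\big((h_n)_\sharp\nu_n\big)=\int_K\varphi\circ h_n\,\dint\nu_n$ and likewise $\int_{\R^m}\varphi\,\dint\big(h_\sharp\nu\big)=\int_K\varphi\circ h\,\dint\nu$, so the whole statement reduces to a claim about integrals over the \emph{fixed} compact set $K$. Before estimating, I would record two preliminary facts: first, $h$ is continuous, being a uniform limit of the continuous maps $h_n$; second, there is a single compact set $L\subset\R^m$ (for instance $L=\cl{B}(h(K),1)$) containing $h(K)$ and, for all sufficiently large $n$, all the images $h_n(K)$, since $\lnorm{\infty}{h_n-h}\to 0$ and $h(K)$ is bounded.

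The core of the argument is the splitting
\[
\int_K \varphi\circ h_n\,\dint\nu_n-\int_K \varphi\circ h\,\dint\nu
=\int_K\br*{\varphi\circ h_n-\varphi\circ h}\,\dint\nu_n
+\left(\int_K \varphi\circ h\,\dint\nu_n-\int_K\varphi\circ h\,\dint\nu\right),
\]
and I would treat the two summands separately. The second summand tends to $0$ directly by the weak convergence $\nu_n\to\nu$, applied to the test function $\varphi\circ h$, which is bounded and continuous on $K$ by the preliminary facts. For the first summand, uniform continuity of $\varphi$ on the compact set $L$ together with $\lnorm{\infty}{h_n-h}\to 0$ yields $\lnorm{\infty}{\varphi\circ h_n-\varphi\circ h}\to 0$ on $K$; since the masses $\nu_n(K)$ converge to $\nu(K)$ (apply weak convergence to the constant function $1$) and are therefore bounded, the first summand is bounded in absolute value by $\lnorm{\infty}{\varphi\circ h_n-\varphi\circ h}\cdot\sup_n\nu_n(K)$, which tends to $0$.

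The only point requiring a little care — and the closest thing to an obstacle — is ensuring the arguments are applied on compact sets: $\varphi\circ h$ must be a legitimate test function for weak convergence on $K$ (it is, since $h$ is continuous and $\varphi$ is bounded continuous), and uniform continuity of $\varphi$ must be invoked on a compact set rather than on all of $\R^m$ (handled by the common compact $L$). Everything else is routine estimation, and no tools beyond the pushforward change-of-variables formula, uniform continuity on compacta, and the definition of weak convergence are needed.
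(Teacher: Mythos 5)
Your proof is correct: the decomposition into $\int_K(\varphi\circ h_n-\varphi\circ h)\,\dint\nu_n$ plus the weak-convergence term, handled via uniform continuity of $\varphi$ on a common compact set containing $h(K)$ and all $h_n(K)$, together with the uniform bound on $\nu_n(K)$, is exactly the standard argument. The paper itself does not prove this lemma but defers to \cite{DKK2018}, where essentially this same argument appears, so there is nothing to add.
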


\jl{Observe that}
the next lemma \jl{combines with Theorem~\ref{thm:non_rlz_dsty_weak} to imply Theorem~\ref{thm:main_result_weak}}. The statement and part of its proof are a completely straightforward adaptation of \cite[Lem.~2.1]{BK1} by Burago and Kleiner. However, the majority of the proof we give below consists of  important details that are missing in \cite{BK1} and have never been published.
Moreover, these missing parts are especially relevant in our setting, where we consider less restrictive moduli of continuity than Lipschitz. In particular, the proof does not work for all \hl{Hölder} moduli of continuity.
In what follows we show that the \jl{lemma} is valid for all moduli of continuity $\omega$ which are sub-Hölder for sufficiently many Hölder moduli of continuity. The precise meaning of sufficiently many is determined by the dimension $d$ of the space $\R^{d}$.

\begin{lemma}\label{lemma:discrete_to_cts}
	Let $\omega\in \mc{M}$ be a modulus of continuity with the property that there is $\delta>0$, $K>0$ and $\alpha\hl{\in\br*{\frac{d-1}{d},1}}$ such that
	\begin{linenomath}	
	\begin{equation}\label{eq:omega_is_Hoelder}
		\omega(t)\leq K t^\alpha \qquad \text{for all $t\in(0,\delta]$.}
	\end{equation}
	\end{linenomath}
	Suppose that every separated net $X\subseteq \R^{d}$ is \jl{$\omega$-regular.}
	Then for any measurable density $\rho\colon I^{d}\to \R$ with $0<\inf\rho\leq\sup\rho<\infty$ there is a bi-$\omega$-mapping $f\colon I^{d}\to \R^{d}$ satisfying
	\begin{linenomath}
	\begin{equation}\label{eq:pushforward_2}
	f_{\sharp}\rho\leb=\rest{\leb}{f(I^{d})}.
	\end{equation}
	\end{linenomath}
\end{lemma}
\begin{proof}
	Fix a measurable density $\rho\colon I^{d}\to\R$ with $0<\inf\rho\leq \sup\rho<\infty$ and a strictly increasing sequence of natural numbers $(l_{k})_{k=1}^{\infty}$ on which we will impose further conditions in the course of the proof. Let $(S_{k})_{k=1}^{\infty}$ be a sequence of axis parallel, pairwise disjoint cubes in $\R^d$ such that each $S_{k}$ has side length $l_{k}$ and 
	\begin{linenomath}
	\begin{equation*}
	\bigcup_{i=1}^{k}S_{i}\subseteq B\left(\mb{0},\hl{d}\sum_{i=1}^{k}l_{i}\right)
	\end{equation*}
	\end{linenomath}
	for all $k\geq 1$. Fix a sequence of natural numbers $(m_{k})_{k=1}^{\infty}$ satisfying
	\begin{linenomath}
	$$
	\lim_{k\to\infty}m_{k}=\infty \qquad\text{ and }\qquad \lim_{k\to\infty}\frac{m_{k}}{l_{k}}=0.
	$$
	\end{linenomath}
	We impose one further condition on $m_{k}$ later on. For each $k\geq 1$ we let $\phi_{k}\colon \R^{d}\to \R^d$ be the unique affine mapping sending $I^d$ onto $S_{k}$ 	
	with scalar linear part and define $\rho_{k}\colon S_{k}\to \R$ by $\rho_{k}:=\rho\circ \rest{\phi_{k}^{-1}}{S_k}$.
	For each $k\geq 1$ we let $(T_{k,i})_{i=1}^{m_{k}^{d}}$ denote the standard partition of the cube $S_k$ into $m_{k}^{d}$ cubes of sidelength $l_k/m_{k}$.
	We further partition each cube $T_{k,i}$ into $n_{k,i}^{d}$ cubes $(U_{k,i,j})_{j=1}^{n_{k,i}^{d}}$ of equal sidelength, where $n_{k,i}\in\N$ is defined as the integer part of $\sqrt[d]{\lint{T_{k,i}}{\rho_{k}}}$.
	In order to make sure that each $n_{k,i}$ is positive, it is enough to choose the numbers $m_k$ and $l_k$ so that $(l_k/m_k)^d \inf\rho\geq 1$ for every $k\in\N$.
	
	\hl{We construct a separated net $X\subseteq \R^{d}$ in two steps. First we place one point at the centre of each cube $U_{k,i,j}$.
	The resulting set is a separated net of $\bigcup_{k=1}^\infty S_k$ because of the boundedness of $\rho$.
	We will not use any information about $X$ outside $\bigcup_{k=1}^\infty S_k$, and therefore, we do not require any further condition on $X$ there.}
	\begin{equation}\label{sentence_extend_net}
		\begin{split}
			&\text{In the second step, we extend $X$ to a separated net in $\R^{d}$ arbitrarily,}\\
			&\text{adding only points outside of $\bigcup_{k=1}^{\infty}S_{k}$.}
		\end{split}		
		\end{equation}
	We have labelled the above sentence because we wish to refer specifically to it in the proof of Proposition~\ref{p:displacement}, \jl{which will be given at the end of Section~3.}

Let $g\colon X\to\Z^{d}$ be a \jl{bijection such that both $g$ and $g^{-1}$ are homogeneous $\omega$-mappings}. For each $k\geq 1$ we let $X_{k}:=\phi_{k}^{-1}(X\cap S_k)=\phi_{k}^{-1}(X)\cap I^d$ and fix a point $z_{k}\in X_k$. Then we define \sug{a bijection} $f_{k}\colon \phi_{\hl{k}}^{-1}(X)\to \sug{\frac{1}{l_{k}}\Z^{d}}$ by
	\begin{linenomath}
	\begin{equation}\label{eq:f_k_def}
	f_{k}(x)=\frac{1}{l_{k}}(g\circ\phi_{k}(x)-g\circ \phi_{k}(z_{k})),\qquad x\in \phi_{\hl{k}}^{-1}(X).
	\end{equation}
	\end{linenomath}
	 We also set $R_{k}:=\hl{d}\sum_{i=1}^{k}l_{i}$. Mainly we are interested in the behaviour of $\rest{f_k}{X_k}$, but for technical reasons that will become clear at the end of the proof, we will need to work with a bit larger portion of $\phi_{k}^{-1}(X)$ than $X_k$ later on.
	 To this end, we fix a closed ball $\cl{B}$ centred at the origin such that $I^d\subset\intr \cl{B}$ and denote by $\cl{r}$ its radius. We define $\cl{X}_k:=\phi_{k}^{-1}(X)\cap \cl{B}$. Clearly, $X_k=\cl{X}_k\cap I^d$.	
		
	To obtain an estimate for the modulus of continuity of $\rest{f_{k}}{\cl{X}_k}$, we fix $x,y\in \cl{X}_{k}$ and observe that $\phi_k(x),\phi_k(y)\in B(\mb{0}, \cl{r}l_k+R_k)$. \hl{By the $\omega$-homogeneous property of $g$ there is $U'\sug{'}>0$ such that}
	\begin{linenomath}
	\begin{equation*}
	\lnorm{2}{f_{k}(y)-f_{k}(x)}\\
	\leq U\hl{'}\sug{'}\frac{\cl{r}l_k+R_{k}}{l_{k}}\omega\left(\frac{l_{k}\lnorm{2}{y-x}}{\cl{r}l_k+R_{k}}\right).
	\end{equation*}
	\end{linenomath}
	We now require a condition on the sequence $(l_{k})_{k=1}^{\infty}$ to ensure that the ratio $(\cl{r}l_k+R_{k})/l_{k}$ is bounded. Since $R_{k}=R_{k-1}+\hl{d}l_{k}$, it is sufficient to take $l_{k}\geq R_{k-1}$ for all $k$. Then $\hl{1\leq} R_{k}/l_{k}\leq \hl{d+1}$, and thus, $\hl{\cl{r}+1\leq}(\cl{r}l_k+R_{k})/l_{k}\leq\cl{r}+\hl{d+1}$.	
	Thus \hl{for $U\sug{'}:=(\cl{r}+d+1)U'\sug{'}$} we get 
	\begin{linenomath}
	\begin{equation}\label{eq:f_k_mod}
	\lnorm{2}{f_{k}(y)-f_{k}(x)}\leq  \hl{U}\sug{'}\omega(\lnorm{2}{y-x})
	\end{equation}
	\end{linenomath}
	for all $k$.
	
Next we use \jl{Lemma~\ref{lemma:ex}} to extend $\rest{f_{k}}{\cl{X}_k}$ to the whole $\cl{B}$ so that the extension, denoted by $\cl{f}_k$, is $\omega$-continuous with
	\begin{equation}\label{eq:lip_omega_f_k}
	\sug{\sup_{k\in \N}}\lipomeg(\cl{f}_{k})\leq U<\infty,
	\end{equation}
	\sug{where $U\geq U'$ is a constant depending only on $U\sug{'}$, $\omega$ and $\diam\cl{B}$. For the finitely many $k\in\N$ for which $\cl{X}_{k}$ is not an $\frac{a_\omega}{4}$-net of its convex hull $\operatorname{conv}(\cl{X})$, the conditions of Lemma~\ref{lemma:ex} are not satisfied; we solve this problem simply by discarding these finitely many $k$'s and relabelling the sequences indexed by $k$ accordingly.}

	We \sug{let $M>1$ be a large enough parameter whose choice will be specified shortly,} $h_k:=\sug{\rest{\br*{f_{k}^{-1}}}{{\cl{B}(\mb{0},M)\cap\frac{1}{l_{k}}\Z^{d}}}}$ and show that \sug{$h_{k}$} is also $\omega$-continuous. \sug{The parameter $M$ is chosen sufficiently large so that $\cl{f}_{k}(\cl{B})\subseteq \cl{B}(\mb{0},M)$ for every $k$. To see that this is possible,}
\sug{n}ote that $\mb{0}\in f_k(\cl{X}_k)\sug{\subseteq \cl{f}_{k}(\cl{B})}$, and thus, $\diam \jl{\cl{\normalcolor f}}_k(\sug{\cl{B}})$ is an upper bound on $\enorm{u}$ for every $u\in \jl{\cl{\normalcolor f}}_k(\sug{\cl{B}})$. 
	We have that 
	\begin{linenomath}
	\begin{equation}\label{eq:u_norm_bound}
	\diam\cl{f}_k\br*{\cl{B}}\leq\jl{\left(\frac{2\cl{r}}{a_\omega}+1\right)}\lipomeg(\cl{f}_k)\omega(a_\omega)
		\leq\jl{\left(\frac{2\cl{r}}{a_{\omega}}+1\right)}U\omega(a_\omega)\sug{,}
	\end{equation}
	\end{linenomath}
	\sug{so it suffices to choose $M$ larger than the latter quantity of \eqref{eq:u_norm_bound}.}
	\sug{Now} we show that \sug{$h_{k}$} is $\omega$-continuous\sug{.}
	In the argument that follows we call on a basic property of homogeneous $\omega$-mappings, namely that any homogeneous $\omega$-mapping $h$ may increase norms of non-zero vectors by at most some constant factor $C_{h}$;
this fact will be referred to as the `scaling property'.
The verification of this property is an easy exercise in the definition of homogeneous $\omega$-mapping, which we leave to the reader. \sug{From \eqref{eq:f_k_def} we get that for $u\in \frac{1}{l_k}\Z^d$
\begin{linenomath}
	\begin{equation*}
	f_k^{-1}(u)=\phi_k^{-1}\br*{g^{-1}\br*{l_ku+c_k}},
	\end{equation*}
\end{linenomath}
where $c_k:=g\br*{\phi_k(z_k)}$. Note that $\phi_k(z_k)$ can be $\mb{0}$ for at most one $k\in\N$, \jl{since $\phi_{k}(z_{k})\in S_{k}$}.}
	The scaling property of $g$\jl{, together with $\phi_{k}(z_{k})\in S_{k}\subseteq \cl{B}(\mb{0},R_{k})$ and $c_{k}=g(\phi_{k}(z_{k}))$, yields that $\lnorm{2}{c_{k}}\leq C_{g}R_{k}$ for every $k\in\N$ large enough. Therefore, for all sufficiently large $k\in \N$ and} every $u\in \frac{1}{l_k}\Z^d\cap\cl{B}(\mb{0},M)$
	\[
	\lnorm{2}{l_ku+c_k}\leq M l_k+C_g R_k\leq (M+C_g)R_k.
	\]
	Hence, for every $u,v\in \frac{1}{l_k}\Z^d\cap\cl{B}(\mb{0},M)$ and $k\in\N$ large enough
	we get that
	\begin{linenomath}
	\begin{equation}\label{eq:f_k_inverse_mod}
	\begin{split}	
	&\lnorm{2}{f_k^{-1}(u)-f_k^{-1}(v)}=\frac{1}{l_k}\lnorm{2}{g^{-1}(l_ku+c_k)-g^{-1}(l_kv+c_k)}\\
	&\leq\frac{L'\sug{'}(M+C_g)R_k}{l_k}\omega\br*{\frac{l_k\lnorm{2}{u-v}}{(M+C_g)R_k}}\leq L'\sug{'}(M+C_g)(d+1)\omega\br*{\enorm{u-v}},
	\end{split}
	\end{equation}
	\end{linenomath}
	where $L'\sug{'}>0$ is the multiplicative constant from the $\omega$-homogeneous property of $g^{-1}$. The last inequality above follows from the relations $1\leq R_k/l_k\leq d+1$. Thus, there is $L\sug{'}>\sug{L''}$ such that for ever\jl{y} $k\in\N$ and all $u,v\in \sug{\cl{B}(\mb{0},M)\cap\frac{1}{l_{k}}\Z^{d}}$
	\begin{equation}\label{eq:h_k_mod}
	\lnorm{2}{h_k(u)-h_k(v)}\leq L\sug{'}\omega\br*{\lnorm{2}{u-v}}.
	\end{equation}

Now we extend each $h_k$ to \sug{$\cl{B}(\mb{0},M)$} preserving its $\omega$-continuity using Lem\-ma~\ref{lemma:ex}; the extension is denoted by $\cl{h}_k$ \sug{and we note that our application of Lemma~\ref{lemma:ex} ensures that $\sup_{k\in\N}\lipomeg(\cl{h}_{k})\leq L<\infty$ for some constant $L\sug{\geq L'}$ depending only on $L\sug{'}$, $\omega$ and M}. \sug{In this step we again discard the at most finitely many indices $k$ for which the conditions of Lemma~\ref{lemma:ex} are not satisfied.} By the Arzel\`a--Ascoli theorem, we may pass to a subsequence of $(\cl{f}_{k})_{k=1}^{\infty}$ \hl{so that both $\br*{\cl{f}_k}_{k=1}^\infty$ and $\br*{\cl{h}_k}_{k=1}^\infty$ converge uniformly. Let $f:=\lim_{k\to\infty}\cl{f}_{k}$, $h:=\lim_{k\to\infty}\cl{h}_{k}$ and note that both $f$ and $h$ are $\omega$-mappings} with $\lipomeg(f)\leq U<\infty$ \hl{and $\lipomeg(h)\leq L<\infty$}.
\hl{We will show that $h(f(x))=x$ for every $x\in\cl{B}$. This implies that $f^{-1}$ is well-defined and equals $\rest{h}{f(\cl{B})}$.}

	There are positive constants $s=s(\rho)$, $b'=b'(\rho)$ such that $X$ is an $s$-separated $b'$-net in $\R^{d}$.
	It follows that for $\hl{b:=2b'}$ each set $\cl{X}_{k}$ is an $s/l_{k}$-separated $b/l_{k}$-net in $\cl{B}$.
Fix $x\in \overline{B}$ and choose $x_{k}\in \overline{X}_{k}$ such that $\lnorm{2}{x_{k}-x}\leq \frac{b}{l_{k}}$. Then for all sufficiently large $k$
		\begin{linenomath}		
		\begin{align*}
		\lnorm{2}{h\br*{f(x)}-x}\leq& \lnorm{2}{h\br*{ f(x)}-h\br*{f(x_{k})}}+\lnorm{2}{h\br*{f(x_{k}})-\cl{h}_{k}\br*{\cl{f}_{k}(x_{k})}}\\
		&+\lnorm{2}{x_{k}-x}\\
		\leq& L\omega\left(U\omega\left(\frac{b}{l_{k}}\right)\right)+\lnorm{\infty}{h\circ f-\cl{h}_{k}\circ \cl{f}_{k}}+\frac{b}{l_{k}}.
		\end{align*}
		\end{linenomath}
Since the right-hand side converges to zero as $k$ goes to infinity, we verify that $h(f(x))=x$.

	We now prove \eqref{eq:pushforward_2}. For $k\geq 1$, define a measure $\mu_{k}$ on $I^{d}$ by
\begin{linenomath}
\begin{equation*}
\mu_{k}(A):=\frac{1}{l_{k}^{d}}\left|A\cap X_{k}\right| \qquad \forall A\subseteq I^{d},
\end{equation*}
\end{linenomath}
\begin{claim}
The measure $\mu_k$ converges weakly to $\rho\rest{\leb}{I^d}$.
\end{claim}
\begin{proof}
This follows immediately from Lemma~\ref{lemma:weakconvcrit}. Note that the required collection $\mc{Q}_k$ can be defined as $\set{\phi_k^{-1}(T_{k,i})\colon i\in[m_k^d]}$. Then $\diam\br*{\phi_k^{-1}\br*{T_{k,i}}}\leq \hl{\sqrt{d}/m_k}\to 0$ as $k\to \infty$. Moreover, we have that
\begin{linenomath}
$$
\mu_k\left(\phi_k^{-1}(T_{k,i})\right)=\frac{n_{k,i}^d}{l_{k}^d}\leq  \frac{1}{l_{k}^d} \lint{T_{k,i}}{\rho_k}=\rho\leb\br*{\phi_{k}^{-1}(T_{k,i})},
$$
\end{linenomath}
and similarly, using the Binomial theorem,
\begin{linenomath}
\begin{align*}
\mu_k\left(\phi_k^{-1}(T_{k,i})\right)&\geq \frac{1}{l_{k}^d} \br*{\sqrt[d]{\lint{T_{k,i}}{\rho_k}}-1}^d\\
&\geq \rho\leb\br*{\phi_{k}^{-1}(T_{k,i})}-\frac{2^d\sup\rho^{\frac{d-1}{d}}}{l_k m_k^{d-1}},
\end{align*}
\end{linenomath}
for all $k$ large enough.
Since $\frac{m_k}{l_k}\to 0$, this proves that
\begin{linenomath}
$$
\abs{\mu_k\left(\phi_k^{-1}(T_{k,i})\right)-\rho\leb\left(\phi_k^{-1}(T_{k,i})\right)}\in o\br*{\frac{1}{m_k^d}}.
$$
\end{linenomath}
\end{proof}

The claim above also implies, by Lemma~\ref{lemma:weakconvcrit2}, that $(\rest{\cl{f}_k}{I^d})_\sharp \mu_k$ converges weakly to $(\rest{f}{I^d})_\sharp \rho\leb$, since $\cl{f}_{k}$ converges uniformly to $f$.

It remains to show that $(\rest{\cl{f}_k}{I^d})_\sharp \mu_{k}$ converges weakly to $\rest{\leb}{f(I^{d})}$. To this end we compare $(\rest{\cl{f}_k}{I^d})_\sharp \mu_{k}$ with the standard normalised counting measure on $\frac{1}{l_{k}}\Z^{d}$
\begin{linenomath}
\begin{equation*}
\nu_{k}(A):=\frac{1}{l_{k}^{d}}\abs{A\cap \frac{1}{l_{k}}\Z^{d}},\qquad A\subseteq \R^{d},
\end{equation*}
\end{linenomath}
which clearly converges weakly to the Lebesgue measure. For a given continuous function $\varphi\colon\R^{d}\to\R$ with compact support we need to verify
\begin{linenomath}
\begin{equation}\label{eq:weakconv}
\abs{\niceint{f(I^d)}{\varphi}{\nu_{k}}-\niceint{\cl{f}_k(I^d)}{\varphi}{(\rest{\cl{f}_k}{I^d})_\sharp \mu_{k}}}\stackrel[\hl{k\to\infty}]{}{\longrightarrow}0.
\end{equation}
\end{linenomath}

We bound the expression in \eqref{eq:weakconv} above by the sum of two terms:
\begin{linenomath}
\begin{align}\label{eq:weakconv_split}
\begin{split}
\abs{\niceint{f(I^d)}{\varphi}{\nu_{k}}-\niceint{\cl{f}_k(I^d)}{\varphi}{\nu_{k}}}+\abs{\niceint{\cl{f}_k(I^d)}{\varphi}{\nu_{k}}-\niceint{\cl{f}_k(I^d)}{\varphi}{(\rest{\cl{f}_k}{I^d})_\sharp \mu_{k}}}
\end{split}
\end{align}
\end{linenomath}
The first term is at most $\lnorm{\infty}{\varphi}\nu_{k}(f(I^{d})\Delta \cl{f}_{k}(I^{d}))$, which vanishes as $k\to\infty$ due to Lemma~\ref{lem:close_fun_close_img}, Corollary~\ref{cor:Hausdorff}, the weak convergence of $\nu_{k}$ to $\leb$ \hl{and the fact that $f$ is bi-Hölder \eqref{eq:omega_is_Hoelder}}. The second term may be bounded above by
\begin{linenomath}
\begin{equation}\label{eq:second_term_bound}
\frac{\lnorm{\infty}{\varphi}}{l_{k}^{d}}\abs{A_{k}},\qquad \text{where $A_{k}:=\cl{f}_{k}(I^{d})\cap \frac{1}{l_{k}}\Z^{d}\setminus f_{k}(X_{k})$.}
\end{equation}
\end{linenomath}
We will argue that
\begin{linenomath}
\begin{equation}\label{eq:Ak_inclusion}
A_{k}\subseteq \hl{\cl{\normalcolor B}}\left(\partial f(I^{d}),\lnorm{\infty}{\cl{f}_{k}-f}\right)
\end{equation}
\end{linenomath}
for all $k$ sufficiently large. Once this is established the quantity of \eqref{eq:second_term_bound} is seen to be at most
\begin{linenomath}
\begin{equation*}
\lnorm{\infty}{\varphi}\leb\br*{\hl{\cl{\normalcolor B}}\br*{\partial f(I^{d}),\lnorm{\infty}{\cl{f}_{k}-f}+\frac{\sqrt{d}}{l_{k}}}},
\end{equation*}
\end{linenomath}
which converges to zero as $k\to\infty$ by Corollary~\ref{cor:Hausdorff}. Hence, to complete the verification of the weak convergence of $(\rest{\cl{f}_{k}}{I^d})_{\sharp}\mu_{k}$ to $\leb|_{f(I^{d})}$, we prove \eqref{eq:Ak_inclusion}.

From now on we treat $k$ as fixed but sufficiently large, \jl{and use that expressions involving $\omega$, like $\omega\left(\frac{b}{l_{k}}\right)$, are well defined for all sufficiently large $k$.}
Since $f_{k}$ is a bijection $\phi_{k}^{-1}(X)\to \frac{1}{l_{k}}\Z^{d}$ and $X_{k}=\phi_{k}^{-1}(X)\cap I^{d}$, any point in $A_{k}$ has the form $f_{k}(x)$ for some $x\in \phi^{-1}_{k}(X)\setminus I^{d}$.
If $f_{k}(x)\notin f(I^{d})$ then $f_{k}(x)\in \hl{A_k\setminus f(I^d)}\subseteq \cl{f}_{k}(I^{d})\setminus f(I^{d})$, and therefore, $\dist(f_{k}(x),\partial f(I^{d}))\leq\lnorm{\infty}{\cl{f}_{k}-f}$.

In the remaining case we have $f_{k}(x)=f(y)$ for some $y\in I^{d}$. By the definition of $A_k$, there must also be $v\in I^d$ such that $f_k(x)=\cl{f}_k(v)$.
Because $\cl{X}_k$ is a $b/l_k$-net of $\cl{B}\supset\hl{\intr\cl{B}}\supset I^d$ and $k$ is large enough, there is $v'\in \cl{X}_k\cap \cl{B}(I^d,b/l_k)$ such that
$\lnorm{2}{f_k(x)-f_k(v')}=\lnorm{2}{\cl{f}_k(v)-\cl{f}_k(v')}\leq U\omega\br*{b/l_k}$ thanks to \hl{\eqref{eq:lip_omega_f_k}}.
\hl{Since $f_k(x),f_k(v')\in f_k(\cl{B})\cap\frac{1}{l_k}\Z^d$, \eqref{eq:u_norm_bound} and \eqref{eq:f_k_inverse_mod} imply that}
\begin{linenomath}
\begin{equation*}
\hl{
\enorm{x-v'}=\enorm{f_k^{-1}(f_k(x))-f_k^{-1}(f_k(v'))}\leq C\omega\br*{\enorm{f_k(x)-f_k(v')}}
}
\end{equation*}
\end{linenomath}
\hl{
for some constant $C>1$ independent of $k$. Combining it with the upper bound $U\omega(b/l_k)$ on $\enorm{f_k(x)-f_k(v')}$  we derive 
}
\begin{equation}\label{eq:dist_to_bdry}
\hl{
\enorm{x-v'}\leq C\omega\br*{U\omega\br*{\frac{b}{l_k}}},
}
\end{equation}
which goes to zero as $k$ goes to infinity.

To show that $\dist(f_{k}(x),\partial f(I^{d}))\leq\lnorm{\infty}{\cl{f}_{k}-f}$, we may assume that the upper bound of \eqref{eq:dist_to_bdry} is smaller than $\cl{r}-\sqrt{d}-\frac{b}{l_k}$ as $k$ is sufficiently large; \jl{recall that $\cl{r}>\sqrt{d}$ because $I^{d}\subseteq \operatorname{int}\cl{B}= B(\mb{0},\cl{r})$.}
Since $v'\in \cl{B}\br*{I^d, \frac{b}{l_k}}\subseteq\hl{\cl{B}\br*{\mb{0},\sqrt{d}+\frac{b}{l_k}}}$,
\hl{we have}
$x\in \cl{B}$\hl{, and thus,} $f$ is defined at $x$.
Note that $f(x)\notin f(I^{d})$, because $x\notin I^{d}$ and $f\colon\cl{B}\hookrightarrow\R^{d}$ is an \hl{injection}.
Hence\hl{, as $f_k(x)=f(y)\in f(I^d)$,}
\begin{linenomath}
\begin{equation*}
\dist(f_{k}(x),\partial f(I^{d}))\leq\lnorm{2}{f_{k}(x)-f(x)}\leq \lnorm{\infty}{\cl{f}_{k}-f},
\end{equation*}
\end{linenomath}
as required.
\end{proof}

It now only remains to prove Theorem~\ref{thm:non_rlz_dsty}. We provide an argument based on the following geometric statement, proved in Section~\ref{section:geometric}. 
Constructions of non-realisable densities based on statements of this type have already been written in great detail in \cite{DKK2018} and originally in \cite{BK1}. Therefore, following Lemma~\ref{lemma:geometric_amalg} we only give an informal sketch of the proof of Theorem~\ref{thm:non_rlz_dsty}.
\begin{lemma}\label{lemma:geometric_amalg}
Let $d\geq 2$. Then there is $\alpha_0=\alpha_0(d)\hl{\in(0,\infty)}$ such that for $\omega\in\M$ of the form
		\begin{linenomath}
		\begin{equation*}
		\omega(t)\leq t\left(\log\frac{1}{t}\right)^{\alpha_0},\qquad \text{for all $t\in(0,a_{\omega})$},
		\end{equation*}
		\end{linenomath}
		the following statement holds:
		
		Let $k\in\N$, $c\in(0,a_{\omega})$, $\varepsilon\in(0,1)$ and $L\geq 1$. Then there exists $r=r(d,L\sqrt{k}\omega,\varepsilon,c)\in\N$ such that for every open ball $U\subseteq\R^{d}$ of radius at least $2c\sqrt{d}$ there exist finite tiled families $\Sq_{1},\Sq_{2},\ldots,\Sq_{r}$ of cubes contained in $U$ with the following properties:
		\begin{enumerate}
			\item\label{lemma:geometric1_amalg} For each $1\leq i<r$ and each cube $S\in\Sq_{i}$
			\begin{linenomath}
				\begin{equation*}
				\leb \Biggl(S\cap\bigcup_{j=i+1}^{r}\bigcup\Sq_{j}\Biggl)\leq \poly{\varepsilon}\leb (S). 
				\end{equation*}
			\end{linenomath}
			\item\label{lemma:geometric2_amalg} For any $k$-tuple $(h_{1},\ldots,h_{k})$ of bi-$\omega$-mappings $h_{j}\colon U\to\R^{d}$ for which $\max \bilipomeg(h_{j})\leq L$ there exist $i\in[r]$ and $\mb{e}_{1}$-adjacent cubes $S,S'\in \Sq_{i}$ such that
			\begin{linenomath}
				\begin{equation}\label{eq:volume_difference}
				\frac{\abs{\leb(h_j(S))-\leb(h_j(S'))}}{\leb(S)}\leq\kappa(\varepsilon)
				\end{equation}
			\end{linenomath}
			for all $j\in[k]$, where $\lim_{\varepsilon \to 0}\kappa(\varepsilon)=0$.
		\end{enumerate}
\end{lemma}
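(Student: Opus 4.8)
The plan is to obtain Lemma~\ref{lemma:geometric_amalg} by combining a purely geometric volume‑distortion statement for a single bi‑$\omega$‑mapping (the content of Lemma~\ref{lemma:geometric}, proved in Section~\ref{section:geometric}) with the verification that the logarithmic moduli $t\mapsto t\log(1/t)^{\alpha_0}$ lie, for $\alpha_0=\alpha_0(d)$ small enough, in the subclass of $\M$ to which that statement applies (the role of Lemma~\ref{lemma:M1}, building on Lemma~\ref{lemma:tlogp}; membership in $\M$ holds for every $\alpha_0>0$, and the smallness of $\alpha_0$ is dictated only by the quantitative hypothesis of Lemma~\ref{lemma:geometric}). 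To reduce a $k$‑tuple $(h_1,\dots,h_k)$ to a single mapping I would use the diagonal embedding $H:=(h_1,\dots,h_k)\colon U\to(\R^d)^k$: if each $\bilipomeg(h_j)\le L$ then $\lipomeg(H)\le\sqrt k\,L$ while $\lipomeg(H^{-1})\le\lipomeg(h_1^{-1})\le L$, so $\bilipomeg(H)\le\sqrt k\,L$ — this is precisely how the modulus $L\sqrt k\,\omega$ enters the dependence of $r$. Consequently the single‑map statement has to be proved for mappings into $\R^n$ with $n\ge d$, with the $d$‑dimensional image‑volume taken to be $\haus^d$; closeness of $\haus^d(H(S))$ and $\haus^d(H(S'))$ is then transferred to closeness of $\leb(h_j(S))$ and $\leb(h_j(S'))$ for each $j$ via the coordinate projections $(\R^d)^k\to\R^d$, which are $1$‑Lipschitz and, since $h_j$ is injective, carry $H(S)$ bijectively onto $h_j(S)$.

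Property~\ref{lemma:geometric1_amalg} involves no mappings, so I would dispose of it first. Fix geometrically decreasing side lengths $\ell_1\gg\ell_2\gg\cdots\gg\ell_r$, let $\Sq_1$ be a tiled family of $\ell_1$‑cubes filling a sub‑ball of $U$ (this is where the assumption that $U$ has radius at least $2c$, together with the constant $c$, is used to anchor $\Sq_1$ safely inside $U$), and obtain $\Sq_{i+1}$ by selecting a concentric sub‑block of side $(1-\poly{\varepsilon})\ell_i$ in a chosen subcollection of the cubes of $\Sq_i$ and tiling it by $\ell_{i+1}$‑cubes, thus discarding a collar of relative width $\poly{\varepsilon}$ at each step. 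A geometric‑series estimate over these collars yields $\leb\bigl(S\cap\bigcup_{j>i}\bigcup\Sq_j\bigr)\le\poly{\varepsilon}\,\leb(S)$ for every $S\in\Sq_i$, which is Property~\ref{lemma:geometric1_amalg}; one arranges in addition that each $\Sq_i$ contains long $\mb{e}_1$‑rows of cubes, as needed below.

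The heart is Property~\ref{lemma:geometric2_amalg}, which I would prove by contradiction: suppose that, for the diagonal mapping $H$ above, at every level $i$ and every $\mb{e}_1$‑adjacent pair $S,S'\in\Sq_i$ one has $|\haus^d(H(S))-\haus^d(H(S'))|>\kappa(\varepsilon)\,\leb(S)$. The set function $V(S):=\haus^d(H(S))$ is essentially additive over a tiled grid: by Lemma~\ref{lem:Hausdorff} (applied to $H$, which is $\alpha$‑Hölder on small scales for every $\alpha<1$), images of cube faces have $\haus^d$‑measure zero, so, up to the $\poly{\varepsilon}$ collar loss, $V$‑mass splits across refinements. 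Moreover, at scale $\ell_i$ there are two‑sided bounds $m_i\le V(Q)/\leb(Q)\le M_i$ with $M_i/m_i\le C(d)\,\log(1/\ell_i)^{c(d)\alpha_0}$, coming from $\omega(\ell_i)$ and $\omega^{-1}(\ell_i)$ and the bi‑$(\sqrt k L\,\omega)$ property of $H$. Following a fixed $\mb{e}_1$‑row through the scales and passing to the induced refinements, a per‑step variation of size $\kappa(\varepsilon)$ occurring at \emph{every} one of the $r$ scales forces the normalised image‑volume $V(Q)/\leb(Q)$ to leave the admissible window $[m_i,M_i]$: that window only widens polylogarithmically in $1/\ell_i$, whereas a systematic variation compounds at least like a fixed positive power of $r$ — the quantitative, $\omega$‑adapted form of Burago and Kleiner's volume‑accumulation obstruction. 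Choosing $\alpha_0=\alpha_0(d)$ small enough makes the compounding outpace the polylogarithmic slack, so once $r=r(d,L\sqrt k\,\omega,\varepsilon,c)$ is large the contrary hypothesis fails; transferring back through the coordinate projections, at the resulting good level the corresponding $\mb{e}_1$‑adjacent pair satisfies \eqref{eq:volume_difference} for every $j\in[k]$, with $\kappa$ depending only on $\varepsilon$ and $\lim_{\varepsilon\to0}\kappa(\varepsilon)=0$.

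The main obstacle is setting up that accumulation estimate correctly: choosing the right potential — say a total variation, or an $L^2$‑type energy, of $Q\mapsto\log\bigl(V(Q)/\leb(Q)\bigr)$ summed along $\mb{e}_1$‑rows — showing it is controlled at the bottom scale by a quantity depending only on $d$ and the modulus, and showing that the contrary hypothesis forces it to grow with the number of scales. It is here, through the two‑sided scale bounds and the boundary‑nullity input from Lemma~\ref{lem:Hausdorff} and Corollary~\ref{cor:Hausdorff} with codomain dimension $n=dk$, that the threshold $\alpha_0$ is forced to depend on $d$. The remaining ingredients — the collar constructions behind Property~\ref{lemma:geometric1_amalg}, anchoring the families in a ball of radius at least $2c$, the diagonal reduction and the projection transfer, and tracking the explicit forms of $\kappa(\varepsilon)$ and $\poly{\varepsilon}$ — are routine and are carried out in Section~\ref{section:geometric}.
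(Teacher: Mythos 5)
Your high-level architecture matches the paper's: the lemma is indeed an amalgamation of the geometric multiscale statement (Lemma~\ref{lemma:geometric}) with the verification that $\kappa(\varepsilon)\to 0$ for $\omega(t)\le t\log(1/t)^{\alpha_0}$ with $\alpha_0(d)$ small (Lemma~\ref{lemma:M1}), and your reduction of the $k$-tuple to a single map $H=(h_1,\dots,h_k)\colon U\to\R^{kd}$ with $\bilipomeg(H)\le L\sqrt k$ is exactly the paper's device, explaining the dependence $r=r(d,L\sqrt k\,\omega,\varepsilon,c)$. However, there are two genuine gaps in how you propose to execute the core step. First, the transfer from the combined map back to the individual $h_j$ does not work as you describe it: the coordinate projections $(\R^d)^k\to\R^d$ are $1$-Lipschitz, so they only give the one-sided inequality $\leb(h_j(S))\le\haus^d(H(S))$; closeness of $\haus^d(H(S))$ and $\haus^d(H(S'))$ tells you nothing about $\abs{\leb(h_j(S))-\leb(h_j(S'))}$, since the projection may shrink the two image volumes by very different amounts. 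The paper avoids this entirely because the ``good'' alternative of its dichotomy (Lemma~\ref{lemma:dichotomy}, statement~\ref{dich1}) is not a volume statement but an approximate-translation statement about the map itself, namely $\lVert g(\mb{x}+\tfrac{c}{N}\mb{e}_1)-g(\mb{x})-\tfrac1N(g(c\mb{e}_1)-g(\mb 0))\rVert_2\le\varepsilon\omega(c/N)$; this restricts coordinatewise to each $h_j$, and only then is the volume comparison performed separately for each $j$ via Lemma~\ref{lemma:volume} (translate $h_j|_{S}$ and compare with $h_j|_{S'}$ as two nearby embeddings of the same cube). Some statement of this map-level type is needed to get \eqref{eq:volume_difference} \emph{for all} $j\in[k]$ simultaneously.

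Second, the accumulation mechanism you sketch is set up on the wrong quantity and is left unproved at exactly the point where the work lies. The paper's obstruction does not track the normalised image volume $V(Q)/\leb(Q)$ of the combined map; it tracks the linear stretch $\lVert g(c_i\mb{e}_1)-g(\mb 0)\rVert/c_i$ along $\mb{e}_1$, which the ``bad'' alternative of the dichotomy multiplies by $(1+\varphi)$ at each descent to a smaller scale, and which after $r$ descents (with $r$ as in Definition~\ref{def_par_subfam} and Lemma~\ref{lemma:rbound}) contradicts $\lipomeg(g)\le 1$ because of \eqref{eq:num_iter}. Your proposed potential (a total variation or $L^2$ energy of $\log(V(Q)/\leb(Q))$ along rows) is not shown to increase under the contrary hypothesis, and the two-sided bounds $m_i\le V(Q)/\leb(Q)\le M_i$ with polylogarithmic ratio do not by themselves produce a contradiction without a proof that a per-scale volume discrepancy of size $\kappa(\varepsilon)$ forces a definite multiplicative drift; this is precisely the inductive dichotomy (Lemmas~\ref{lemma:1dimdich} and~\ref{lemma:dichotomy}) that you would need to supply. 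Relatedly, the smallness of $\alpha_0(d)$ in the paper is not governed by your window-versus-compounding comparison but by the explicit bounds on $r$ and on the sidelengths $\ell(S)\ge(c\,\ply{d}{\varepsilon})^{(r+1)^2}$ (Corollary~\ref{cor:sidelength}), which are what make $\kappa(\varepsilon)=\sup\upsilon(d,\omega,L,k,\varepsilon,\ell(S))$ tend to zero in Lemma~\ref{lemma:M1}.
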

We note that the upper bounds in Statements~\ref{lemma:geometric1_amalg} and \ref{lemma:geometric2_amalg} depend on $d, \omega, k, L$ and $c$.

\begin{remark*}[The role of the parameter $k$ in Lemma~\ref{lemma:geometric_amalg}]
We will only require Lemma~\ref{lemma:geometric_amalg} for the case $k=1$, that is, we only apply it to single bi-$\omega$ \jl{mapping} and not to $k$-tuples of bi-$\omega$ \jl{mappings}.
However, the work \cite{DKK2018} shows that such statements for $k$-tuples can be very useful and so we prove Lemma~\ref{lemma:geometric_amalg} for general $k$ in case it finds future applications.
\end{remark*}

\begin{proof}[Proof of Theorem~\ref{thm:non_rlz_dsty}]
\hl{For $L>1$} let $\mc{G}_L$ be the set of those continuous functions $\rho\colon I^d\to\R$ for which the pushforward equation \eqref{eq:pushforward} admits a bi-$\omega$ solution $f\colon I^d \to \R^d$ with $\bilipomeg(f)\leq L$.
We want to argue that there is a constant $M=M(L)>1$ such that for any $\rho\in \mc{G}_L$ and every $\xi>0$ there is $\tilde{\rho}\in C(I^d)$ with $\lnorm{\infty}{\rho-\tilde{\rho}}\leq \xi$ and such that the ball $B(\tilde{\rho},\xi/M)$ in the space $C(I^d)$ is disjoint from $\mc{G}_L$.

We will describe the argument here only informally, since the argument of \cite[Thm.~4.8]{DKK2018} could be used here essentially without a~change, only replacing the use of \cite[Lem.~3.1]{DKK2018} with its stronger form Lemma~\ref{lemma:geometric_amalg} presented above and making the construction continuous as in \cite[Lem.~4.6]{DKK2018}.

\hl{Fix $\rho\in \mc{G}_{L}$ and $\xi>0$. Then} for every sequence of tiled families $\mc{S}_1, \ldots, \mc{S}_r$ as in statement~\ref{lemma:geometric1_amalg} of Lemma~\ref{lemma:geometric_amalg}, there is a continuous function $\psi\colon I^d\to\R$ with $\lnorm{\infty}{\psi}\leq\xi$ with the following properties:
\begin{enumerate}
	\item[(1)] $\psi(x)=0$ for every $x\in I^d\setminus \bigcup_{i=1}^r\bigcup\mc{S}_i$,
	\item[(\mylabel{it:chessboard}{2})] for every $i\in[r]$ and every $\mb{e}_1$-adjacent $S,S'\in\mc{S}_i$
	 \begin{linenomath}
	 $$
	 \frac{1}{\leb(S)}\abs{\lint{S}{\psi}-\lint{S'}{\psi}}\geq \xi.
	 $$
	 \end{linenomath}
\end{enumerate}
This is easy to see: \jl{s}tart by defining a chessboard function with values $\pm \xi$ on the tiled family $\Sq_{1}$. Then modify this function on the tiled family $\Sq_{2}$, creating a chessboard pattern of $\pm \xi$ values there and repeat for the remaining tiled families $\Sq_{3},\ldots,\Sq_{r}$. Call the final function $\psi$. Provided that $\varepsilon$ is chosen small enough relative to $\xi$, statement \ref{lemma:geometric1_amalg} of Lemma~\ref{lemma:geometric_amalg} ensures that for every $i\in[r]$ the $\pm \xi$ values of $\psi$ on the cubes in $\Sq_{j}$ for $j>i$ have negligible impact on the average value of $\psi$ on the much larger cubes from $\Sq_{i}$. Thus, the final function $\psi$ satisfies \eqref{it:chessboard}. Continuity of $\psi$ is taken care of by smoothing in a small enough neighbourhood of the boundaries of the cubes in each step; see \cite[Lem.~4.6]{DKK2018}.

Now applying Lemma~\ref{lemma:geometric_amalg} with $U$ small enough, $0<c<\diam(U)/(4\sqrt{d})$,
 $L, k=1, \omega$ and $\varepsilon>0$ small enough so that $\kappa(\varepsilon)$ becomes smaller than, say, $\xi/4$, one gets $r\in\N$ and tiled families $\mc{S}_1,\ldots, \mc{S}_r$ contained in $U$. Applying the construction sketched above to these tiled families $\mc{S}_1,\ldots, \mc{S}_r$ and the given $\xi$, we get $\psi$ and define the desired $\tilde{\rho}$ as $\tilde{\rho}:=\rho+\psi$.

Choosing $U$ small enough, $\rho$ is almost constant on $U$, and thus, any continuous function $\phi$ with $\lnorm{\infty}{\tilde{\rho}-\phi}\leq \xi/M$ must follow essentially the same chessboard pattern as $\psi$ in property~\eqref{it:chessboard}, just with $\xi$ replaced with $\xi/2$ (for $M$ large enough and $U$ small enough).
However, this excludes $\phi$ from the set $\mc{G}_{L}$. Indeed, the existence of a bi-$\omega$ mapping $h$ witnessing $\phi\in\mc{G}_{L}$ forces property~\eqref{it:chessboard} of the function $\phi$ to become incompatible with statement~\ref{lemma:geometric2_amalg} from Lemma~\ref{lemma:geometric_amalg}. (To see this, note that whenever there is a bi-$\omega$ mapping $h\colon I^d\to\R^d$ satisfying $h_{\#}\phi\leb=\rest{\leb}{h(I^d)}$, we also have that $\leb(h(A))=\lint{A}{\phi}$ for any measurable $A\subset I^d$, i.e., also for $A=S$ and $A=S'$, where $S,S'$ are the cubes from property \eqref{it:chessboard} or from statement~\ref{lemma:geometric2_amalg} in Lemma~\ref{lemma:geometric_amalg}.)

The proof for the space $L^\infty(I^d)$ follows a similar pattern as sketched above, with a slightly different method to create the chessboard pattern in a $\xi$-neighbourhood of $\rho$ (this is described in \cite[Lem.~4.9]{DKK2018}). The proof of \cite[Thm.~4.8]{DKK2018} applies almost literally in this case.
\end{proof}

\subsection{\texorpdfstring{Displacement equivalence}{Displacement equivalence}}\label{subs:displacement}

The aim of the present subsection is to prove Propositions~\ref{prop:first_disp} and \ref{p:displacement}.

\begin{proof}[Proof of Proposition~\ref{prop:first_disp}]
	Instead of Proposition~\ref{prop:first_disp}, we prove the following more general statement: 
	\begin{changemargin}{1.5cm}{1.5cm}
	\medskip
	\itshape
	Let $\omega\in\mc{M}$ and $X\subseteq \R^{d}$ be an \jl{$\omega$-irregular} separated net. Let $f\colon X\to \Z^{d}$ be a bijection. Then
	\begin{equation}\label{eq:displacement_grows_gen}
	\lim_{R\to\infty}\sup_{x\in B(\jl{\mb{0}},R)\cap X}\frac{\lnorm{2}{f(x)-x}}{R\omega\left(\frac{1}{R}\right)}=\infty.	
	\end{equation}
	\medskip
	\end{changemargin}
	
Let s stand for the \jl{minimum of the} separation constant\jl{s} of $X$ \jl{and $\Z^{d}$}. \jl{The $\omega$-irregularity} of $X$ implies that for $g=f$ or $g=f^{-1}$ it holds that
\begin{equation}\label{eq:limsup_gen}
\limsup_{R\to\infty}\sup_{x\neq y\in B(\mb{0},R)\cap\dom(g)}\frac{\enorm{g(y)-g(x)}}{R\omega\br*{\frac{\enorm{y-x}}{R}}}=\infty.
\end{equation}
We use \eqref{eq:limsup_gen} and the bound
\[
\frac{\enorm{g(y)-g(x)}}{R\omega\br*{\frac{\enorm{y-x}}{R}}}\leq 
\frac{\enorm{g(y)-y}+\enorm{g(x)-x}}{R\omega\br*{\frac{s}{R}}}+
\frac{\frac{\enorm{y-x}}{R}}{\omega\br*{\frac{\enorm{y-x}}{R}}}.
\] 	
This and Definition~\ref{def:M} imply that the supremum from \eqref{eq:limsup_gen} is bounded above by
\[
\sup_{x\in B(\jl{\mb{0}},R)\cap\dom(g)}\frac{2\enorm{g(x)-x}}{R\omega\br*{\frac{s}{R}}}+1.
\]
\sug{This, together with} \eqref{eq:limsup_gen} \sug{and the concavity of $\omega$}
implies also that for $g=f$ or $g=f^{-1}$ we have
\begin{equation}\label{eq:g_disp_omega_growth}
\limsup_{R\to\infty}\sup_{x\in B(\mb{0},R)\cap\dom(g)}\frac{\enorm{g(x)-x}}{R\omega\br*{\frac{1}{R}}}=\infty.
\end{equation}
Finally, \eqref{eq:g_disp_omega_growth} for $g=f$ 
is just \eqref{eq:displacement_grows_gen}.
So we may assume that \eqref{eq:g_disp_omega_growth} holds for $g=f^{-1}$. For contradiction, assume that \eqref{eq:displacement_grows_gen} does not hold. This means that there is $K>0$ such that for every $x\in X$
\[
\enorm{f(x)-x}\leq K\enorm{x}\omega\br*{\frac{1}{\enorm{x}}}.
\]
Hence, there is $R_0>0$ such that if $\norm{x}\geq R_0$, then $\enorm{f(x)}\geq\enorm{x}/2$. Rewriting this inequality in terms of $f^{-1}$ shows that there is $C>1$ such that for every non-zero $z\in \Z^d$ it holds that $\enorm{f^{-1}(z)}\leq C\enorm{z}$.

Now \eqref{eq:g_disp_omega_growth} for $g=f^{-1}$ yields
\[
\limsup_{R\to\infty}\sup_{z\in B(\mb{0},R)\cap \Z^d}\frac{\enorm{f^{-1}(z)-z}}{R\omega\br*{\frac{1}{R}}}\leq\limsup_{R\to\infty}\sup_{x\in B(\mb{0},CR)\cap X}\frac{\enorm{f(x)-x}}{R\omega\br*{\frac{1}{R}}}=\infty.
\]
The last equality  
is equivalent to \eqref{eq:displacement_grows_gen}; a contradiction.
\end{proof}

As we noted in the introduction, the key part of the proof of Proposition~\ref{p:displacement} is the following lemma.

\begin{lemma}\label{lemma:displacement}
	Let $H$ be a half-space in $\R^d$ with the boundary hyperplane containing $\mb{0}$ and $S$ be a set contained in $H$.
	Additionally, assume that $X\subset S$ is a separated net of $S$.
	Then $X$ can be extended to a separated net $Y$ of $\R^d$ such that $Y$ has a well-defined natural density and $Y\cap S=X$. 
\end{lemma}

\begin{proof}
\jl{We first extend $X$ to a separated net of $H$ arbitrarily (adding only points inside $H\setminus S$). To simplify the notation, we will denote this extension by $X$, too. We also write $s$ for the separation constant of the (extended) set $X$.}
We write
$B_i:=\cl{B}(\mb{0},i)$ and $A_i:=B_i\setminus B_{i-1}$ for $i\in\N$ (we also set $B_0:=\emptyset$). Moreover, we define
\[
\delta_i:=\frac{\abs{X\cap A_i}}{\leb\br*{\jl{H}\cap A_i}}\jl{=\frac{2\abs{X\cap A_i}}{\leb(A_i)}}
\]
 for every $i\in\N$ and $\delta:=\sup_{i\in\N}\delta_i$.
Since $X$ is $s$-separated, $\delta<\infty$.
We will construct a separated net $Y\supset X$ with natural density $\delta$ as $Y:=\bigcup_{i=0}^\infty Y_i$, where $Y_0\subset Y_1\subset\cdots\subset Y_i\subset\cdots$. We set $Y_0:=X$.
We define a \jl{sequence} $(n_i)_{i=1}^\infty$ of non-negative integers where each $n_i$ is chosen as the unique number satisfying
\begin{linenomath}
\begin{equation*}
\delta\leb\br*{B_i}-\abs{X\cap B_i}-\sum_{j=1}^{i-1}n_{j}\geq n_i> \delta\leb\br*{B_i}-\abs{X\cap B_i}-\sum_{j=1}^{i-1}n_{j}-1.
\end{equation*}
\end{linenomath}
Next for every $i\in\N$ we place $n_i$ points inside $A_i\setminus \jl{H}$ (the exact position will be determined later). The set $Y_i$ is then formed by \jl{the union of} $Y_{i-1}$ \jl{and} the $n_i$ points \jl{inside} $A_i\setminus \jl{H}$.
Consequently, for every $i\in\N$ we have
\begin{equation}\label{eq:n_i_in_Y_i_rewritten}
\abs{Y_i\cap B_i}=\abs{Y_{i-1}\cap B_i}+n_i=\abs{Y_{i-1}\cap B_{i-1}}+\abs{A_i\cap X}+n_i
\end{equation}
and
\begin{equation}\label{eq:n_i_in_Y_i}
\delta\leb\br*{B_i}-\abs{Y_{i-1}\cap B_i}\geq n_i> \delta\leb\br*{B_i}-\abs{Y_{i-1}\cap B_i}-1.
\end{equation}

The definition of $\delta_i$ together with \eqref{eq:n_i_in_Y_i} and \eqref{eq:n_i_in_Y_i_rewritten} for all $i\in\N$ yield
\begin{linenomath}	
\begin{align*}
n_i&\leq\delta\leb\br*{B_i}-\delta\leb\br*{B_{i-1}}+1-\delta_i\leb\br*{A_i\cap \jl{H}}\leq \delta\leb\br*{A_i}+1,\\
n_i&\geq\delta\leb\br*{B_i}-\delta\leb\br*{B_{i-1}}-\delta_i\leb\br*{A_i\cap \jl{H}}-1\geq \frac{\delta}{2}\leb\br*{A_i}-1.
\end{align*}
\end{linenomath}
Altogether, we get that
\begin{equation}\label{eq:n_i_asymptotics}
n_i\in\Theta(i^d-(i-1)^d)=\Theta(i^{d-1}).
\end{equation}
Now we specify more precisely how to choose the position of the points in $Y_i\setminus Y_{i-1}$. 
For \jl{the finitely many} $i\in\N$ such that $\partial B_i\setminus B(\jl{H},s)=\emptyset$ we simply place the corresponding $n_i$ points arbitrarily inside $A_i\setminus \jl{H}$\jl{, as this does not affect whether $Y$ is a separated net or not}.
For the remaining indices $i\in\N$ we place the $n_i$ points inside $\partial B_i\setminus B(\jl{H},s)$.
Let $s_i$ denote the separation of $Y_i\setminus Y_{i-1}$.

We will show that the position of the points can be chosen so that $s_i\in\Theta(1)$ and so that $Y_i\setminus Y_{i-1}$ is a $\Theta(1)$-net of $\partial B_i\setminus B(\jl{H},s)$.

\newcommand{\srf}{\zeta^{d-1}}
\begin{claim}\label{claim:sep_net_on_sphere}
Let $A\subseteq\partial B(\mb{0},1)$ be a compact set of positive surface measure. Then, \jl{for every $n\in\N$,} there exist $\sigma(n)\in\Theta(n^{-\frac{1}{d-1}})$ and a set of $n$ points in $A$ such that its separation constant is $\sigma(n)$ and it is a $\sigma(n)$-net of $A$. 
\end{claim}
\begin{proof}
Let $\sigma(n)$ be the maximum separation for a set of $n$ points in $A$.
Let $Z:=\set{z_1,\ldots, z_n}\subset A$ be a set with separation $\sigma(n)$. Then
\[
n\leb(B(\mb{0},\sigma(n)/2))\leq\leb(B(\mb{0},1+\sigma(n)/2))-\leb(B(\mb{0},1-\sigma(n)/2)),
\]
which implies that
\[
\sigma(n)^d\in O\br*{\frac{\sigma(n)}{n}}.
\]
Thus, $\sigma(n)\in O\br*{n^{-\frac{1}{d-1}}}$.

Let $d_i:=\min_{j\neq i}\lnorm{2}{z_i-z_j}$. From now on we will additionally assume that $Z$ is chosen so that it minimises the number of $i\in[n]$ such that $d_i=\sigma(n)$. We will show that $Z$ is a $\sigma(n)$-net of $A$. Assume not. Then there is $y\in A$ such that $\lnorm{2}{y-z_i}>\sigma(n)$ for every $i\in[n]$. Take $i_0\in[n]$ such that $d_{i_0}=\sigma(n)$. Redefining $z_{i_0}$ to $y$ yields either a set that is more than $\sigma(n)$-separated, or one which is still $\sigma(n)$-separated, but the number of indices $i$ with $d_i=\sigma(n)$ has decreased. In both cases we get a contradiction.

Let $\srf$ denote the surface measure on $\partial B(\mb{0},1)$. The above implies that
\[
\sum_{i=1}^n\srf\br*{\jl{\cl{\normalcolor B}}(z_i,\sigma(n))\cap \partial B(\mb{0},1)}\geq\srf(A).
\]
Thus, we get that $\sigma(n)^{d-1}\in \Omega(1/n)$, which finishes the claim.
\end{proof}
We apply the claim above to a copy of $\partial B_i\setminus B(\jl{H},s)$ scaled down by the factor $i$ and then scale it back.
Since $s$ is a constant, we see that $\partial B_i \cap B(H,s)$ occupies asymptotically at most a constant fraction of the measure of $\partial B_i$.
The claim above then implies that the points in $Y_i\setminus Y_{i-1}$ can be chosen so that
\begin{equation}\label{eq:separation_on_sphere}
s_i\in\Theta\br*{i\cdot n_i^{-\frac{1}{d-1}}}.
\end{equation}

Thus, \eqref{eq:n_i_asymptotics} implies that $s_i\in\Theta(1)$. Consequently, $Y_i$ is $\Theta(1)$-separated.
Because the position of the new points in $Y_i$ was chosen inside $\partial B_i\setminus B(\jl{H},s)$
according to Claim~\ref{claim:sep_net_on_sphere}, $Y_i\setminus Y_{i-1}$ is an $s_i$-net of $\partial B_i\setminus B(\jl{H},s)$ for all $i\in\N$ large enough. Therefore, it is a $\Theta(1)$-net of $A_i\setminus \jl{H}$.
Altogether, we get that $Y$ is a separated net of $\R^d$. 

It remains to show that $Y$ has natural density $\delta$. Let $(r_l)_{l\in\N}$ be a sequence \jl{of positive real numbers} going to infinity. For every $l\in\N$ we set $i_l:=\floor{r_l}$. We get that
\begin{linenomath}
\begin{align}
\lim_{l\to\infty}&\frac{\abs{Y\cap\cl{B}(\mb{0},r_l)}}{\leb\br*{\cl{B}(\mb{0},r_l)}}=\nonumber\\
\lim_{l\to\infty}&\frac{\abs{Y\cap\cl{B}(\mb{0},i_l)}}{\leb\br*{\cl{B}(\mb{0},i_l)}}\cdot\lim_{l\to\infty}\frac{\leb\br*{\cl{B}(\mb{0},i_l)}}{\leb\br*{\cl{B}(\mb{0},r_l)}}+\lim_{l\to\infty}&\frac{\abs{Y\cap\br*{\cl{B}(\mb{0},r_l)\setminus \cl{B}(\mb{0},i_l)}}}{\leb\br*{\cl{B}(\mb{0},r_l)}}. \label{eq:three_limits}
\end{align}
\end{linenomath}
The first limit \jl{of \eqref{eq:three_limits}} is $\delta$ by \jl{\eqref{eq:n_i_in_Y_i_rewritten} and} \eqref{eq:n_i_in_Y_i}.
The second limit is obviously equal to $1$.
The term in the third limit is bounded above by a constant multiple (depending on \jl{$d$ and} the separation of $Y$) of the expression
\[
\frac{\leb\br*{\cl{B}(\mb{0},r_l)}-\leb\br*{\cl{B}(\mb{0},r_l-1)}}{\leb\br*{\cl{B}(\mb{0},r_l)}}=\frac{r_l^d-(r_l-1)^d}{r_l^d},
\]
which converges to zero.
\end{proof}

\begin{proof}[Proof of Proposition~\ref{p:displacement}]
\jl{First, we observe that for any two separated nets $Y,Z$ in $\R^d$, a modulus of continuity $\omega$, a homogeneous $\omega$-mapping $f\colon Y\to Z$ and two constants $c, c'>0$ the mapping $\tilde{f}\colon cY\to c'Z$ defined as $\tilde{f}(y):=c'f(y/c)$ for every $y\in cY$ is also a homogeneous $\omega$-mapping; this follows immediately from Definition~\ref{d:hom_omega_map}. This implies that $Y$ is $\omega$-regular if and only if $cY$ is $\omega$-regular for any fixed $c>0$.}
		\jl{Thus, to prove Proposition~\ref{p:displacement},}
		it is enough to verify the existence of a\jl{n $\omega$-irregular} separated net with well-defined natural density (not necessarily equal to one) \jl{for $\omega$ given by Theorem~\ref{thm:main_result_weak}}.
		For this, it suffices to establish that the `every separated net' assumption of Lemma~\ref{lemma:discrete_to_cts} can be weakened to `every separated net with well-defined natural density'.
		The desired statement then follows from this stronger version of Lemma~\ref{lemma:discrete_to_cts} in combination with Theorem~\ref{thm:non_rlz_dsty}.
		
		To prove the aforementioned strengthening of Lemma~\ref{lemma:discrete_to_cts}, we only make more specific two choices left unspecified in the proof of Lemma~\ref{lemma:discrete_to_cts}: \jl{f}irstly, we choose the \jl{cubes} $S_{k}$ so that they are all contained in a half-space \jl{$H$} whose boundary contains $\jl{\mb{0}}$, and secondly, at the sentence labelled \eqref{sentence_extend_net}, we specifically extend the net according to Lemma~\ref{lemma:displacement} outside $\bigcup_{k=1}^\infty S_k$.
		The resulting separated net $X$ then has well-defined natural density, allowing us to apply the weakened hypothesis of Lemma~\ref{lemma:discrete_to_cts} to it.
	\end{proof}
	\section{Geometric properties of homeomorphisms of prescribed modulus of continuity.}\label{section:geometric}
The present section is an extensive refinement of \cite[Sec.~3]{DKK2018} and is based on a construction of Burago and Kleiner in \cite{BK1}. Lemma~\ref{lemma:M1} and Subsection~\ref{subsec:largeness} are entirely new; the remaining proofs follow the structure of their analogues in \cite{DKK2018}. The present construction is dependent on many parameters, whose precise or even asymptotic values were mostly irrelevant in \cite{DKK2018}. On the other hand, in this work it is crucial to analyse the dependence between various parameters; this is the reason why we have to present the constructions here in full detail and cannot only refer to \cite[Sec.~3]{DKK2018}. \arxiv Inside some of the proofs of the present section, parts of the arguments of \cite{DKK2018} transfer without any change. Although it would be possible to refer the reader at these places to the relevant parts of \cite{DKK2018}, we will include these passages here with references for the reader's convenience.
\fi\normalcolor

\paragraph{Notation.} For mappings $h\colon \R^{d}\to\R^{n}$ we denote by $h^{(1)},\ldots,h^{(n)}$ the co-ordinate functions of $h$. For a cube $S\subset\R^d$ we write $\ell(S)$ for its sidelength. In the present section we will often encounter rather complicated calculations. To simplify the expressions, we will use the letter $\Lambda$ for a general purpose constant, that is we write $\Lambda(x,y,z)$, if $\Lambda$ is a positive and finite constant depending only on $x$, $y$ and $z$ whose precise value is irrelevant.
In particular, we allow the value of $\Lambda(x,y,z)$ to change in each occurrence. Often, the parameters $x,y,z$ determining $\Lambda$ will be suppressed.

The objective of this section is to prove Lemma~\ref{lemma:geometric_amalg}, which provided the basis for the proof of the main results in the previous section.
\subsection{A Dichotomy.}
We begin by proving a dichotomy statement for bi-$\omega$-mappings. The dichotomy will be established first in dimension $d=1$ and then extended to higher dimensions by induction.
\begin{restatable}{lemma}{lemmaonedimdich}\label{lemma:1dimdich}
	Let $\omega\in\mc{M}$, \hl{$c,\varepsilon\in (0,a_{\omega})$} and $N\in\N$ with $N\geq 2$. Moreover, let $n\in\N$ and $h\colon [0,c]\to\R^{n}$ be an $\omega$-mapping with $\mathfrak{L}_{\omega}(h)\leq 1$. Then for any values of the parameters $\varphi$ and $M\in\N$ such that
	\begin{linenomath}
	\begin{equation}\label{eq:vph_M}
0\leq\varphi\leq\frac{\varepsilon^{3}}{120} \quad \text{\hl{and}}\quad M\geq\frac{1}{\omega^{-1}\left(\frac{\varepsilon}{4}\right)},
	\end{equation}
	\end{linenomath}
	at least one of the following statements holds: 
	\begin{enumerate}
		\item\label{1dimdich1} There exists a set $\Omega\subset[N-1]$ with $\left|\Omega\right|\geq (1-\varepsilon)(N-1)$ such that for all $i\in\Omega$ and for all $x\in\left[\frac{(i-1)c}{N},\frac{ic}{N}\right]$
		\begin{linenomath}
		\begin{equation*}
		\left\|h\left(x+\frac{c}{N}\right)-h(x)-\frac{1}{N}(h(c)-h(0))\right\|_{2}\leq\varepsilon \omega\left(\frac{c}{N}\right).
		\end{equation*}
		\end{linenomath}
		\item\label{1dimdich2} There exists $z\in\frac{c}{NM}\Z\cap[0,c-\frac{c}{NM}]$ such that
		\begin{linenomath}
		\begin{equation*}
		\frac{\lnorm{2}{h(z+\frac{c}{NM})-h(z)}}{\frac{c}{NM}}>(1+\varphi)\frac{\lnorm{2}{h(c)-h(0)}}{c}.
		\end{equation*}
		\end{linenomath}
	\end{enumerate}
\end{restatable}
\begin{proof}
	Let $M\in\N$ and $\varphi\in(0,1)$ be parameters \hl{satisfying \eqref{eq:vph_M}}. Let $n\in\N$ and $h\colon[0,c]\to\R^{n}$ be an $\omega$-mapping. The assertion of the Lemma holds for $h$ if and only if the assertion holds for $\rho\circ h$, where $\rho\colon\R^{n}\to\R^{n}$ is any distance preserving transformation. Therefore, we may assume that $h(0)=(0,0,\ldots,0)$ and $h(c)=(A,0,\ldots,0)$ where \hl{$A\geq 0$}.	
	
	Assume that the second statement does not hold for $h$. In other words we have that
	\begin{linenomath}
		\begin{equation}\label{eq:not2}
		\frac{\left\|h(x+\frac{c}{NM})-h(x)\right\|_{2}}{\frac{c}{NM}}\leq(1+\varphi)\frac{A}{c}
		\end{equation} 
	\end{linenomath}
	for all $x\in\frac{c}{NM}\Z\cap[0,c-\frac{c}{NM}]$. We complete the proof, by verifying that the first statement holds for $h$.	
	
	We distinguish two cases, namely $A=0$ and $A>0$. In the former we have \hl{$h(c)=\mb{0}$}. Using \eqref{eq:not2}, we get that \hl{$h(z)=\mb{0}$} for every $z\in\frac{c}{NM}\Z\cap [0,c]$.
	For any \hl{$x\in[0,c]$} we can find $z\in\frac{c}{NM}\Z\cap [0,c]$, $z\leq x$, such that $\abs{x-z}\leq\frac{c}{NM}$. This, however, implies that for any \hl{$x\in\left[0,c-\frac{c}{N}\right]$}
\begin{linenomath}
\begin{equation*}
\lnorm{2}{h\left(x+\frac{c}{N}\right)-h(x)-\frac{1}{N}(h(c)-h(0))}\leq 2\omega\left(\frac{c}{NM}\right).
\end{equation*}
\end{linenomath}
	\hl{Applying the condition on $M$ from \eqref{eq:vph_M}} and using the submultiplicativity of $\omega$, we verify that the last quantity is at most $\varepsilon\omega\left(\frac{c}{N}\right)$. Hence statement~\eqref{1dimdich1} holds with $\Omega:=[N-1]$.
	In the remainder of the proof, we assume the second case $A>0$.
\short
Let $S_{i}:=\left[\frac{(i-1)c}{N},\frac{ic}{N}\right]$ for $i\in[N]$, {\hl{$t:=t(\varepsilon)\in(\varphi,1)$}} be some parameter to be determined later in the proof,
	\begin{linenomath}
	\begin{equation*}
		P:=\left\{x\in \frac{c}{NM}\Z\cap\left[0,c-\frac{c}{N}\right]\colon h^{(1)}\left(x+\frac{c}{N}\right)-h^{(1)}(x)>\frac{(1-t)A}{N}\right\}
\end{equation*}
\end{linenomath}
and
\begin{linenomath}
\begin{equation*}
	\Omega:=\left\{i\in[N-1]\colon \frac{c}{NM}\Z\cap S_{i}\subseteq P\right\}.
\end{equation*}
\end{linenomath}
The next part of the proof does not use information about the modulus of continuity of $h$. Therefore, the argument of \cite[\hl{Lem.~3.2}, p.~634]{DKK2018}, which treats the Lipschitz case, may be applied here with absolutely no change to derive 
\begin{linenomath}
	\begin{equation}\label{eq:P}
	\left\|h\left(x+\frac{c}{N}\right)-h(x)-\frac{1}{N}(h(c)-h(0))\right\|_{2}\leq\frac{\sqrt{t^{2}+4t}A}{N}\leq\frac{\sqrt{5t}A}{N}\qquad \forall x\in P,
	\end{equation}
\end{linenomath}
and 
\begin{linenomath}
\begin{equation*}
\left|\Omega\right|\geq \br*{1-\frac{6\varphi}{\varphi+t}}(N-1).
\end{equation*}
\end{linenomath}
{\hl{The first of these inequalities corresponds to \cite[(B.3)]{DKK2018}.}} For any $i\in\Omega$ and $x\in S_{i}$, we can find $x'\in P$ with $\abs{x'-x}\leq c/\jl{(}NM\jl{)}$.
\fi\normalcolor
\arxiv	
	 The next passage of text (approximately one page) is from \cite[Lem.~3.2, p.~634]{DKK2018}. For later use, we point out that \eqref{eq:not2} implies
	\begin{linenomath}
		\begin{equation}\label{eq:lipbdgrid}
		\left\|h(b)-h(a)\right\|_{2}\leq (1+\varphi)\frac{A}{c}\left\|b-a\right\|_{2}
		\end{equation}
	\end{linenomath}
	whenever $a,b\in\frac{c}{NM}\Z\cap[0,c]$. Let $S_{i}=\left[\frac{(i-1)c}{N},\frac{ic}{N}\right]$ for $i\in[N]$, $t:=t(\varepsilon)\in(\varphi,1)$ be some parameter to be determined later in the proof and 
	\begin{linenomath}
		\begin{equation*}
		P:=\left\{x\in \frac{c}{NM}\Z\cap\left[0,c-\frac{c}{N}\right]\colon h^{(1)}\left(x+\frac{c}{N}\right)-h^{(1)}(x)>\frac{(1-t)A}{N}\right\}.
		\end{equation*}
	\end{linenomath}
	For $x\in P$ we have 
	\begin{linenomath}
		\begin{equation*}
		\left|h^{(1)}\left(x+\frac{c}{N}\right)-h^{(1)}(x)-\frac{A}{N}\right|\leq\frac{tA}{N}.
		\end{equation*}
	\end{linenomath}
	This inequality follows from the definition of $P$, the inequality \eqref{eq:lipbdgrid} and $t>\varphi$. For the remaining co-ordinate functions we have
	\begin{linenomath}
		\begin{equation*}
		\sum_{i=2}^{n}\left|h^{(i)}\left(x+\frac{c}{N}\right)-h^{(i)}(x)\right|^{2}\leq \frac{(1+\varphi)^{2}A^{2}}{N^{2}}-\frac{(1-t)^{2}A^{2}}{N^{2}}\leq \frac{4tA^{2}}{N^{2}}.
		\end{equation*}
	\end{linenomath}
	Combining the two inequalities above we deduce
	\begin{linenomath}
		\begin{equation}\label{eq:P}
		\left\|h\left(x+\frac{c}{N}\right)-h(x)-\frac{1}{N}(h(c)-h(0))\right\|_{2}\leq\frac{\sqrt{t^{2}+4t}A}{N}\leq\frac{\sqrt{5t}A}{N}\qquad \forall x\in P.
		\end{equation}
	\end{linenomath}
	Let $\Gamma\subset[0,1]$ be a maximal $c/N$-separated subset of $\frac{c}{NM}\Z\cap\left[0,c-\frac{c}{N}\right]\setminus P$ and let $x_{1},\ldots,x_{\left|\Gamma\right|}$ be the elements of $\Gamma$. Then the intervals $([x_{i},x_{i}+\frac{c}{N}])_{i=1}^{\left|\Gamma\right|}$ can only intersect in the endpoints.
	Therefore the set $[0,c]\setminus \bigcup_{i=1}^{\left|\Gamma\right|}[x_{i},x_{i}+\frac{c}{N}]$ is a finite union of intervals with endpoints in $\frac{c}{NM}\Z\cap[0,c]$ and with total length $c-\frac{\left|\Gamma\right|c}{N}$. Using $\Gamma\cap P=\emptyset$ and \eqref{eq:lipbdgrid} we deduce that
	\begin{linenomath}
		\begin{equation*}
		A=h^{(1)}(c)-h^{(1)}(0)\leq\left|\Gamma\right|\frac{(1-t)A}{N}+(1+\varphi)\frac{A}{c}\left(c-\frac{\left|\Gamma\right|c}{N}\right).
		\end{equation*}
	\end{linenomath}
	Since $A>0$, we may rearrange this inequality to obtain
	\begin{linenomath}
		\begin{equation*}
		\left|\Gamma\right|\leq\frac{\varphi}{\varphi+t}N\leq\frac{2\varphi}{\varphi+t}(N-1),
		\end{equation*}
	\end{linenomath}
	where, for the last inequality, we apply $N\geq 2$. It follows that the set $\frac{c}{NM}\Z\cap[0,c-\frac{c}{N}]\setminus P$ can intersect at most $\frac{6\varphi}{\varphi+t}(N-1)$ intervals $S_{i}$. Letting 
	\begin{linenomath}
		\begin{equation*}
		\Omega:=\left\{i\in[N-1]\colon \frac{c}{NM}\Z\cap S_{i}\subseteq P\right\}
		\end{equation*}
	\end{linenomath}
	we deduce that $\left|\Omega\right|\geq \br*{1-\frac{6\varphi}{\varphi+t}}(N-1)$. Moreover for any $i\in\Omega$ and $x\in S_{i}$, we can find $x'\in P$ with $\left|x'-x\right|\leq c/NM$.
\fi\normalcolor
This allows us to apply \eqref{eq:P} to get
\begin{linenomath}
	\begin{multline*}
		\left\|h\left(x+\frac{c}{N}\right)-h(x)-\frac{1}{N}(h(c)-h(0))\right\|_{2}\\
		\leq\left\|h\left(x'+\frac{c}{N}\right)-h(x')-\frac{1}{N}(h(c)-h(0))\right\|_{2}+2\omega\left(\frac{c}{NM}\right)
		\\ \leq\frac{\sqrt{5t}\omega(c)}{N}+2\omega\left(\frac{c}{NM}\right)
		\leq \left(\sqrt{5t}+2\omega\left(\frac{1}{M}\right)\right)\omega\left(\frac{c}{N}\right).
	\end{multline*}
\end{linenomath}
In the final step, we used the concavity of $\omega$.
Making the choice $t:=\frac{\varepsilon^{2}}{20}$ and applying the bound on $M$ from \eqref{eq:vph_M}, we obtain that the last quantity is at most $\varepsilon \omega\left(\frac{c}{N}\right)$. Finally, the bound on $\varphi$ from \eqref{eq:vph_M} and the choice of $t$ ensure that $\frac{6\varphi}{\varphi +t}<\varepsilon$.
\end{proof} 

\begin{lemma}
\label{lemma:dichotomy}
Let $d\in\N$, $\omega\in\M$ and \hl{$c,\varepsilon\in(0,a_{\omega})$}. Then there exist parameters
\begin{linenomath}
	\begin{equation*}
		\varphi=\varphi(d,\omega,\varepsilon)\hl{\in (0,1)},\quad N_0=N_0(d,\omega,\varepsilon,c)\hl{\geq 1}
	\end{equation*}
\end{linenomath}
such that for all $N\in\N$, $N\geq N_{0}$ there exists a parameter
\begin{linenomath}
\begin{equation*}
M=M(\hl{N},d,\omega,\varepsilon,c)\in\N
\end{equation*}
\end{linenomath}
such that for all $n\geq d$ and all bi-$\omega$-mappings 
\begin{linenomath}
\begin{equation*}
h\colon [0,c]\times[0,c/N]^{d-1}\to\R^{n}\qquad \text{with $\bilipomeg(h)\leq 1$}
\end{equation*} 
\end{linenomath}
at least one of the following statements holds:
\begin{enumerate}
\item\label{dich1} There exists a set $\Omega\subset[N-1]$ with $\left|\Omega\right|\geq(1-\varepsilon)(N-1)$ such that for all $i\in \Omega$ and for all $\mb{x}\in\left[\frac{(i-1)c}{N},\frac{ic}{N}\right]\times\left[0,\frac{c}{N}\right]^{d-1}$
\begin{linenomath}
\begin{equation}\label{eq:translation}
\left\|h\left(\mb{x}+\frac{c}{N}\mb{e}_{1}\right)-h(\mb{x})-\frac{1}{N}(h(c\mb{e}_{1})-h(\mb{0}))\right\|_{2}\leq\varepsilon\omega\left(\frac{c}{N}\right).
\end{equation}
\end{linenomath}
\item\label{dich2} There exists $\mb{z}\in\frac{c}{NM}\Z^{d}\cap([0,c-\frac{c}{NM}]\times[0,\frac{c}{N}-\frac{c}{NM}]^{d-1})$ such that 
\begin{linenomath}
\begin{equation*}
\frac{\left\|h(\mb{z}+\frac{c}{NM}\mb{e}_{1})-h(\mb{z})\right\|_{2}}{\frac{c}{NM}}>(1+\varphi)\frac{\left\|h(c\mb{e}_{1})-h(\mb{0})\right\|_{2}}{c}.
\end{equation*}
\end{linenomath}
\end{enumerate}
\end{lemma}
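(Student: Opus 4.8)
The plan is to reduce the $d$-dimensional dichotomy to the one-dimensional dichotomy of Lemma~\ref{lemma:1dimdich} by slicing the box $[0,c]\times[0,c/N]^{d-1}$ into the segments parallel to $\mb{e}_1$. Fix $\omega\in\M$, $0<c<a_\omega$ and $\varepsilon>0$. For each point $\mb{y}\in[0,c/N]^{d-1}$ consider the segment $t\mapsto \ell_{\mb{y}}(t):=(t,\mb{y})$, $t\in[0,c]$, and the restricted mapping $h_{\mb{y}}:=h\circ\ell_{\mb{y}}\colon[0,c]\to\R^n$. Since $\bilipomeg(h)\leq 1$, each $h_{\mb{y}}$ is an $\omega$-mapping with $\lipomeg(h_{\mb{y}})\leq 1$, so Lemma~\ref{lemma:1dimdich} applies to it with the same $\varepsilon$ (or a suitable fixed fraction of it), a parameter $\varphi=\varphi(d,\omega,\varepsilon)$ and an integer $M=M(d,\omega,\varepsilon,N,c)$ chosen once and for all. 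The point of fixing $\varphi,M$ independently of $\mb{y}$ is exactly that Lemma~\ref{lemma:1dimdich} allows any $\varphi\le\varepsilon^3/120$ and any $M\ge 1/\omega^{-1}(\varepsilon/4)$. So for every $\mb{y}$, either statement~\eqref{1dimdich1} or statement~\eqref{1dimdich2} of Lemma~\ref{lemma:1dimdich} holds for $h_{\mb{y}}$.

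The main case analysis then runs as follows. \emph{Case A:} there exists some $\mb{y}\in\frac{c}{NM}\Z^{d-1}\cap[0,\tfrac{c}{N}-\tfrac{c}{NM}]^{d-1}$ for which the second alternative \eqref{1dimdich2} holds for $h_{\mb{y}}$. Then taking $\mb{z}:=(z,\mb{y})$ with $z$ the witness from \eqref{1dimdich2} immediately gives conclusion~\ref{dich2}, since $h_{\mb{y}}(c)=h(c\mb{e}_1)$ and $h_{\mb{y}}(0)=h(\mb{0})$ when $\mb{y}=\mb{0}$; for general $\mb{y}$ one has to be slightly more careful — the cleanest fix is to only run the one-dimensional dichotomy on the single slice $\mb{y}=\mb{0}$ for the purpose of producing conclusion~\ref{dich2}, and to use the remaining slices only in Case~B. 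That is, I would split first on whether \eqref{1dimdich2} holds for $h_{\mb{0}}$: if it does, we are done via~\ref{dich2}. \emph{Case B:} \eqref{1dimdich2} fails for $h_{\mb{0}}$, hence \eqref{1dimdich1} holds for $h_{\mb{0}}$, giving a large set $\Omega_0\subset[N-1]$ of "good indices" for the central slice. The task is now to upgrade this one-dimensional statement, which controls $h$ only on the segment $\{(\,\cdot\,,\mb{0})\}$, to the full-dimensional estimate~\eqref{eq:translation} valid for all $\mb{x}$ in a slab $[\tfrac{(i-1)c}{N},\tfrac{ic}{N}]\times[0,\tfrac cN]^{d-1}$.

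For that upgrade I would use the $\omega$-continuity of $h$ to transport the estimate off the central axis: for $\mb{x}=(x_1,\mb{x}')$ with $\mb{x}'\in[0,c/N]^{d-1}$ one has $\lnorm{2}{\mb{x}-(x_1,\mb{0})}\le \sqrt{d-1}\,\tfrac cN$, whence
\begin{linenomath}
\begin{equation*}
\Bigl\|h\bigl(\mb{x}+\tfrac cN\mb{e}_1\bigr)-h(\mb{x})-\tfrac1N\bigl(h(c\mb{e}_1)-h(\mb{0})\bigr)\Bigr\|_2
\le \Bigl\|h\bigl(x_1\mb{e}_1+\tfrac cN\mb{e}_1\bigr)-h(x_1\mb{e}_1)-\tfrac1N\bigl(h(c\mb{e}_1)-h(\mb{0})\bigr)\Bigr\|_2 + 2\omega\bigl(\sqrt{d-1}\,\tfrac cN\bigr),
\end{equation*}
\end{linenomath}
and then bound $2\omega(\sqrt{d-1}\,c/N)$ by a constant multiple of $\omega(c/N)$ using concavity (and submultiplicativity) of $\omega$. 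So taking $\varepsilon$ a small enough fixed fraction — say replacing $\varepsilon$ by $\varepsilon/(C(d))$ for an appropriate dimensional constant in the invocation of Lemma~\ref{lemma:1dimdich}, and requiring $N\ge N_0$ large enough that the $\omega(c/N)$ terms combine correctly — the right-hand side is at most $\varepsilon\omega(c/N)$, and $\Omega:=\Omega_0$ witnesses conclusion~\ref{dich1}. The constants fall out as: $N_0$ and $M$ absorb the constant $c$ and the dimension via the requirements of Lemma~\ref{lemma:1dimdich}, while $\varphi$ depends only on $d,\omega,\varepsilon$ because the bound $\varphi\le\varepsilon^3/120$ from the one-dimensional lemma does.

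The main obstacle is bookkeeping rather than conceptual: one must be careful that the single parameter $\varphi$ and the single $M$ work simultaneously for the relevant slices (here I sidestep this by only using the $\mb{y}=\mb{0}$ slice, which is legitimate because conclusion~\ref{dich1} is a statement purely about translation along $\mb{e}_1$ and the off-axis points are handled by continuity, not by the one-dimensional dichotomy), and that the loss incurred when passing from the axis to a general point of the slab — a term of order $\omega(c/N)$ with a dimensional constant — can be reabsorbed into the error $\varepsilon\omega(c/N)$ by shrinking $\varepsilon$ by a dimensional factor before applying Lemma~\ref{lemma:1dimdich}. One should double-check the edge effects near $x_1=c-c/N$ (so that $x_1+c/N$ stays in $[0,c]$) and that the grid $\frac{c}{NM}\Z^d$ in conclusion~\ref{dich2} matches the grid produced by the one-dimensional lemma on the central slice, which it does by construction of $M$.
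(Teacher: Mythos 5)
Your reduction to the one--dimensional lemma via the single central slice $\mb{y}=\mb{0}$ has a genuine gap in Case~B, at the step where you ``transport the estimate off the central axis''. For a general point $\mb{x}=(x_1,\mb{x}')$ of the slab, the transverse distance $\lnorm{2}{\mb{x}-(x_1,\mb{0})}$ can be as large as $\sqrt{d-1}\,\frac{c}{N}$, i.e.\ \emph{comparable to the step length} $c/N$. The resulting error $2\omega\bigl(\sqrt{d-1}\,\frac{c}{N}\bigr)$ is indeed at most $2\lceil\sqrt{d-1}\rceil\,\omega\bigl(\frac{c}{N}\bigr)$ by concavity, but this is a fixed constant ($\geq 2$) times $\omega(c/N)$, whereas the conclusion requires a bound of $\varepsilon\,\omega(c/N)$ with $\varepsilon$ arbitrarily small. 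Shrinking the tolerance fed into Lemma~\ref{lemma:1dimdich} only shrinks the \emph{on-axis} error; the transport term is independent of that choice and dominates. So the final estimate you obtain is of the form $\bigl(\varepsilon' + 2\lceil\sqrt{d-1}\rceil\bigr)\omega(c/N)$, which never falls below $\varepsilon\,\omega(c/N)$ for small $\varepsilon$. Knowledge of $h$ on a single line simply cannot control $h$ on a $d$-dimensional slab whose transverse extent equals the longitudinal step, because the modulus of continuity produces errors of the same order as the quantity being estimated.

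The missing idea is that the good point $\mb{x}'$ to which one transports must lie at distance $o(c/N)$ from $\mb{x}$ (with the $o(\cdot)$ controlled by $\varepsilon$), and for that one needs the set of points satisfying the translation estimate to be \emph{metrically dense at scale $o(c/N)$ throughout the slab}, not just present on one axis. The paper achieves this by induction on $d$: the inductive hypothesis (with a smaller tolerance $\theta=\theta(d,\omega,\varepsilon)$) is applied to \emph{every} slice $h\wedge s$, $s\in[0,c/N]$; if alternative~2 ever occurs it is promoted to conclusion~\ref{dich2} using the bi-$\omega$ lower bound on $h$ to compare the slice baseline $\lnorm{2}{h\wedge s(c\mb{e}_1^{d-1})-h\wedge s(\mb{0}^{d-1})}$ with the global baseline $\lnorm{2}{h(c\mb{e}_1)-h(\mb{0})}$ (the only place where bi-$\omega$, rather than mere $\omega$-continuity, is used — a point your single-slice shortcut also avoids confronting); otherwise a Fubini argument shows the good set $A$ has measure $\geq(1-\theta)\leb(R)$, so every point of a good slab is within distance $2\sqrt d\,\theta^{1/2d}c/N$ of $A$, and this distance is $o(c/N)$ as $\theta\to 0$. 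Your plan as written cannot be repaired by bookkeeping alone; it needs either this induction-plus-Fubini structure or an equivalent mechanism applying the one-dimensional dichotomy to a full-measure family of slices.
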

\begin{proof}	
	In this proof we will sometimes add the superscript $d$ or $d-1$ to objects such as the Lebesgue measure $\leb$ or vectors $\mb{e}_{i}$, $\mb{0}$ in order to emphasise the dimension of the Euclidean space to which they correspond. For $d\geq 2$, we will express points in $\R^{d}$ in the form $\mb{x}=(x_{1},x_{2},\ldots,x_{d})$. Given $\mb{x}=(x_{1},\ldots,x_{d})\in \R^{d}$ and $s\in\R$ we let
	\begin{linenomath}
		\begin{equation*}
		\mb{x}\wedge s:=(x_{1},\ldots,x_{d},s)
		\end{equation*}
	\end{linenomath}
	denote the point in $\R^{d+1}$ formed by concatenation of $\mb{x}$ and $s$.
	
	The case $d=1$ is dealt with by Lemma~\ref{lemma:1dimdich}. Let $d\geq 2$ and suppose that the statement of the lemma holds when $d$ is replaced with $d-1$.	
	We define an additional parameter $\theta:=\theta(d, \omega,\varepsilon)$ whose precise value will be specified later.
	Given $\omega$, $\varepsilon$ and $c>0$ we let $\varphi:=\varphi(d,\omega,\varepsilon)\in(0,1)$ and $N_{0}(d,\varepsilon):=N_{0}(d,\omega,\varepsilon,c)\in\N$ be parameters on which we impose various conditions in the course of the proof.
	For now, we just prescribe that $0<\varphi<\frac{1}{2}\varphi(d-1,\omega,\theta(d,\omega,\varepsilon))$
	and $N_{0}(d,\varepsilon)\geq N_{0}(d-1,\theta(d,\omega,\varepsilon))$.

	Let $N\geq N_{0}(d,\varepsilon)$ and $M:=M(N,d,\omega,\varepsilon,c)$ be a parameter to be determined later. For $M_{d-1}:=M(N,d-1,\omega,\theta(d,\omega,\varepsilon),c)$ we prescribe first that $M\in M_{d-1}\Z$, so that $\frac{c}{NM_{d-1}}\Z\subseteq\frac{c}{NM}\Z$.
	
	Let $n\geq d$ and $h\colon [0,c]\times [0,c/N]^{d-1}\to\R^{n}$ be a bi-$\omega$-mapping with $\bilipomeg(h)\leq 1$. For each $s\in[0,c/N]$ the mapping $h\wedge s\colon [0,c]\times[0,c/N]^{d-2}\to\R^{n}$ defined by
	\begin{linenomath}
		\begin{equation*}
		h\wedge s(\mb{x}):=h(\mb{x}\wedge s)=h(x_{1},x_{2},\ldots,x_{d-1},s).
		\end{equation*} 
	\end{linenomath}
\hl{is a bi-$\omega$-mapping with $\bilipomeg(h\wedge s)\leq \bilipomeg(h)\leq 1$. This is straightforward to verify.
	Thus, for each $s\in [0,c/N]$ we may apply the induction hypothesis to $h\wedge s$.}	
	For each $s\in[0,c/N]$ we get that at least one of the following statements holds:
	\begin{itemize}
		\item[(\mylabel{dich1ind}{1$_{s}$})]There exists a set $\Omega_{s}\subset[N-1]$ with $\left|\Omega_{s}\right|\geq (1-\theta)(N-1)$ such that for all $i\in\Omega_{s}$ and $\mb{x}\in \left[\frac{(i-1)c}{N},\frac{ic}{N}\right]\times\left[0,\frac{c}{N}\right]^{d-2}$ it holds that
		\begin{linenomath}
			\begin{multline*}
			\left\|h\wedge s\left(\mb{x}+\frac{c}{N}\mb{e}_{1}^{d-1}\right)-h\wedge s(\mb{x})-\frac{1}{N}\br*{h\wedge s\bigr(c\mb{e}_{1}^{d-1}\bigl)-h\wedge s\bigr(\mb{0}^{d-1}\bigl)}\right\|_{2}\\
			\leq\theta\omega\left(\frac{c}{N}\right).
			\end{multline*}
		\end{linenomath}
		\item[(\mylabel{dich2ind}{2$_{s}$})] There exists $\mb{z}_{s}\in\frac{c}{NM_{d-1}}\Z^{d-1}\cap\bigr([0,c-\frac{c}{NM_{d-1}}]\times[0,\frac{c}{N}-\frac{c}{NM_{d-1}}]^{d-2}\bigl)$ such that
		\begin{linenomath}
			\begin{multline*}
			\frac{\left\|h\wedge s\br*{\mb{z}_{s}+\frac{c}{NM_{d-1}}\mb{e}_{1}^{d-1}}-h\wedge s(\mb{z}_{s})\right\|_{2}}{\frac{c}{NM_{d-1}}}\\
			>(1+2\varphi)\frac{\left\|h\wedge s\br*{c\mb{e}_{1}^{d-1}}-h\wedge s\br*{\mb{0}^{d-1}}\right\|_{2}}{c}.
			\end{multline*}
		\end{linenomath}
	\end{itemize}

	Suppose first that statement~\eqref{dich2ind} holds for some $s\in [0,c/N]$. We will show that statement~\ref{dich2} holds for $h$. Choose a number $s'\in\frac{c}{NM}\Z\cap[0,\frac{c}{N}-\frac{c}{NM}]$ with $s'\leq s$ and $\left|s'-s\right|\leq \frac{c}{NM}$.
	Setting $\mb{w}=\mb{z}_{s}\wedge s'$ we note that $\mb{w}$ is an element of $\frac{c}{NM}\Z^{d}\cap\hl{\Bigl(}[0,c-\frac{c}{NM_{d-1}}]\times[0,\frac{c}{N}-\frac{c}{NM}]^{d-1}\hl{\Bigr)}$, $\left\|\mb{w}-\mb{z}_{s}\wedge s\right\|_{2}\leq\frac{c}{NM}$ and $\lnorm{2}{h\wedge s(c\mb{e}_{1}^{d-1})-h\wedge s(\mb{0}^{d-1})}\geq\lnorm{2}{h(c\mb{e}_{1}^{d})-h(\mb{0}^{d})}-2\omega\left(\frac{c}{N}\right)$. We use these inequalities and the inequality of~\eqref{dich2ind} to derive
	\begin{linenomath}
		\begin{align*}
		&\left\|h\left(\mb{w}+\frac{c}{NM_{d-1}}\mb{e}_{1}^{d}\right)-h(\mb{w})\right\|_{2}\\
		&\geq\left\|h\wedge s\left(\mb{z}_{s}+\frac{c}{NM_{d-1}}\mb{e}_{1}^{d-1}\right)-h\wedge s(\mb{z}_{s})\right\|_{2}-2\omega\left(\frac{c}{NM}\right)\\	
		&> (1+2\varphi)\left(\frac{\left\|h(c\mb{e}_{1}^{d})-h(\mb{0}^{d})\right\|_{2}-2\omega\left(\frac{c}{N}\right)}{c}\right)\frac{c}{NM_{d-1}}-2\omega\left(\frac{c}{NM}\right)\\
		&\geq\left(1+2\varphi-\frac{2(1+2\varphi)\omega\left(\frac{c}{N}\right)}{\lnorm{2}{h(c\mb{e}_{1}^{d})-h(\mb{0}^{d})}}-\frac{2\omega\left(\frac{c}{NM}\right)NM_{d-1}}{\lnorm{2}{h(c\mb{e}_{1}^{d})-h(\mb{0}^{d})}}\right)\frac{\left\|h(c\mb{e}_{1}^{d})-h(\mb{0}^{d})\right\|_{2}}{NM_{d-1}}\\
		&\geq\left(1+2\varphi-\frac{4\omega\left(\frac{c}{N_{0}(d,\varepsilon)}\right)}{\omega^{-1}(c)}-\frac{2\omega\left(\frac{c}{NM}\right)NM_{d-1}}{\omega^{-1}(c)}\right)\frac{\left\|h(c\mb{e}_{1}^{d})-h(\mb{0}^{d})\right\|_{2}}{NM_{d-1}}\\
		&>(1+\varphi)\frac{\left\|h(c\mb{e}_{1}^{d})-h(\mb{0}^{d})\right\|_{2}}{NM_{d-1}}. 	
		\end{align*}
	\end{linenomath}
	To deduce the fourth inequality in the sequence above we use \hl{that $\varphi\in (0,1/2)$ and} the $\omega^{-1}$-bound on $h$. In fact, this is the only place in the proof of Lemma~\ref{lemma:dichotomy} where we use that the mapping $h$ is bi-$\omega$-continuous and not just $\omega$-continuous. The final inequality is ensured by taking $N_{0}(d,\varepsilon)$ and $M$ sufficiently large. 
Specifically, using the submultiplicativity of $\omega$, it is sufficient to take
\begin{linenomath}
\begin{equation*}
N_{0}\geq \frac{1}{\omega^{-1}\left(\frac{\varphi\omega^{-1}(c)}{8\omega(c)}\right)},\qquad M\geq \frac{1}{\omega^{-1}\left(\frac{1}{NM_{d-1}}\right)}.
\end{equation*}
\end{linenomath}

	From the final inequality obtained for $\left\|h\left(\mb{w}+\frac{c}{NM_{d-1}}\mb{e}_{1}^{d}\right)-h(\mb{w})\right\|_{2}$ it follows that there exists $i\in\left[\frac{M}{M_{d-1}}\right]$ so that the point $\mb{z}:=\mb{w}+\frac{(i-1)c}{NM}\mb{e}_{1}^{d}$ verifies statement~\ref{dich2} for $h$.
	
	We may now assume that the first statement~\eqref{dich1ind} holds for all $s\in[0,c/N]$. We complete the proof by verifying statement \ref{dich1} for $h$. Whenever $\mb{x}\in[0,c]\times[0,c/N]^{d-2}$ and $s\in[0,c/N]$ satisfy the inequality of~\eqref{dich1ind} we have that
	\begin{linenomath}
		\begin{equation}\label{eq:gdpts}
		\left\|h\br*{(\hl{\mb{x}}\wedge s)+\frac{c}{N}\mb{e}_{1}}-h(\hl{\mb{x}}\wedge s)-\frac{1}{N}(h(c\mb{e}_{1})-h(\mb{0}))\right\|\leq\theta\omega\left(\frac{c}{N}\right)+\frac{2\omega\left(\frac{c}{N}\right)}{N}.
		\end{equation}
	\end{linenomath}

\short
Let $R:=\left[0,c-\frac{c}{N}\right]\times\left[0,\frac{c}{N}\right]^{d-1}$,
\begin{linenomath}
	\begin{equation*}
		A:=\left\{\mb{x}\in R\colon \mb{x}\text{ satisfies \eqref{eq:translation} with $\varepsilon=\theta+\frac{2}{N}$}\right\},
	\end{equation*}
\end{linenomath}
$S_{i}:=\left[\frac{(i-1)c}{N},\frac{ic}{N}\right]\times\left[0,\frac{c}{N}\right]^{d-1}$ for $i\in\hl{[N-1]}$ and
\begin{linenomath}
\begin{equation*}
	\Omega:=\left\{i\in[N-1]\colon \leb^{d}(A\cap S_{i})\geq (1-\sqrt{\theta})\leb^{d}(S_{i})\right\}.
\end{equation*}
\end{linenomath}
The next part of the proof uses a Fubini theorem argument which does not require any information about the modulus of continuity of the mapping $h$. Therefore, this part of the proof is identical to the Lipschitz case and the Fubini theorem argument of \cite[p.~637]{DKK2018} now applies without any modification to obtain
\begin{linenomath}
	\begin{equation*}
		\frac{\left|\Omega\right|}{N-1}\geq (1-\sqrt{\theta})\geq 1-\varepsilon,
	\end{equation*}
\end{linenomath}
where for the last inequality we require $\theta\leq\varepsilon^{2}$. 

Fix $i\in\Omega$ and $\mb{x}\in S_{i}$. Then for any cube $Q\subseteq S_{i}$ with sidelength $(2\sqrt{\theta}\leb^{d}(S_{i}))^{\frac{1}{d}}$ we have $A\cap Q\neq \emptyset$. Therefore, 
\fi\normalcolor
\arxiv
	The next passage of text (approximately one page) is from \cite[p.~636--637]{DKK2018}. Let $R:=\left[0,c-\frac{c}{N}\right]\times\left[0,\frac{c}{N}\right]^{d-1}$ and
	\begin{linenomath}
		\begin{equation*}
		A:=\left\{\mb{x}\in R\colon \mb{x}\text{ satisfies \eqref{eq:translation} with $\varepsilon=\theta+\frac{2}{N}$}\right\}.
		\end{equation*}
	\end{linenomath}
	Using \eqref{eq:gdpts} and the fact that statement~\eqref{dich1ind} holds for every $s\in[0,c/N]$ we deduce	
	\begin{linenomath}
		\begin{equation*}
		\leb^{d-1}(A\cap \left\{\mb{x}\colon x_{d}=s\right\})\geq (1-\theta)\leb^{d-1}(R\cap \left\{\mb{x}\colon x_{d}=s\right\})\qquad \text{for all $s\in[0,c/N]$}.
		\end{equation*}
	\end{linenomath}
	Therefore, by Fubini's theorem,
	\begin{linenomath}
		\begin{equation*}
		\leb^{d}(A)\geq(1-\theta)\leb^{d}(R).
		\end{equation*}
	\end{linenomath}
	For each $i\in[N-1]$ we let $S_{i}:=\left[\frac{(i-1)c}{N},\frac{ic}{N}\right]\times\left[0,\frac{c}{N}\right]^{d-1}$, define
	\begin{linenomath}
		\begin{equation*}
		\Omega:=\left\{i\in[N-1]\colon \leb^{d}(A\cap S_{i})\geq (1-\sqrt{\theta})\leb^{d}(S_{i})\right\}
		\end{equation*}
	\end{linenomath}
	and observe that
	\begin{linenomath}
		\begin{align*}
		\leb^{d}(A)\leq \left|\Omega\right|\frac{\leb^{d}(R)}{N-1}+(N-1-\left|\Omega\right|)(1-\sqrt{\theta})\frac{\leb^{d}(R)}{N-1}.
		\end{align*}
	\end{linenomath}
	Combining the two inequalities derived above for $\leb^{d}(A)$ and requiring $\theta\leq\varepsilon^2$, we deduce
	\begin{linenomath}
		\begin{equation*}
		\frac{\left|\Omega\right|}{N-1}\geq (1-\sqrt{\theta})\geq 1-\varepsilon. 
		\end{equation*}
	\end{linenomath}
	Moreover, for any $i\in\Omega$ and any cube $Q\subseteq S_{i}$ with sidelength $(2\sqrt{\theta}\leb^{d}(S_{i}))^{\frac{1}{d}}$ we have $A\cap Q\neq \emptyset$. Therefore, for any $i\in\Omega$ and any $\mb{x}\in S_{i}$
\fi\normalcolor
we can find $\mb{x}'\in A\cap S_{i}$ with
\begin{linenomath}
	\begin{equation*}
		\left\|\mb{x}'-\mb{x}\right\|_{2}\leq \sqrt{d}(2\sqrt{\theta}\leb^{d}(S_{i}))^{\frac{1}{d}}\leq\frac{2\sqrt{d}\theta^{1/2d}c}{N}.
	\end{equation*}
\end{linenomath}
Using this approximation, we obtain
\begin{linenomath}
	\begin{multline*}
		\left\|h\left(\mb{x}+\frac{c}{N}\mb{e}_{1}\right)-h(\mb{x})-\frac{1}{N}(h(c\mb{e}_{1})-h(\mb{0}))\right\|_{2}\\
		\leq \left\|h\left(\mb{x}+\frac{c}{N}\mb{e}_{1}\right)-h\left(\mb{x}'+\frac{c}{N}\mb{e}_{1}\right)\right\|_{2}\\+\left\|h\left(\mb{x}'+\frac{c}{N}\mb{e}_{1}\right)-h(\mb{x}')-\frac{1}{N}(h(c\mb{e}_{1})-h(\mb{0}))\right\|_{2}+\left\|h(\mb{x}')-h(\mb{x})\right\|_{2}\\
		\leq 2\omega\left(\frac{2\sqrt{d}\theta^{1/2d}c}{N}\right)+\left(\theta+\frac{2}{N}\right)\omega\left(\frac{c}{N}\right)\\
		\leq\left(2\omega\left(2\sqrt{d}\theta^{1/2d}\right)+\theta+\frac{2}{N_{0}(d,\varepsilon)}\right)\omega\left(\frac{c}{N}\right)\leq \varepsilon\omega\left(\frac{c}{N}\right),
	\end{multline*}
\end{linenomath}
where the final inequality is satisfied by taking $N_0(d,\varepsilon)\geq\frac{6}{\varepsilon}$
and
\begin{linenomath}
\begin{equation}\label{eq:theta_bound}
	\theta(d,\omega, \varepsilon)\leq \left(\frac{\omega^{-1}\br*{\frac{1}{6}}\omega^{-1}\br*{\varepsilon}}{2\sqrt{d}}\right)^{2d}
\end{equation}
\end{linenomath}
This choice of $\theta$ also satisfies the requirement $\theta\leq\varepsilon^2$ imposed before.
\end{proof}

\subsection{Iterating Lemma~\ref{lemma:dichotomy}}
In this subsection we identify a certain subfamily $\M_{0}\subseteq\M$ with the property that for any modulus $\omega\in\M_{0}$ we may iterate Lemma~\ref{lemma:dichotomy} a controlled number of times in order to eliminate conclusion~\ref{dich2} of the dichotomy. In the next subsection we verify that the subfamily $\M_{0}$ contains all moduli of the form $\omega(t)=t\left(\log\frac{1}{t}\right)^{\alpha}$.

\begin{define}[The family $\mc{M}_0$]\label{def_par_subfam}
We use Lemma~\ref{lemma:dichotomy} to generate sequences of parameters. Given $d\in\N$, $\omega\in\mc{M}$ and \hl{$c,\varepsilon\in (0,a_{\omega})$} we define sequences $(N_{i})_{i=1}^{\infty}$, $(M_{i})_{i=1}^{\infty}$ and $(c_{i})_{i=1}^{\infty}$ by 
\begin{linenomath}
\begin{multline}\label{eq:def_parameters}
N_{1}:= N_{0}(d,\omega,\varepsilon,c),\qquad N_{i}:= N_{0}\left(d,\omega,\varepsilon,c_{i}\right),\quad i\geq 2,\\
c_{1}:=c,\qquad c_{i}:=\frac{c_{i-1}}{N_{i-1}M_{i-1}},\quad i\geq 2,\quad \text{and }\\ M_{\hl{i}}:=M_{0}(d,\omega,\varepsilon,c_{\hl{i}}):=M(N_{i},d,\omega,\varepsilon,c_{\hl{i}})\in\N,\qquad \hl{i}\geq 1.
\end{multline}
\end{linenomath}

Let $\M_{0}$ be defined as the family of all moduli $\omega\in\M$ for which the following condition holds: \jl{f}or any $d\in\N$, $c\in(0,a_{\omega})$ and $\varepsilon\in(0,1)$ there exists $r:=r(d,\omega,\varepsilon,c)\in\N$ such that for the parameter $\varphi=\varphi(d,\omega,\varepsilon)$ of Lemma~\ref{lemma:dichotomy} and the parameter sequence $(c_{i})_{i=1}^{\infty}$ defined in the paragraph above, we have
\begin{linenomath}
\begin{equation}\label{eq:num_iter}
\frac{(1+\varphi)^{r}\omega^{-1}(c)}{c}\geq \frac{\omega(c_{r+1})}{c_{r+1}}.
\end{equation}
\end{linenomath}
\end{define}

	\begin{lemma}\label{lemma:terminate}
    Let $d\in\N$, $\omega\in\M_{0}$, \hl{$c,\varepsilon\in(0,a_{\omega})$}, $n\geq d$ and $g\colon[0,c]\times[0,c/N]^{d-1}\to\R^{n}$ be a bi-$\omega$-mapping with $\bilipomeg(g)\leq 1$. Let the parameters $(c_{i})_{i=1}^{\infty}$, $(N_{i})_{i=1}^{\infty}$, $(M_{i})_{i=1}^{\infty}$ and $r$ be defined according to Definition~\ref{def_par_subfam}. Then there exists $p\in[r]$ and
	\begin{linenomath}
		\begin{equation*}
		\mb{z}_{1}=\mb{0},\quad \mb{z}_{i+1}\in c_{i+1}\Z^{d}\cap\hl{\Biggl(}[0,c_{i}-c_{i+1}]\times\left[0,\frac{c_{i}}{N_{\hl{i}}}-c_{i+1}\right]^{d-1}\hl{\Biggr)}\hl{,}\quad i\in[p-1],
		\end{equation*} 
	\end{linenomath}
	such that statement~\ref{dich1} of Lemma~\ref{lemma:dichotomy} is valid for the mapping $g_{p}\colon[0,c_{p}]\times[0,c_{p}/N_{p}]^{d-1}\to\R^{n}$ defined by
	\begin{linenomath}
		\begin{equation}\label{eq:g_p}
		g_{p}(\mb{x}):=g\br*{\mb{x}+\sum_{i=1}^{p}\mb{z}_{i}}.
		\end{equation}	
	\end{linenomath}
	\end{lemma}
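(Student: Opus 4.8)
The plan is to iterate Lemma~\ref{lemma:dichotomy}, applying it to a shrinking nested sequence of boxes, and to exploit the defining property of $\M_0$ to see that conclusion~\ref{dich2} of the dichotomy cannot be returned on $r$ consecutive iterations; hence after at most $r$ steps conclusion~\ref{dich1} must appear.

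Concretely, I would set $g_1:=g$, which is a bi-$\omega$-mapping on $[0,c_1]\times[0,c_1/N_1]^{d-1}$ with $\bilipomeg(g_1)\leq 1$ (here $c_1=c$ and $N_1=N_0(d,\omega,\varepsilon,c)$ as in Definition~\ref{def_par_subfam}). Assume inductively that $g_i\colon[0,c_i]\times[0,c_i/N_i]^{d-1}\to\R^n$ has been constructed with $\bilipomeg(g_i)\leq 1$, and apply Lemma~\ref{lemma:dichotomy} with data $d,\omega,\varepsilon,c_i$ and the admissible choice $N=N_i=N_0(d,\omega,\varepsilon,c_i)$; the companion parameter is then $M_i=M(N_i,d,\omega,\varepsilon,c_i)$, and the parameter $\varphi=\varphi(d,\omega,\varepsilon)$ is the same at every level since it does not depend on $c$. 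If conclusion~\ref{dich1} holds for $g_i$, stop and set $p:=i$. Otherwise conclusion~\ref{dich2} supplies a point $\mb{z}_{i+1}\in c_{i+1}\Z^d\cap([0,c_i-c_{i+1}]\times[0,c_i/N_i-c_{i+1}]^{d-1})$, where $c_{i+1}=c_i/(N_iM_i)$ as in Definition~\ref{def_par_subfam}, and I define $g_{i+1}(\mb{x}):=g_i(\mb{x}+\mb{z}_{i+1})=g(\mb{x}+\sum_{j=1}^{i+1}\mb{z}_j)$ with $\mb{z}_1:=\mb{0}$. A one-line check gives $\mb{z}_{i+1}+[0,c_{i+1}]\times[0,c_{i+1}/N_{i+1}]^{d-1}\subseteq[0,c_i]\times[0,c_i/N_i]^{d-1}$ (using $c_{i+1}/N_{i+1}\leq c_{i+1}$ in the last $d-1$ coordinates), so $g_{i+1}$ is well defined on the required box; being a translate of a restriction of $g_i$, it is a bi-$\omega$-mapping with $\bilipomeg(g_{i+1})\leq 1$, so the recursion is legitimate.

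Now suppose, towards a contradiction, that conclusion~\ref{dich2} is returned at every step $i=1,\dots,r$, so that $g_1,\dots,g_{r+1}$ all get produced. Write $D_i:=\lnorm{2}{g_i(c_i\mb{e}_1)-g_i(\mb{0})}/c_i$. Since $g_{i+1}(\mb{0})=g_i(\mb{z}_{i+1})$ and $g_{i+1}(c_{i+1}\mb{e}_1)=g_i(\mb{z}_{i+1}+c_{i+1}\mb{e}_1)$, the inequality of conclusion~\ref{dich2} for $g_i$ reads precisely $D_{i+1}>(1+\varphi)D_i$, and telescoping yields $D_{r+1}>(1+\varphi)^rD_1$. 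On one hand $\bilipomeg(g)\leq 1$ and $c<a_\omega$ force $\lnorm{2}{g(c\mb{e}_1)-g(\mb{0})}\geq\omega^{-1}(c)$, so $D_1\geq\omega^{-1}(c)/c$; on the other hand $\lipomeg(g_{r+1})\leq 1$ and $c_{r+1}\leq c<a_\omega$ give $\lnorm{2}{g_{r+1}(c_{r+1}\mb{e}_1)-g_{r+1}(\mb{0})}\leq\omega(c_{r+1})$, so $D_{r+1}\leq\omega(c_{r+1})/c_{r+1}$. Combining these, $\omega(c_{r+1})/c_{r+1}>(1+\varphi)^r\omega^{-1}(c)/c$, contradicting~\eqref{eq:num_iter}. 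Hence conclusion~\ref{dich1} must hold for $g_p$ for some $p\in[r]$, and then $\mb{z}_1=\mb{0},\mb{z}_2,\dots,\mb{z}_p$ together with $g_p(\mb{x})=g(\mb{x}+\sum_{i=1}^p\mb{z}_i)$ are exactly as in~\eqref{eq:g_p}, completing the proof.

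I do not expect a genuine obstacle here: modulo Lemma~\ref{lemma:dichotomy} the argument is pure bookkeeping. The two points requiring care are (i) checking that the boxes $[0,c_i]\times[0,c_i/N_i]^{d-1}$ really do nest, so that each $g_{i+1}$ is defined on the full box $[0,c_{i+1}]\times[0,c_{i+1}/N_{i+1}]^{d-1}$, and (ii) matching the parameters $N_i,M_i,c_i,\varphi,r$ generated by the iterated dichotomy with the ones frozen in Definition~\ref{def_par_subfam}, so that the telescoped inequality $D_{r+1}>(1+\varphi)^rD_1$ can be confronted directly with~\eqref{eq:num_iter}; the bound $\lnorm{2}{g(c\mb{e}_1)-g(\mb{0})}\geq\omega^{-1}(c)$ is the one place where the bi-$\omega$ (rather than merely $\omega$) hypothesis on $g$ enters explicitly.
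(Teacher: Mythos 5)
Your proposal is correct and follows essentially the same route as the paper: iterate Lemma~\ref{lemma:dichotomy} on the nested boxes, translate by the point supplied by conclusion~\ref{dich2} whenever conclusion~\ref{dich1} fails, and telescope the resulting inequality $D_{i+1}>(1+\varphi)D_i$ against the bounds $D_1\geq\omega^{-1}(c)/c$ (from $\omega$-continuity of $g^{-1}$) and $D_{r+1}\leq\omega(c_{r+1})/c_{r+1}$ to contradict~\eqref{eq:num_iter}. The paper phrases this as an explicit algorithm but the substance, including the nesting check and the parameter bookkeeping, is identical to yours.
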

\short \begin{proof} To prove Lemma~\ref{lemma:terminate} we first apply Lemma~\ref{lemma:dichotomy} to $g$. If statement~\ref{dich1} of the dichotomy holds we stop. Otherwise we find via statement~\ref{dich2} of the dichotomy a $\frac{1}{N_{1}M_{1}}$-times smaller copy of the original cuboid $[0,c]\times[0,c/N_{1}]^{d-1}$ on which $g$ stretches the basepoints corresponding to $c\mb{e}_{1}$ and $\mb{0}$ by a factor $(1+\varphi)$-times more than their analogues in the original cuboid. We now apply Lemma~\ref{lemma:dichotomy} to $g$ restricted to this smaller cuboid, that is, to the mapping denoted by $g_{2}$ in the statement of the present lemma, and repeat the procedure described above. We iterate this procedure as many times as possible. Now we argue that this algorithm terminates after at most a finite number $r$ of iterations, where $r\in\N$ is given by Definition~\ref{def_par_subfam}. Otherwise, after $(r+1)$ iterations we would find a cuboid of length $c_{r+1}$ for which $g$ stretches the basepoints by a factor $(1+\varphi)^{r}$ more than it stretches $\mb{0}$ and $c\mb{e}_{1}$. This however is incompatible with $\lipomeg(g)\leq 1$ and \eqref{eq:num_iter}.
	
	We do not provide a more formal proof of Lemma~\ref{lemma:terminate} here because the argument is a simple modification of \cite[Proof of Lem.~3.5, p.~638]{DKK2018}. Algorithm~B.1 of \cite[p.~638]{DKK2018} may be applied here exactly as it is written and then the remaining argument of \cite[p.~638]{DKK2018} requires only trivial changes such as replacing Lipschitz estimates with $\omega$-continuity estimates.\end{proof}
	\fi\normalcolor
	\arxiv\begin{proof}
	The proof is a simple modification of \cite[Proof of Lem.~3.5, p.~638]{DKK2018}. 
	 Let $\varphi:=\varphi(d,\omega,\varepsilon)$ be given by the conclusion of Lemma~\ref{lemma:dichotomy}. We implement the following algorithm.
	\begin{algorithm}[{\cite[Alg.~B.1]{DKK2018}}]\label{algorithm}
		Set $i=1$, $\mb{z}_{1}=\mb{0}$ and $g_{1}=g$.
		\begin{enumerate}
			\item\label{step1} If statement $1$ of Lemma~\ref{lemma:dichotomy} holds for $h=g_{i}$ and $c=c_{i}$ then stop. If not proceed to step 2.
			\item\label{step2} Choose $\mb{z}_{i+1}\in c_{i+1}\Z^{d}\cap[0,c_{i}-c_{i+1}]\times[0,\frac{c_{i}}{N}-c_{i+1}]^{d-1}$ such that
			\begin{linenomath}
				\begin{equation}\label{eq:2i}
				\frac{\left\|g_{i}(\mb{z}_{i+1}+c_{i+1}\mb{e}_{1})-g_{i}(\mb{z}_{i+1})\right\|_{2}}{c_{i+1}}>(1+\varphi)\frac{\left\|g_{i}(c_{i}\mb{e}_{1})-g_{i}(\mb{0})\right\|_{2}}{c_{i}}
				\end{equation}	 
			\end{linenomath}
			and define $g_{i+1}\colon [0,c_{i+1}]\times[0,c_{i+1}/N]^{d-1}\to\R^{kd}$ by
			\begin{linenomath}
				\begin{equation*}
				g_{i+1}(\mb{x}):=g_{i}(\mb{x}+\mb{z}_{i+1})=g\left(\mb{x}+\sum_{j=1}^{i+1}\mb{z}_{j}\right).
				\end{equation*}
			\end{linenomath}
			\item Set $i=i+1$ and return to step 1.
		\end{enumerate}
	\end{algorithm}
	At each potential iteration $i\geq 1$ of Algorithm~\ref{algorithm}, the conditions of Lemma~\ref{lemma:dichotomy} are satisfied for $d$, $\omega$, $\varepsilon$, $M$, $\varphi$, $N_{0}$, $c=c_{i}$, $n$, $N$ and $h=g_{i}\colon [0,c_{i}]\times [0,c_{i}/N]^{d-1}\to \R^{n}$. Therefore, whenever the algorithm does not terminate in step~\ref{step1}, we have that such a point $\mb{z}_{i+1}$ required by step~\ref{step2} exists by Lemma~\ref{lemma:dichotomy}. 
	
	To complete the proof, it suffices to verify that Algorithm~\ref{algorithm} terminates after at most $r$ iterations. This is clear, after rewriting \eqref{eq:2i} in the form
	\begin{linenomath}
		\begin{equation*}
		\frac{\lnorm{2}{g_{i+1}(c_{i+1}\mb{e}_{1})-g_{i+1}(\mb{0})}}{c_{i+1}}>(1+\varphi)\frac{\left\|g_{i}(c_{i}\mb{e}_{1})-g_{i}(\mb{0})\right\|_{2}}{c_{i}}>\frac{(1+\varphi)^{i}\omega^{-1}(c)}{c},
		\end{equation*}
	\end{linenomath}
	where the latter inequality follows by induction and the $\omega$-continuity of $g^{-1}$. If Algorithm~\ref{algorithm} completed $r+1$ iterations then, the inequality above for $i=r$ provides, in light of \eqref{eq:num_iter}, a contradiction to the $\omega$-continuity of $g_{r+1}$. 
\end{proof}\fi\normalcolor

\subsection{Largeness of the subfamily \texorpdfstring{$\M_{0}$}{\unichar{"02133}\textzeroinferior}.}\label{subsec:largeness}
The objective of this subsection is to prove that any $\omega\in\M$ satisfying
\begin{linenomath}
\begin{equation*}
\omega(t)\leq Lt\left(\log\frac{1}{t}\right)^{\alpha},\qquad \text{for all $t\in(0,a_{\omega})$,}
\end{equation*}
\end{linenomath}
for some $\alpha>0$ and $L\geq 1$ belongs to the family $\M_{0}$ of Definition~\ref{def_par_subfam}. The proof relies on establishing sufficiently good bounds on the parameters of Lemma~\ref{lemma:dichotomy} and Definition~\ref{def_par_subfam}.

Throughout the work, the parameter $c$ is usually treated as a constant. In this subsection, however, we are making the dependence on $c$ explicit. The first reason for that is that we are going to apply the bounds derived here to sequences of parameters generated in Definition~\ref{def_par_subfam}, that is, with $c_i$ in place of $c$. Another reason is that we want to make sure that the value of $c$ does not influence the powers of $\varepsilon$ in various bounds of the form $\poly{\varepsilon}$ below. This is to ensure that the value of $\alpha_0(d)$ from Lemma~\ref{lemma:M1} is independent of $c$.

For technical reasons, we need to make sure that the bounds on various parameters established for the modulus $Lt(\log(1/t))^\gamma$ are also valid bounds for the values of the same parameters with respect to all moduli $\omega(t)\leq Lt(\log(1/t))^\gamma$.
\begin{lemma}\label{lemma:parameters}
	Let $\gamma>0$, $L\geq 1$ and $\omega\in \mc{M}$ satisfy
	\begin{linenomath}
	\begin{equation}\label{eq:omega_upper_bound}
	\omega(t)\leq  Lt\left(\log\frac{1}{t}\right)^\gamma,\qquad \text{for all $t\in(0,a_{\omega})$}.
	\end{equation}
	\end{linenomath}
 \hl{Let $d\in\N$ and $\varepsilon,c\in(0,a_{\omega})$.} Then in addition to the conclusion of Lemma~\ref{lemma:dichotomy} the parameters 
	\begin{linenomath}
	\begin{multline*}
		\varphi=\varphi(d,\omega,\varepsilon),\qquad N_{0}=N_{0}(d,\omega,\varepsilon,c),\\
		 M=M(N,d,\omega,\varepsilon,c),\qquad M_{0}=M_{0}(d,\omega,\varepsilon,c):=M(N_{0},d,\omega,\varepsilon,c),
	\end{multline*}
	\end{linenomath}
	may be taken of the form
	\hl{\begin{linenomath}
	\begin{multline}\label{eq:parameters}
	\varphi=\pl{d,\gamma}{L}{\varepsilon},\qquad N_{0}=\frac{\pl{d,\gamma}{L}{\log \frac{1}{c}}}{\pl{d,\gamma}{L}{\varepsilon}},\\
	M=\frac{\pl{d,\gamma}{L}{N}}{\pl{d,\gamma}{L}{\varepsilon}},\qquad M_{0}=\frac{\pl{d,\gamma}{L}{\log\frac{1}{c}}}{\pl{d,\gamma}{L}{\varepsilon}}.
	\end{multline}
	\end{linenomath}}
\end{lemma}

\begin{proof}
	The proof relies on estimating $\omega^{-1}$ from below. We observe the bound
	\begin{equation}\label{eq:strong_lb_omeg_inv}
	\omega^{-1}(s)\geq \Lambda(\gamma,L)\cdot \frac{s}{\left(\log\frac{1}{s}\right)^{\gamma}}, \qquad s\in \left(0,\min\set{\frac{1}{2},\omega(a_{\omega})}\right),
	\end{equation}
	which may be derived as follows: \jl{f}irstly note that
	\begin{equation}\label{eq:weak_lb_omeg_inv}
	t\left(\log\frac{1}{t}\right)^{\gamma}\leq \sup_{r\in (0,1/2)}r^{1/2}\left(\log\frac{1}{r}\right)^{\gamma}\cdot t^{1/2}=\Lambda(\gamma)t^{1/2}, \qquad t\in (0,a_{\omega}).
	\end{equation}
	Here we used that $a_{\omega}\leq 1/2$; see Definition~\ref{def:M}. Let $s\in \left(0,\min\set{\frac{1}{2},\omega(a_{\omega})}\right)$ and $t\in (0,a_{\omega})$ be such that $s=\omega(t)$. Then we may combine \eqref{eq:omega_upper_bound} and \eqref{eq:weak_lb_omeg_inv} to get
	\[
	\log\frac{1}{s}\geq\log\frac{1}{L\Lambda(\gamma)t^{1/2}}=\log\frac{1}{L\Lambda(\gamma)}+\frac{1}{2}\log		\frac{1}{t}\geq\frac{1}{4}\log\frac{1}{t},
	\]
	which is valid for all $t\in\br*{0, (L\Lambda(\gamma))^{-4}}$. Thus, we can write that
	\[
	\log\frac{1}{s}\geq\Lambda(\gamma, L)\log\frac{1}{t}, \qquad t\in(0,a_\omega)
	\]
	for an appropriate choice of the constant on the right-hand side. This inequality and \eqref{eq:omega_upper_bound} imply that
	\begin{linenomath}	
	\begin{equation*}
	\omega^{-1}(s)=t\geq \frac{s}{L\left(\log\frac{1}{t}\right)^{\gamma}}\geq \Lambda(\gamma,L)\frac{s}{\left(\log\frac{1}{s}\right)^{\gamma}}.
	\end{equation*}	
	\end{linenomath}
From \eqref{eq:strong_lb_omeg_inv} and \eqref{eq:weak_lb_omeg_inv} it follows that
	\begin{equation}\label{eq:omeg_inv_poly}
	\hl{
	\omega^{-1}(s)\geq \ply{\gamma,L}{s},\qquad s\in(0,\omega(a_{\omega})).
	}
	\end{equation}
From this point on, all expressions of the form $\poly{\cdot}$ should be read as $\pl{d,\gamma}{L}{\cdot}$.

We prove the lemma by induction on the dimension $d$.
 The case $d=1$ comes immediately from Lemma~\ref{lemma:1dimdich} and \eqref{eq:omeg_inv_poly}\jl{; we just note that although $N_0$ does not appear explicitly in the lemma, it may be taken equal to $2$ there, since the lemma applies to any $N\geq 2$}.
 Assume now that $d\geq 2$ and that the statement of the lemma is valid for all smaller dimensions. For a parameter $\theta=\theta(d,\omega,\varepsilon)$, which in view of \eqref{eq:theta_bound} \jl{and \eqref{eq:omeg_inv_poly}} may be taken of the form $\theta=\poly{\varepsilon}$, the proof of Lemma~\ref{lemma:dichotomy} establishes the following sufficient conditions on the parameters $\varphi$, $N_{0}$ and $M$:
	\begin{linenomath}
	\begin{align}
	&0<\varphi<\frac{1}{2}\varphi(d-1,\omega,\theta)\label{eq:varphi}\\
	&N_{0}\geq N_{0}(d-1,\omega,\theta,c),\quad N_{0}\geq \frac{1}{\omega^{-1}\left(\frac{\varphi\omega^{-1}(c)}{8\omega(c)}\right)}, \quad \hl{N_{0}\geq \frac{6}{\varepsilon},}\label{eq:N}\\
	&M\in M_{d-1}\Z\qquad M\geq\frac{1}{\omega^{-1}\left(\frac{1}{NM_{d-1}}\right)},\quad\text{with $M_{d-1}:=M(N,d-1,\omega,\theta,c)$.}\label{eq:M}
	\end{align}
	\end{linenomath}
	We argue that these conditions are satisfied for a choice of $\varphi$, $N_{0}$, $M$ and $M_{0}$ of the form~\eqref{eq:parameters}. From the induction hypothesis and $\theta=\poly{\varepsilon}$, it is clear that \eqref{eq:varphi} is satisfied for an appropriate choice of $\varphi\jl{=\varphi(d,\omega,\theta)}=\poly{\varepsilon}$.	
	We fix $\varphi$ accordingly. Similarly, the induction hypothesis and $\theta=\poly\varepsilon$ ensure that the first inequality of \eqref{eq:N} may be satisfied by a choice of $N_{0}$ of the form of \eqref{eq:parameters}. We verify that such a choice may additionally satisfy the second \hl{and the third} inequality of \eqref{eq:N}.
To this end, \hl{we use \eqref{eq:omega_upper_bound} and \eqref{eq:strong_lb_omeg_inv} to derive}
\begin{linenomath}
	\begin{equation}\label{eq:w_inv_over_w}
	\frac{\omega^{-1}(c)}{\omega(c)}\geq \frac{1}{\poly{\log\frac{1}{c}}}.
	\end{equation}
\end{linenomath}
	We apply this bound, \hl{$\varphi=\poly{\varepsilon}$ and \eqref{eq:omeg_inv_poly}} to derive
	\begin{linenomath}
	\begin{equation*}
	\frac{1}{\omega^{-1}\left(\frac{\varphi\omega^{-1}(c)}{8\omega(c)}\right)}\leq \frac{\poly{\log\frac{1}{c}}}{\poly{\varepsilon}}.
	\end{equation*}
	\end{linenomath}
	Hence, an appropriate choice of $N_{0}$ of the form~\eqref{eq:parameters} satisfies \hl{all three} inequalities of \eqref{eq:N}. We fix such an $N_{0}$ and show now that the choice of $M_{0}$ is possible. It is first necessary to consider the parameter $M$. By the induction hypothesis, the first condition of \eqref{eq:M} may clearly be satisfied by a choice of $M$ of the form \eqref{eq:parameters}. To justify that the second part of \eqref{eq:M} may also be satisfied by such a choice, we note the bound	
	\begin{linenomath}
	\begin{equation*}
	\frac{1}{\omega^{-1}\left(\frac{1}{NM_{d-1}}\right)}\leq \frac{1}{\omega^{-1}\left(\frac{\poly{\varepsilon}}{\poly{N}}\right)}\leq \frac{\poly{N}}{\poly{\varepsilon}}.
	\end{equation*}
	\end{linenomath}
Finally, given that $M$ may be chosen of the form $M=\frac{\poly{N}}{\poly{\varepsilon}}$, it follows that a sufficient condition on $M_{0}$ is given by $M_{0}\geq \frac{\poly{N_{0}}}{\poly{\varepsilon}}$. The choice of $N_{0}$ then allows us to take $M_{0}=\frac{\poly{\log\frac{1}{c}}}{\poly{\varepsilon}}$. 
\end{proof}

\begin{lemma}\label{lemma:c_i_bound}
	Let $\gamma>0$, $L\geq 1$ and $\omega\in \mc{M}$ satisfy
	\begin{linenomath}
	\begin{equation*}
	\omega(t)\leq  Lt\left(\log\frac{1}{t}\right)^\gamma,\qquad \text{for all $t\in(0,a_{\omega})$}.
	\end{equation*}
	\end{linenomath}
	\hl{Let $d\in\N$ and $c,\varepsilon\in(0,a_{\omega})$.}
	Then the following inequality holds for all $i\geq 1$:
	\begin{linenomath}
	\begin{equation}\label{eq:strong_ci_bound}
	c_{i}\geq \br*{\hl{\pl{d,\gamma}{L}{\varepsilon}}c}^{i^{2}},
	\end{equation}
	\end{linenomath}
\hl{where $c_{i}$ is defined according to Definition~\ref{def_par_subfam}}. In particular, the function \hl{$\pl{d,\gamma}{L}{\varepsilon}$} is independent of $i$. 
\end{lemma}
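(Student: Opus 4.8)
The plan is to prove \eqref{eq:strong_ci_bound} by induction on $i$ after passing to the logarithmic scale. Put $L_i:=\log(1/c_i)$, so that $L_1=\log(1/c)$ and, by the recursion $c_i=c_{i-1}/(N_{i-1}M_{i-1})$ of Definition~\ref{def_par_subfam}, $L_i=L_{i-1}+\log(N_{i-1}M_{i-1})$ for $i\geq 2$; since $c<a_\omega\leq 1$ and each $N_j,M_j$ is a positive integer, the sequence $(c_i)_i$ is non-increasing and $L_i\geq L_1>0$. Since $\omega$ satisfies the hypothesis of Lemma~\ref{lemma:parameters}, applying that lemma with $c$ replaced by $c_{i-1}$ shows that $N_{i-1}=N_0(d,\omega,\varepsilon,c_{i-1})$ and $M_{i-1}=M_0(d,\omega,\varepsilon,c_{i-1})$ are both of the form $\ply{d,\gamma,L}{\log(1/c_{i-1})}/\ply{d,\gamma,L}{\varepsilon}$, with powers and implicit constants depending only on $d,\gamma,L$ — in particular, not on $i$ nor on the value $c_{i-1}$ substituted. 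Hence there are a power $\beta=\beta(d,\gamma,L)$ and a $\ply{d,\gamma,L}{\varepsilon}$-function $P$, which we may take $\leq 1$, with
\begin{linenomath}
\begin{equation*}
N_{i-1}M_{i-1}\leq\frac{(1+L_{i-1})^{\beta}}{P(\varepsilon)};
\end{equation*}
\end{linenomath}
writing $\log_+ t:=\max\{\log t,0\}$ and using $\log(1+t)\leq 1+\log_+ t$ for $t>0$ gives, with $D:=\beta+\log(1/P(\varepsilon))\geq 0$,
\begin{linenomath}
\begin{equation}\label{eq:L_recursion}
L_i\leq L_{i-1}+\beta\log_+(L_{i-1})+D\qquad\text{for }i\geq 2.
\end{equation}
\end{linenomath}
The feature to keep in sight is that $P(\varepsilon)=\Theta(\varepsilon^{\alpha})$ for a fixed $\alpha=\alpha(d,\gamma,L)$, so $D\leq\pi(d,\gamma,L)(1+\log(1/\varepsilon))$; equivalently $e^{-D}$ is again a function of the form $\ply{d,\gamma,L}{\varepsilon}$.

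Next I would fix $K:=\max\{\log(1/c),K^{*}\}$, where $K^{*}=K^{*}(d,\gamma,L,\varepsilon)$ is a large enough quantity of the form $\pi(d,\gamma,L)(1+\log(1/\varepsilon))$, chosen so that $K\geq 1$ and
\begin{linenomath}
\begin{equation}\label{eq:K_condition}
\beta\log_+\bigl((i-1)^2 K\bigr)+D\leq(2i-1)K\qquad\text{for all }i\geq 2.
\end{equation}
\end{linenomath}
That a suitable $K^{*}$ exists is the only computation in the proof, and it is elementary: for $i\geq 2$ the left side of \eqref{eq:K_condition} equals $2\beta\log(i-1)+\beta\log K+D$ while the right side is at least $(i-1)K$, so the inequality follows once $K$ exceeds a fixed multiple of $\max\{\beta,D\}$ plus an absolute constant — a threshold of the form $\pi(d,\gamma,L)(1+\log(1/\varepsilon))$, using the bound on $D$ above; the extra term $\log(1/c)$ in $K$ only helps. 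With such $K$ fixed I claim $L_i\leq i^2 K$ for all $i\geq 1$. The base case $i=1$ is $L_1=\log(1/c)\leq K$. For the step, assuming $L_{i-1}\leq(i-1)^2K$, monotonicity of $\log_+$ together with \eqref{eq:L_recursion} and \eqref{eq:K_condition} give
\begin{linenomath}
\begin{equation*}
L_i\leq(i-1)^2K+\beta\log_+\bigl((i-1)^2K\bigr)+D\leq(i-1)^2K+(2i-1)K=i^2K.
\end{equation*}
\end{linenomath}

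Finally, exponentiating $L_i\leq i^2K$ yields $c_i=e^{-L_i}\geq e^{-i^2K}=(e^{-K})^{i^2}$, and since $e^{-K}=\min\{c,e^{-K^{*}}\}$ with $c<1$ and $e^{-K^{*}}$ a $\ply{d,\gamma,L}{\varepsilon}$-function that is $\leq 1$, we get $e^{-K}\geq c\,e^{-K^{*}}=\ply{d,\gamma,L}{\varepsilon}\cdot c$. Hence $c_i\geq(\ply{d,\gamma,L}{\varepsilon}\,c)^{i^2}$, and the $\ply{d,\gamma,L}{\varepsilon}$-function produced depends only on $d,\gamma,L$, so in particular on neither $i$ nor $c$; this is exactly \eqref{eq:strong_ci_bound}. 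The one genuine obstacle is bookkeeping rather than any deep point: one must make sure the base of the exponential factors honestly as $\ply{d,\gamma,L}{\varepsilon}\cdot c$ and does not degrade into something like $\varepsilon^{\log(1/\varepsilon)}$ or $c^{\operatorname{poly}(\log(1/c))}$. This is precisely where Lemma~\ref{lemma:parameters} is used in full strength: because $N_0$ and $M_0$ are bounded by genuine powers — not merely sub-exponential functions — of $\log(1/c)$ and $1/\varepsilon$, the additive error $D$ in \eqref{eq:L_recursion} stays linear in $\log(1/\varepsilon)$, and a linear error, together with the slowly growing term $\beta\log_+((i-1)^2K)$, can be absorbed into the $i^2$ exponent with a base of the required shape.
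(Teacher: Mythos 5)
Your proof is correct, and it reaches \eqref{eq:strong_ci_bound} by a genuinely different route than the paper, although both arguments consume the same input, namely the bound $N_{i-1}M_{i-1}\leq \ply{d,\gamma,L}{\log(1/c_{i-1})}/\ply{d,\gamma,L}{\varepsilon}$ from Lemma~\ref{lemma:parameters} applied with $c_{i-1}$ in place of $c$ (uniformly in the substituted value, which you correctly flag as the crucial point). The paper proceeds in two stages: it first weakens the recursion to $c_{i+1}\geq\beta(\varepsilon)c_i^2$ to get the crude doubly-exponential bound $c_i\geq(\beta(\varepsilon)c)^{2^{i-1}}$, then unrolls the exact recursion into $c_i\geq\overline{\beta}(\varepsilon)^{i-1}c/\prod_{j<i}\log(1/c_j)^q$ and feeds the crude bound back into the logarithms, so that the denominator becomes $\bigl(2^i\log(1/(\beta(\varepsilon)c))\bigr)^{qi}$ and the exponent improves from $2^i$ to $i^2$. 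You avoid this bootstrap entirely by passing to $L_i=\log(1/c_i)$, where the recursion becomes the additive inequality $L_i\leq L_{i-1}+\beta\log_+(L_{i-1})+D$, and running a single induction $L_i\leq i^2K$ with $K=\max\{\log(1/c),K^*\}$ chosen so that the increment $\beta\log_+((i-1)^2K)+D$ is absorbed by the gap $(2i-1)K=i^2K-(i-1)^2K$ between consecutive squares; your verification of \eqref{eq:K_condition} and of the final factorisation $e^{-K}\geq c\,e^{-K^*}$ with $e^{-K^*}=\ply{d,\gamma,L}{\varepsilon}$ is sound, and it delivers a base of exactly the required form, independent of $i$ and $c$. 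What your approach buys is transparency about \emph{why} $i^2$ suffices: the per-step multiplicative loss is polylogarithmic in $1/c_{i-1}$, hence logarithmic-additive in $L_{i-1}$, and such increments are dominated by the linear-in-$i$ growth of $i^2-(i-1)^2$; what the paper's version buys is that it produces the intermediate estimate $c_i\geq(\beta(\varepsilon)c)^{2^{i-1}}$ explicitly, but that estimate is not used elsewhere, so nothing is lost by your shortcut.
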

\begin{proof}
	 By Lemma~\ref{lemma:parameters}, there are functions $\beta(\varepsilon),\overline{\beta}(\varepsilon)$, \hl{ both having the form $\pl{d,\gamma}{L}{\varepsilon}$,}
	 and a number $q>0$, depending only on $d, L$ and $\gamma$ such that
\begin{linenomath}
\begin{equation}\label{eq:ci_recursion}
c_{i+1}=\frac{c_{i}}{N_{i}M_{i}}\geq\frac{\overline{\beta}(\varepsilon)c_{i}}{\left(\log\frac{1}{c_{i}}\right)^{q}}\geq \beta(\varepsilon)c_{i}^{2},\qquad i\geq 0,
\end{equation}
\end{linenomath}
    where the last inequality is achieved by choosing the polynomial $\beta$ carefully enough.
    A sufficient choice is to set $\beta(\varepsilon):=\min\set{1,\br*{\frac{e}{q}}^q}\cl{\beta}(\varepsilon)$. Additionally, $\cl{\beta}$ is chosen so that $\cl{\beta}(\varepsilon)\in(0,1)$ for every $\varepsilon\in(0,a_\omega)\subseteq(0,\frac{1}{2})$. Applying \eqref{eq:ci_recursion} inductively
    yields the bound 
	\begin{linenomath}
	\begin{equation}\label{eq:weak_ci_bound}
	c_{i}\geq \beta(\varepsilon)^{2^{i-1}-1}c^{2^{i-1}}, \qquad\hl{i\geq 1.}
	\end{equation}
	\end{linenomath}    
    We use the recursion \eqref{eq:ci_recursion} to derive
	\begin{linenomath}
	\begin{equation*}
	c_{i}\geq\frac{\overline{\beta}(\varepsilon)^{i-1}c}{\prod_{j=1}^{i-1}\left(\log\frac{1}{c_{j}}\right)^{q}},\qquad i\geq 1.
	\end{equation*}
	\end{linenomath}
	Bounding each $c_{j}$ term in the denominator below by $c_{i}$, and applying a weaker form of the inequality of \eqref{eq:weak_ci_bound}, namely
	\begin{linenomath}
	\begin{equation*}
	c_{i}\geq (\beta(\varepsilon)c)^{2^{i}},\qquad i\geq 1,
	\end{equation*}
	\end{linenomath}
	we obtain
	\begin{linenomath}
	\begin{equation*}
	c_{i}\geq\frac{\overline{\beta}(\varepsilon)^{i-1}\jl{c}}{\left(\log\frac{1}{c_{i}}\right)^{qi}}\geq  \frac{\overline{\beta}(\varepsilon)^{i-1}c}{\left(2^{i}\log\left(\frac{1}{\beta(\varepsilon)c}\right)\right)^{qi}},\qquad i\geq 1.
	\end{equation*}
	\end{linenomath}
	\hl{Observe that there is $\Lambda(q)>0$ such that $\left(\log\frac{1}{t}\right)\leq \Lambda(q) t^{-\frac{1}{q}}$ for every $t\in (0,\infty)$. Together with the inequality above, this implies \eqref{eq:strong_ci_bound}.}
\end{proof}
For $i\in[r]$ the tiled family of cubes $\Sq_{i}$ fulfilling the assertions of Lemma~\ref{lemma:geometric} will be defined as a subfamily of $\mc{Q}_{c_{i}/N_{i}}$. From the previous lemma we immediately obtain a lower bound on the sidelength $\frac{c_{i}}{N_{i}}$ of cubes in $\mc{Q}_{c_i/N_i}$.
\begin{cor}\label{cor:sidelength}
Let $\gamma>0$, $L\geq 1$ and $\omega\in \mc{M}$ satisfy
\begin{linenomath}
\begin{equation*}
\omega(t)\leq  Lt\left(\log\frac{1}{t}\right)^\gamma,\qquad \text{for all $t\in(0,a_{\omega})$}.
\end{equation*}
\end{linenomath}
\hl{Let $d\in\N$, $c,\varepsilon\in(0,a_{\omega})$ and the parameters $(c_{i})_{i=1}^{\infty}$, $(N_{i})_{i=1}^{\infty}$ be given by Definition~\ref{def_par_subfam}.} Then 
\begin{linenomath}
$$
\operatorname{sidelength}(S)=\frac{c_i}{N_i}\geq c_{i+1}\geq \left(\hl{\pl{d,\gamma}{L}{\varepsilon}}\cdot c\right)^{(i+1)^2}
$$
\end{linenomath}
for all cubes $S\in\Sq_{i}\subseteq \mc{Q}_{c_{i}/N_{i}}$.
\end{cor}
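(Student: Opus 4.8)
The plan is to read off this statement directly from the construction and from Lemma~\ref{lemma:c_i_bound}, with essentially no computation. First recall that, as announced in the text immediately preceding the corollary, for each $i\in[r]$ the tiled family $\Sq_{i}$ is by construction a subfamily of $\mc{Q}_{c_{i}/N_{i}}$; hence every cube $S\in\Sq_{i}$ has sidelength exactly $c_{i}/N_{i}$, which is the first equality. For the inequality $c_{i}/N_{i}\geq c_{i+1}$, I would invoke the defining recursion \eqref{eq:def_parameters}, namely $c_{i+1}=c_{i}/(N_{i}M_{i})$, together with the fact that $M_{i}\in\N$ and therefore $M_{i}\geq 1$; dividing by the larger quantity $N_{i}M_{i}$ instead of $N_{i}$ only decreases the value, so $c_{i}/N_{i}\geq c_{i}/(N_{i}M_{i})=c_{i+1}$. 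Finally, the last inequality is precisely the bound of Lemma~\ref{lemma:c_i_bound} applied with $i+1$ in place of $i$: $c_{i+1}\geq(\ply{d,\gamma,L}{\varepsilon}c)^{(i+1)^{2}}$.

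There is no genuine obstacle here; the corollary is a bookkeeping consequence of the preceding lemma. The only subtlety worth a sentence is uniformity of the implicit exponent: the function $\ply{d,\gamma,L}{\varepsilon}$ supplied by Lemma~\ref{lemma:c_i_bound} is independent of the index $i$ (as that lemma explicitly states), so a single such $\ply{d,\gamma,L}{\varepsilon}$ may be used simultaneously for all cubes occurring in any of the families $\Sq_{1},\dots,\Sq_{r}$, which is exactly the form in which the bound will later be fed into the estimate for $\kappa(\varepsilon)$ in Lemma~\ref{lemma:M1}.
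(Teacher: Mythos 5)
Your proposal is correct and matches the paper, which states this corollary without a separate proof precisely because it is the immediate consequence you describe: sidelength $c_i/N_i$ from the construction of $\Sq_i\subseteq\mc{Q}_{c_i/N_i}$, the inequality $c_i/N_i\geq c_i/(N_iM_i)=c_{i+1}$ from the recursion \eqref{eq:def_parameters} with $M_i\in\N$, and the final bound from Lemma~\ref{lemma:c_i_bound} applied at index $i+1$. Your remark on the $i$-independence of $\ply{d,\gamma,L}{\varepsilon}$ is also exactly the point the paper emphasises for the later use in Lemma~\ref{lemma:M1}.
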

\begin{lemma}\label{lemma:rbound}
	Let $\gamma>0$, $L\geq 1$ and $\omega\in \mc{M}$ satisfy
	\begin{linenomath}
	\begin{equation*}
	\omega(t)\leq  Lt\left(\log\frac{1}{t}\right)^\gamma,\qquad \text{for all $t\in(0,a_{\omega})$}.
	\end{equation*}
	\end{linenomath}
	\hl{Let $d\in\N$ and $c,\varepsilon\in(0,a_{\omega})$.}
	Then $\omega\in\M_{0}$, and moreover, the parameter $r(d,\omega,\varepsilon,c)$ of Definition~\ref{def_par_subfam} witnessing this may be taken of the form
	\begin{linenomath}
	\begin{equation*}
	r(d,\omega,\varepsilon,c)=\frac{1}{c\cdot \hl{\pl{d,\gamma}{L}{\varepsilon}}}.
	\end{equation*}
	\end{linenomath}
\end{lemma}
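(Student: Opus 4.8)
The plan is, given $d$, $\varepsilon\in(0,1)$ and $c\in(0,a_\omega)$, to exhibit a natural number $r$ of the advertised form (more precisely, with $r\leq\frac1{c\cdot\ply{d,\gamma,L}{\varepsilon}}$) for which inequality \eqref{eq:num_iter} of Definition~\ref{def_par_subfam} holds; this establishes both $\omega\in\M_0$ and the quantitative bound on $r$. I will use three facts proved earlier: by Lemma~\ref{lemma:parameters} the parameter $\varphi=\varphi(d,\omega,\varepsilon)$ may be taken of the form $\ply{d,\gamma,L}{\varepsilon}$ and lies in $(0,1)$; by Lemma~\ref{lemma:c_i_bound} the sequence from Definition~\ref{def_par_subfam} satisfies $c_{i}\geq\br*{\ply{d,\gamma,L}{\varepsilon}c}^{i^2}$ with a polynomial independent of $i$; and $\omega^{-1}(c)\geq\pi(\gamma,L)c^2$, as used in the proof of Lemma~\ref{lemma:parameters}. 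Finally, the hypothesis $\omega(t)\leq Lt\log(1/t)^\gamma$ immediately gives $\frac{\omega(c_{r+1})}{c_{r+1}}\leq L\log\br*{\frac{1}{c_{r+1}}}^{\gamma}$.

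First I would bound the two sides of \eqref{eq:num_iter}. Combining the last display with Lemma~\ref{lemma:c_i_bound},
\begin{linenomath}
$$
\frac{\omega(c_{r+1})}{c_{r+1}}\leq L\br*{(r+1)^{2}\log\frac{1}{\ply{d,\gamma,L}{\varepsilon}\,c}}^{\gamma},
$$
\end{linenomath}
which is polynomial in $r$ of degree $2\gamma$, and otherwise only polylogarithmic in $\frac1c$ and $\frac1\varepsilon$. For the left-hand side, $\log(1+\varphi)\geq\varphi/2$ (valid for $\varphi\in(0,1]$) together with $\omega^{-1}(c)\geq\pi(\gamma,L)c^2$ gives
\begin{linenomath}
$$
\frac{(1+\varphi)^{r}\omega^{-1}(c)}{c}\geq\pi(\gamma,L)\,c\,e^{r\varphi/2},
$$
\end{linenomath}
an expression exponential in $r\varphi$.

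Next I would choose the parameters. Write $\varphi\geq\pi_1\varepsilon^{a}$ with $\pi_1=\pi_1(d,\gamma,L)>0$, put $b:=a+1$, and set $r:=\ceil*{\tfrac{K}{c\,\varepsilon^{b}}}$ for a constant $K=K(d,\gamma,L)\geq1$ to be fixed last. Since $c,\varepsilon<1\leq K$ we have $\tfrac{K}{c\varepsilon^{b}}\geq1$, hence $r+1\leq\tfrac{3K}{c\varepsilon^{b}}$ and $r\leq\tfrac{2K}{c\varepsilon^{b}}$ (so $r$ has the claimed form), while $b-a=1$ gives $r\varphi\geq\tfrac{K\pi_1}{c\varepsilon}$. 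Plugging these into the two displayed bounds, the desired inequality $\frac{(1+\varphi)^{r}\omega^{-1}(c)}{c}\geq\frac{\omega(c_{r+1})}{c_{r+1}}$ reduces, after taking logarithms and using elementary estimates of the type $\log t\leq t$ (so that every term on the right — including those coming from $(r+1)^{2\gamma}$ and from the logarithmic factor in the bound for $\omega(c_{r+1})/c_{r+1}$ — is absorbed into a multiple of $\tfrac1{c\varepsilon}\geq1$), to a single scalar inequality of the shape
\begin{linenomath}
$$
\frac{K\pi_1}{2}\geq 2\gamma\log K+\pi(d,\gamma,L).
$$
\end{linenomath}
The left side is linear in $K$ and the right side logarithmic, so this holds for all $K\geq K_0(d,\gamma,L)$; fixing $K:=K_0(d,\gamma,L)$ finishes the proof.

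The main obstacle is uniformity: the estimate must hold for every $c\in(0,a_\omega)$ and every $\varepsilon\in(0,1)$, not only for small values. The exponential gain on the left is controlled by $r\varphi$, which must blow up as $\varepsilon\to0$ as well as $c\to0$; this is why the exponent $b$ of $\varepsilon$ in $r$ has to be chosen strictly larger than the exponent $a$ of $\varepsilon$ in $\varphi=\ply{d,\gamma,L}{\varepsilon}$, so that $r\varphi$ picks up a genuine $\varepsilon^{-1}$ factor rather than cancelling. The second point making the uniform bound work is that the implicit constant $K$ in $r$ is free to depend on $d,\gamma,L$: all the polynomial-and-logarithmic overhead (from $(r+1)^{2\gamma}$, from $\log(1/c_{r+1})$, and from $\omega^{-1}(c)\geq\pi(\gamma,L)c^2$) contributes on the right only an additive term of order $(\log K)\cdot\tfrac1{c\varepsilon}$, which a linear-in-$K$ left-hand side eventually dominates.
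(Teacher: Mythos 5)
Your proof is correct and follows essentially the same route as the paper: both bound $\omega(c_{r+1})/c_{r+1}$ via Lemma~\ref{lemma:c_i_bound} and the hypothesis on $\omega$, bound the left side of \eqref{eq:num_iter} below using $\varphi=\ply{d,\gamma,L}{\varepsilon}$ and a crude lower bound on $\omega^{-1}(c)/c$, take logarithms, and choose $r$ of order $\frac{1}{c\cdot\ply{d,\gamma,L}{\varepsilon}}$. The only (immaterial) differences are that the paper absorbs the $2\gamma\log i$ term via $\log i\leq\pi\sqrt i$ and takes $r\sim\frac{1}{c(\log(1+\varphi))^2}$, whereas you absorb it linearly by giving $r$ one extra power of $\varepsilon^{-1}$ beyond $\varphi^{-1}$.
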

\begin{proof}
We consider $d, L$ and $\gamma$ fixed. All terms in the subsequent calculation will depend implicitly on them and this dependence will no longer be mentioned explicitly. \hl{Moreover, all occurences of $\poly{\cdot}$ in the present proof stand for $\pl{d,\gamma}{L}{\cdot}$.}
Thus, as shown in Lemma~\ref{lemma:parameters}, we may write $\varphi=\poly{\varepsilon}$. 

We wish to find minimal $r:=r(d,\omega,\varepsilon,c)$ such that for every $i\geq r$ the following holds:
\begin{linenomath}
\begin{align}\label{eq:rreq}
\begin{split}
\frac{(1+\varphi)^{i}\omega^{-1}(c)}{c}&=\frac{\omega^{-1}(c)}{c}\br{1+\poly{\varepsilon}}^i\geq \frac{\omega(c_{i+1})}{c_{i+1}}
\end{split}
\end{align}
\end{linenomath}
By Lemma~\ref{lemma:c_i_bound}, we can bound $c_{i+1}\geq \br*{\poly{\varepsilon}c}^{\hl{(i+1)}^2}$. 
We emphasise particularly that the expression $\poly{\varepsilon}$ is independent of $i$. Using this bound together with the bound $\frac{\omega^{-1}(c)}{c}\geq\frac{1}{\poly{\log(1/c)}}$ of \eqref{eq:w_inv_over_w} and the hypothesis $\frac{\omega(t)}{t}\leq L\left(\log\frac{1}{t}\right)^{\gamma}$ we see that \eqref{eq:rreq} is implied by the inequality
\begin{linenomath}
\begin{equation*}
\frac{1}{\poly{\log\frac{1}{c}}}\br{1+\poly{\varepsilon}}^i\geq \br*{\log\br*{(c\poly{\varepsilon})^{-\hl{(i+1)}^2}}}^\gamma.
\end{equation*}
\end{linenomath}
This inequality can be rewritten as
\begin{linenomath}
\begin{equation*}
i\log(1+\poly{\varepsilon})\geq 2\gamma\log \hl{(i+1)}+\gamma\log\log\frac{1}{c\poly{\varepsilon}}+\Lambda{\log\log\frac{1}{c}}
\end{equation*}
\end{linenomath}
and using the bound $2\gamma\log \hl{(i+1)}\leq \Lambda\sqrt i$ one can easily see that $i$ of size at least $\frac{\Lambda}{c\cdot \br*{\log\br*{1+\poly{\varepsilon}}}^2}$  satisfies the inequality.

We note that $\log\br{1+\poly{\varepsilon}}$ behaves as $\poly{\varepsilon}$ as $\varepsilon$ goes to zero. This yields the desired upper bound on $r$ of the form
\begin{linenomath}
$$
r\leq \frac{1}{c\cdot \poly{\varepsilon}}.\eqno\qedhere
$$
\end{linenomath}
\end{proof}

\subsection{A volume bound.}
\short In the present subsection we state a volume bound on the difference of images of two bi-$\omega$-mappings which are close with respect to the $\lnorm{\infty}{-}$ distance.
\fi\normalcolor
\arxiv The present subsection is devoted to establishing a volume bound on the difference of images of two bi-$\omega$-mappings which are close with respect to the $\lnorm{\infty}{-}$ distance. 
\fi\normalcolor
This will allow us to derive statement~\ref{lemma:geometric2} of Lemma~\ref{lemma:geometric} from the first conclusion~\ref{dich1} of the dichotomy of Lemma~\ref{lemma:dichotomy}. 

\begin{lemma}\label{lemma:volume}
	Let $d\in \N$, $\omega\in \mc{M}$, $c,\varepsilon\in(0,a_{\omega})$, $N\in\N$, $i\in[N-1]$ and $h\colon [0,c]\times[0,c/N]^{d-1}\to\R^{d}$ be a bi-$\omega$-mapping with $\bilipomeg(h)\leq 1$. Suppose that $h$ satisfies inequality \eqref{eq:translation} on $S_{i}:=\left[\frac{(i-1)c}{N},\frac{ic}{N}\right]\times\left[0,\frac{c}{N}\right]^{d-1}$. Then
	\begin{linenomath}
		\begin{equation*}
			\left|\leb (h(S_{i}))-\leb (h(S_{i+1}))\right|\leq \Lambda(d)\frac{\omega(\omega(\varepsilon\omega(\ell(S_i))))}{\ell(S_i)}\left(\frac{\omega(\varepsilon)\omega(\ell(S_i))}{\varepsilon\ell(S_i)}\right)^{d-1}\leb(S_i),
		\end{equation*}
	\end{linenomath}
	where $\Lambda(d)>0$ is a constant depending on $d$ only.
\end{lemma}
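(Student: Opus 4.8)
The plan is to express the volume difference as (at most) the Lebesgue measure of a thin tubular neighbourhood of the boundary $\partial h(S_{i})$, and then to estimate that neighbourhood by covering $\partial S_{i}$ with a grid of cubes of a carefully chosen mesh. For the reduction I would set $\mb{v}:=\frac{1}{N}\br*{h(c\mb{e}_{1})-h(\mb{0})}$ and define $g\colon S_{i}\to\R^{d}$ by $g(\mb{x}):=h\br*{\mb{x}+\frac{c}{N}\mb{e}_{1}}-\mb{v}$. Since $S_{i+1}=S_{i}+\frac{c}{N}\mb{e}_{1}$, the set $g(S_{i})$ is a translate of $h(S_{i+1})$, so $\leb(g(S_{i}))=\leb(h(S_{i+1}))$; and the hypothesis \eqref{eq:translation} on $S_{i}$ says precisely that $\lnorm{\infty}{g-h}\leq\varepsilon\omega(\ell(S_{i}))=:\delta$ on $S_{i}$. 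The restriction $h|_{S_{i}}$ is a continuous injection of a compact cube, hence an embedding, so identifying $S_{i}$ affinely with $I^{d}$ to apply Lemma~\ref{lem:close_fun_close_img} (stated for $I^{d}$ but valid verbatim for any cube) gives
\[
h(S_{i})\,\Delta\,g(S_{i})\subseteq\cl{B}\br*{\partial h(S_{i}),\delta}.
\]
As $\abs{\leb(A)-\leb(B)}\leq\leb(A\,\Delta\,B)$ for compact $A,B$, it remains to bound $\leb\br*{\cl{B}\br*{\partial h(S_{i}),\delta}}$.

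For the covering, note $\partial h(S_{i})=h(\partial S_{i})$ and that $\partial S_{i}$ is the union of the $2d$ faces of the cube $S_{i}$, each a $(d-1)$-cube of side $\ell(S_{i})$. Given a mesh parameter $s\in(0,\ell(S_{i})]$ I would split each face into at most $\pi(d)(\ell(S_{i})/s)^{d-1}$ subcubes of side at most $s$. If $Q$ is such a subcube then $\diam Q\leq\sqrt{d}\,s$, so using $\lipomeg(h)\leq 1$ and the concavity of $\omega$ (which yields $\omega(\sqrt{d}\,s)\leq\sqrt{d}\,\omega(s)$) the image $h(Q)$ lies in a ball of radius $\sqrt{d}\,\omega(s)$, whence $\cl{B}(h(Q),\delta)$ lies in a ball of radius $\sqrt{d}\,\omega(s)+\delta$. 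Summing the volumes of the at most $2d\,\pi(d)(\ell(S_{i})/s)^{d-1}$ such balls gives
\[
\leb\br*{\cl{B}\br*{\partial h(S_{i}),\delta}}\leq\pi(d)\br*{\frac{\ell(S_{i})}{s}}^{d-1}\br*{\omega(s)+\delta}^{d}.
\]

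Finally I would take the mesh $s:=\tau:=\omega(\delta)=\omega\br*{\varepsilon\omega(\ell(S_{i}))}$. Since $\omega(t)\geq t$ throughout its domain, $\omega(s)=\omega(\omega(\delta))\geq\omega(\delta)=s\geq\delta$, so $\omega(s)+\delta\leq2\omega(s)=2\omega(\tau)$, and using $\leb(S_{i})=\ell(S_{i})^{d}$,
\[
\leb\br*{\cl{B}\br*{\partial h(S_{i}),\delta}}\leq\pi(d)\br*{\frac{\ell(S_{i})}{\tau}}^{d-1}\omega(\tau)^{d}=\pi(d)\,\frac{\omega(\tau)^{d}}{\ell(S_{i})\,\tau^{d-1}}\,\leb(S_{i}),
\]
which is exactly the asserted bound. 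The main (indeed, only) content is this second step together with the mesh choice $s=\omega(\delta)$: one has to make the exponents of $\ell(S_{i})$ and $s$ and the two nested applications of $\omega$ combine into the precise right-hand side of the statement, whereas Step~1 and the inequality $\abs{\leb A-\leb B}\leq\leb(A\,\Delta\,B)$ are routine. The degenerate situations — where $\delta$, $\tau$ or $\sqrt{d}\,\tau$ leave the domain of $\omega$, or where $\tau>\ell(S_{i})$ so the mesh must be capped at $\ell(S_{i})$ — are absorbed by the cruder estimate $\leb(h(S_{i}))\leq\pi(d)\,\omega(\ell(S_{i}))^{d}$ (from $\diam h(S_{i})\leq\pi(d)\,\omega(\ell(S_{i}))$); they cost only a sufficiently small upper bound on $a_{\omega}$ and in any case never arise in the applications to Lemmas~\ref{lemma:geometric} and~\ref{lemma:M1}, where $\varepsilon$ is taken small.
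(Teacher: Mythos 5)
Your proof is correct and follows the same overall strategy as the paper's: reduce to comparing $h$ with a translate of $h(\cdot+\tfrac{c}{N}\mb{e}_{1})$ that is $\varepsilon\omega(\ell(S_i))$-close to $h$ in the uniform norm, localise the volume discrepancy near the boundary by a topological argument, and then cover by $\pi(d)(\ell(S_i)/s)^{d-1}$ pieces with the mesh $s=\omega(\varepsilon\omega(\ell(S_i)))$, each contributing volume $\pi(d)\,\omega(s)^{d}$ via $\lipomeg(h)\leq 1$ and concavity. The one genuine (if mild) divergence is in the localisation step: the paper's Lemma~\ref{lemma:bilcubvol} works in the domain, showing via invariance of domain and the $\omega^{-1}$ lower bound that $f_{1}([S]_{\omega(\varepsilon\omega(\lambda))})\subseteq f_{2}(S)$, so the discrepancy is the $f_1$-image of a collar of $S$; you instead work in the target, invoking the degree-theoretic Lemma~\ref{lem:close_fun_close_img} to place $h(S_i)\,\Delta\,g(S_i)$ inside $\cl{B}(\partial h(S_i),\delta)$ and then fattening the images $h(Q)$ by $\delta$, which is harmless since $\delta\leq\omega(\delta)=s\leq\omega(s)$. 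Your variant has the small advantage of never using the inverse modulus of $h$ (only injectivity and the forward $\omega$-bound), at the cost of importing the degree lemma; the paper's collar argument is self-contained within Section~4 but does use $\bilipomeg(h)\leq 1$. Your handling of the degenerate regimes (where $\omega(\delta)$ or $\sqrt{d}\,\omega(\delta)$ leaves $(0,a_{\omega})$, or the mesh exceeds $\ell(S_i)$) is no less careful than the paper's, which silently assumes the same non-degeneracy in its covering count.
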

The appearance of $\Lambda(d)$ above should be interpreted as explained in the Notation paragraph at the beginning of Section~\ref{section:geometric}; it is a general purpose constant whose exact value is irrelevant for the rest of the paper.

\short
\begin{proof}
	If, in addition to all the conditions in the statement, it holds that $\ell(S_i)>2\omega(\varepsilon\omega(\ell(S_i)))$, it is enough to follow \cite[Proof of Lem.~3.4 and B.2, p.~611, p.~639]{DKK2018} only replacing all Lipschitz estimates in \cite{DKK2018} with $\omega$-continuity estimates; thus, we omit the details. This yields the bound
	\begin{linenomath}
		\begin{equation*}
			\left|\leb (h(S_{i}))-\leb (h(S_{i+1}))\right|\leq \Lambda(d)\left(\frac{\omega(\omega(\varepsilon\omega(\ell(S_i))))^{d}}{\ell(S_i)\omega(\varepsilon\omega(\ell(S_i)))^{d-1}}\right)\leb(S_i).
		\end{equation*}
	\end{linenomath}
	This bound is stronger than the bound claimed in the statement of Lemma~\ref{lemma:volume}, which follows from the inequalities
	\[
	\frac{\omega(\omega(\varepsilon\omega(\ell(S_i))))}{\omega(\varepsilon\omega(\ell(S_i)))}\leq\frac{\omega(\varepsilon\omega(\ell(S_i)))}{\varepsilon\omega(\ell(S_i))}\leq\frac{\omega(\varepsilon)\omega(\omega(\ell(S_i)))}{\varepsilon\omega(\ell(S_i))}\leq\frac{\omega(\varepsilon)\omega(\ell(S_i))}{\varepsilon\ell(S_i)}.
	\]
	The first and the third inequalities in the chain above are applications of the inequality $\omega(\omega(t))=\omega\left(\frac{\omega(t)}{t}\cdot t\right)\leq\frac{\omega(t)}{t}\cdot\omega(t)$, which holds due to the concavity of $\omega$ and the fact that $\omega(t)\geq t$. The second inequality in the chain is due to the submultiplicativity of $\omega$.
	
	In the case that $\ell(S_i)\leq 2\omega(\varepsilon\omega(\ell(S_i)))$, it is enough to use the trivial bound
	\begin{linenomath}
		\begin{equation*}
			\left|\leb (h(S_{i}))-\leb (h(S_{i+1}))\right|\leq \Lambda(d)\omega(\ell(S_i))^d=\Lambda(d)\left(\frac{\omega(\ell(S_i))}{\ell(S_i)}\right)^d\leb(S_i),
		\end{equation*}
	\end{linenomath}
	the inequality $\omega(\varepsilon)\geq \varepsilon$ and the fact that, in the present case, $\omega(\ell(S_i))\leq 2\omega(\omega(\varepsilon\omega(\ell(S_i))))$.
\end{proof}
\fi\normalcolor
\arxiv

The proof of Lemma~\ref{lemma:volume} is a simple modification of that of \cite[Lem.~3.4]{DKK2018}. It is based on the following basic fact.
\begin{lemma}\label{lemma:bilcubvol}
	Let $\omega\in\mc{M}$, $0<\lambda<a_{\omega}$, $S\in \mathcal{Q}^{d}_{\lambda}$ and $f_{1},f_{2}\colon S\to \R^{d}$ be bi-$\omega$-mappings with $\bilipomeg(f_{i})\leq 1$ for $i=1,2$. Let $\varepsilon\in (0,1)$ and suppose that $2\omega\left(\varepsilon\omega(\lambda)\right)<\lambda$ and
	\begin{linenomath}
		\begin{equation}\label{eq:translation2}
			\left\|f_{2}(\mb{x})-f_{1}(\mb{x})\right\|_{\infty}\leq\varepsilon\omega(\lambda).
		\end{equation}
	\end{linenomath}
	Then
	\begin{linenomath}
		\begin{equation*}
			\abs{\leb (f_{1}(S))-\leb (f_{2}(S))}\leq \Lambda(d)\left(\frac{\omega(\omega(\varepsilon\omega(\lambda)))^{d}}{\lambda\omega(\varepsilon\omega(\lambda))^{d-1}}\right)\leb(S).
		\end{equation*}
	\end{linenomath}
\end{lemma}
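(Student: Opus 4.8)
The plan is to adapt the proof of \cite[Lemma~3.4]{DKK2018}, replacing every Lipschitz estimate by the corresponding $\omega$-continuity estimate, and to phrase it so that the symmetric difference $f_1(S)\Delta f_2(S)$ is squeezed between the $f_i$-images of a thin frame around $\partial S$. First I would record that each $f_i$, being a bi-$\omega$-mapping, is a continuous injection, hence (by invariance of domain) a homeomorphism onto its image with $f_i(\partial S)=\partial f_i(S)$; after composing with the affine bijection $I^d\to S$, Lemma~\ref{lem:close_fun_close_img} together with \eqref{eq:translation2} yields
\[
f_2(S)\setminus f_1(S)\subseteq\cl{B}(f_1(\partial S),\delta),\qquad f_1(S)\setminus f_2(S)\subseteq\cl{B}(f_2(\partial S),\delta),
\]
where $\delta:=\sqrt d\,\varepsilon\omega(\lambda)$ is an upper bound for $\lnorm{\infty}{f_1-f_2}$. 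If $y=f_2(x)$ lies within $\delta$ of $f_1(\partial S)$, pick $x'\in\partial S$ with $\lnorm{2}{f_2(x)-f_1(x')}\le\delta$; then $\lnorm{2}{f_2(x)-f_2(x')}\le 2\delta$, and the $\omega$-continuity of $f_2^{-1}$ (with constant $\le1$) gives $\lnorm{2}{x-x'}\le\omega(2\delta)\le 2\omega(\delta)$ by concavity; the same works for $f_1$. Hence, writing $N:=\set{x\in S:\dist(x,\partial S)\le 2\omega(\delta)}$, we obtain $f_1(S)\Delta f_2(S)\subseteq f_1(N)\cup f_2(N)$, so that $\abs{\leb(f_1(S))-\leb(f_2(S))}\le\leb(f_1(N))+\leb(f_2(N))$.

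Next I would estimate $\leb(f_i(N))$ by a covering argument. Put $w:=2\omega(\delta)$. Since $\partial S$ is the union of $2d$ faces, each of $(d-1)$-dimensional measure $\lambda^{d-1}$, the frame $N$ is covered by at most $\pi(d)(\lambda/w)^{d-1}$ cubes of sidelength $w$; for each such cube $Q$ one has $\diam f_i(Q)\le\omega(\sqrt d\,w)\le\sqrt d\,\omega(w)$, so $f_i(Q)$ lies in a ball of radius $\sqrt d\,\omega(w)$ and $\leb(f_i(Q))\le\pi(d)\omega(w)^d$. Summing gives $\leb(f_i(N))\le\pi(d)\lambda^{d-1}w^{-(d-1)}\omega(w)^d$. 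Now $\varepsilon\omega(\lambda)\le\delta=\sqrt d\,\varepsilon\omega(\lambda)$, so monotonicity and concavity give $\omega(\varepsilon\omega(\lambda))\le w=2\omega(\delta)\le 2\sqrt d\,\omega(\varepsilon\omega(\lambda))$, i.e.\ $w$ is comparable to $\omega(\varepsilon\omega(\lambda))$ up to a $d$-dependent factor, and likewise $\omega(w)\le\pi(d)\,\omega(\omega(\varepsilon\omega(\lambda)))$. Substituting these bounds and using $\leb(S)=\lambda^d$ turns the previous estimate into
\[
\leb(f_i(N))\le\pi(d)\,\frac{\lambda^{d-1}\,\omega(\omega(\varepsilon\omega(\lambda)))^d}{\omega(\varepsilon\omega(\lambda))^{d-1}}=\pi(d)\br*{\frac{\omega(\omega(\varepsilon\omega(\lambda)))^d}{\lambda\,\omega(\varepsilon\omega(\lambda))^{d-1}}}\leb(S),
\]
and adding the contributions of $i=1,2$ gives the assertion.

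The main obstacle is the bookkeeping in the first step: one must correctly match each half of the symmetric difference with a neighbourhood of the \emph{correct} image boundary (this is exactly the content of Lemma~\ref{lem:close_fun_close_img}, and is the reason the $f_i$ must be homeomorphisms), and then invoke the full \emph{bi}-$\omega$ hypothesis — specifically the $\omega$-continuity of $f_i^{-1}$ — to pass from ``$y$ is $\delta$-close to $\partial f_i(S)$'' to ``$f_i^{-1}(y)$ is $\omega(\delta)$-close to $\partial S$''. A secondary point, handled throughout by the concavity inequality $\omega(ct)\le c\,\omega(t)$ for $c\ge1$, is to keep all multiplicative constants depending on $d$ alone; the routine degenerate range in which $\omega(\delta)$ is not small compared with $\lambda$ is disposed of separately by the crude bound $\leb(f_i(S))\le\pi(d)\,\omega(\lambda)^d$.
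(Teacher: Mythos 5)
Your argument is correct where the paper's is, and reaches the same final covering estimate, but it gets to the key intermediate step by a genuinely different mechanism. The paper works with inner parallel sets $[S]_{t}:=\set{\mb{x}\in S\colon \dist(\mb{x},\partial S)\geq t}$ and proves the chain of inclusions $f_{1}([S]_{t})\subseteq\cl{B}(f_{2}([S]_{t}),\varepsilon\omega(\lambda))\subseteq\cl{B}([f_{2}(S)]_{\omega^{-1}(t)},\varepsilon\omega(\lambda))\subseteq f_{2}(S)$ for $t=\omega(\varepsilon\omega(\lambda))$, using Brouwer's invariance of domain together with the $\omega^{-1}$ lower bound on $f_{2}$; the discrepancy is then at most $\leb(f_{1}(S\setminus[S]_{t}))$, and the collar $S\setminus[S]_{t}$ is covered by cubes of side $t$ exactly as in your second step. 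You instead trap $f_{1}(S)\Delta f_{2}(S)$ in a $\delta$-neighbourhood of the image boundaries via the degree-theoretic Lemma~\ref{lem:close_fun_close_img} and then pull back by $f_{i}^{-1}$ to land in a collar of width $2\omega(\delta)$ inside $S$. Both routes spend the bi-$\omega$ hypothesis at the analogous moment (the paper through the lower bound $\lnorm{2}{f_{2}(\mb{x})-f_{2}(\mb{y})}\geq\omega^{-1}(\lnorm{2}{\mb{x}-\mb{y}})$, you through the $\omega$-continuity of $f_{i}^{-1}$), and the two collars have comparable width, so the resulting bounds agree up to $\pi(d)$. Your version has the economy of recycling Lemma~\ref{lem:close_fun_close_img}; the paper's is self-contained within Section~4 and avoids degree theory.

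The one real weak point is your dismissal of the degenerate range. The count $\pi(d)(\lambda/w)^{d-1}$ of covering cubes (like the paper's $\pi(d)\lambda^{d-1}/\omega(\varepsilon\omega(\lambda))^{d-1}$) is only valid when the collar width does not exceed $\lambda$; beyond that the collar is all of $S$ and the honest count is $\pi(d)$, which is larger. Your proposed crude bound $\leb(f_{i}(S))\leq\pi(d)\omega(\lambda)^{d}$ closes the gap only if $\omega(\varepsilon\omega(\lambda))\leq\pi(d)\lambda$, which fails for general $\omega\in\mc{M}$. Indeed, for $\omega(t)=\sqrt{t}$ and $d\geq 3$ the stated inequality is actually false in that regime: taking $f_{1}(\mb{x})=(3\lambda)^{-1/2}\mb{x}$ and $f_{2}=(1-\varepsilon/2)f_{1}$ on $S=[0,\lambda]^{3}$ gives bi-$\omega$-mappings with $\bilipomeg\leq 1$ and $\lnorm{\infty}{f_{1}-f_{2}}\leq\varepsilon\omega(\lambda)$, yet $\abs{\leb(f_{1}(S))-\leb(f_{2}(S))}\geq\pi\varepsilon\lambda^{3/2}$, while the right-hand side of the lemma is $\pi\varepsilon^{-1/4}\lambda^{15/8}=o(\lambda^{3/2})$. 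So the degenerate case is not ``routine''; it cannot be repaired, only excluded. To be fair, the paper's own proof silently assumes the non-degenerate regime as well, and the lemma is only ever applied to moduli $\omega(t)\leq Lt\log(1/t)^{\gamma}$, for which $\omega(\varepsilon\omega(\lambda))\ll\lambda$ and the issue is vacuous. You should therefore either add the hypothesis $\omega(\varepsilon\omega(\lambda))\leq\lambda$ (harmless for the application) or drop the claim that the remaining range is handled by the crude bound.
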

\begin{proof}
	For a set $A\subseteq \R^{d}$ and $t>0$ we introduce the set
	\begin{linenomath}
		\begin{equation*}
			[A]_{t}:=\left\{\mb{x}\in A\colon \dist(\mb{x},\partial A)\geq t\right\}
		\end{equation*}
	\end{linenomath}
	of all points in the interior of $A$, whose distance to the boundary of $A$ is at least $t$.
	Using \eqref{eq:translation2} and the $\omega^{-1}$ bound on $f_{2}$ we deduce that
	\begin{linenomath}
		\begin{equation*}
			f_{1}([S]_{t})\subseteq\overline{B}(f_{2}([S]_{t}),\varepsilon\omega(\lambda))\subseteq \overline{B}([f_{2}(S)]_{\omega^{-1}(t)},\varepsilon\omega(\lambda))
		\end{equation*}
	\end{linenomath}
	for all $t>0$. For the second inclusion, we use Brouwer's Invariance of Domain~\cite[Thm.\ 2B.3]{Hatcher} in order to prove $f_{2}([S]_{t})\subseteq [f_{2}(S)]_{\omega^{-1}(t)}$. It follows that
	\begin{linenomath}
		\begin{equation*}\label{eq:ginc}
			f_{1}([S]_{\omega(\varepsilon\omega(\lambda))})\subseteq f_{2}(S).
		\end{equation*}
	\end{linenomath}
	Therefore
	\begin{linenomath}
		\begin{align*}
			\leb (f_{1}(S))-\leb (f_{2}(S))\leq \leb (f_{1}(S\setminus [S]_{\omega(\varepsilon\omega(\lambda))}))
		\end{align*}
	\end{linenomath}
	Because of the inequality $2\omega\left(\varepsilon\omega(\lambda)\right)<\lambda$, the set $[S]_{\omega(\varepsilon\omega(\lambda))}$ is non-empty and the set $S\setminus[S]_{\omega(\varepsilon\omega(\lambda))}$ can be covered by $\frac{\Lambda(d)\lambda^{d-1}}{\omega(\varepsilon\omega(\lambda))^{d-1}}$ cubes of side length $\omega(\varepsilon\omega(\lambda))$. Using the concavity of $\omega$, we get that the image of each of these cubes under $f_{1}$ is contained in a ball of radius $\sqrt{d}\omega(\omega(\varepsilon\omega(\lambda)))$.
	Thus, the total measure of $f_{1}(S\setminus [S]_{\omega(\varepsilon\omega(\lambda))})$ is at most
	\begin{linenomath}
		\begin{equation*}
			\Lambda(d)\left(\frac{\lambda}{\omega(\varepsilon\omega(\lambda))}\right)^{d-1}\omega(\omega(\varepsilon\omega(\lambda)))^{d}.
		\end{equation*}
	\end{linenomath}
	We conclude that
	\begin{linenomath}
		\begin{multline*}
			\leb (f_{1}(S))-\leb (f_{2}(S))\leq \Lambda(d)\left(\frac{\lambda}{\omega(\varepsilon\omega(\lambda))}\right)^{d-1}\omega(\omega(\varepsilon\omega(\lambda)))^{d}\\
			=\Lambda(d)\left(\frac{\omega(\omega(\varepsilon\omega(\lambda)))^{d}}{\lambda\omega(\varepsilon\omega(\lambda))^{d-1}}\right)\leb(S).
		\end{multline*}
	\end{linenomath}
	Since the above argument is completely symmetric with respect to $f_{1}$ and $f_{2}$, we also have
	\begin{linenomath}
		\begin{equation*}
			\leb (f_{2}(S))-\leb (f_{1}(S))\leq \Lambda(d)\left(\frac{\omega(\omega(\varepsilon\omega(\lambda)))^{d}}{\lambda\omega(\varepsilon\omega(\lambda))^{d-1}}\right)\leb(S).\qedhere
		\end{equation*}
	\end{linenomath}
\end{proof}

We can now prove Lemma~\ref{lemma:volume}.
\begin{proof}[Proof of Lemma~\ref{lemma:volume}]
	In the case that $\ell(S_i)\leq 2\omega(\varepsilon\omega(\ell(S_i)))$, it is enough to use the trivial bound
	\begin{linenomath}
		\begin{equation*}
			\left|\leb (h(S_{i}))-\leb (h(S_{i+1}))\right|\leq \Lambda(d)\omega(\ell(S_i))^d=\Lambda(d)\left(\frac{\omega(\ell(S_i))}{\ell(S_i)}\right)^d\leb(S_i),
		\end{equation*}
	\end{linenomath}
	the inequality $\omega(\varepsilon)\geq \varepsilon$ and the fact that, in the present case, $\omega(\ell(S_i))\leq 2\omega(\omega(\varepsilon\omega(\ell(S_i))))$.
	
	In the remaining case we have $\ell(S_i)> 2\omega(\varepsilon\omega(\ell(S_i)))$. To proceed, we define a translation $\phi\colon h([0,c]\times[0,c/N]^{d-1})\to\R^{d}$ by
	\begin{linenomath}
		\begin{equation*}
			\phi(h(\mb{x})):=h(\mb{x})+\frac{1}{N}(h(c\mb{e}_{1})-h(\mb{0})), \qquad \mb{x}\in[0,c]\times[0,c/N]^{d-1}.
		\end{equation*}
	\end{linenomath}
	Let the mappings $f_{1}\colon S_{i}\to\R^{d}$, $f_{2}\colon S_{i}\to\R^{d}$ be defined by $f_{1}:=\phi\circ h$ and $f_{2}(\mb{x}):=h(\mb{x}+\frac{c}{N}\mb{e}_{1})$. Then $f_{1},f_{2}$ are both bi-$\omega$-mappings of the cube $S_{i}\in\mathcal{Q}_{c/N}^{d}$ which satisfy $\lnorm{\infty}{f_{1}-f_{2}}\leq\varepsilon \omega\left(\frac{c}{N}\right)$, due to \eqref{eq:translation}. Moreover, the inequality of the present case is precisely the condition $2\omega\left(\varepsilon\omega(\lambda)\right)<\lambda$ of Lemma~\ref{lemma:bilcubvol} for $\lambda=\frac{c}{N}=\ell(S_{i})$. Applying Lemma~\ref{lemma:bilcubvol} and the identities $\leb(f_{1}(S_{i}))=\leb(h(S_{i}))$ and $f_{2}(S_{i})=h(S_{i+1})$, we get that
	\begin{linenomath}
		\begin{equation*}\label{eq:volumebound1}
			\left|\leb (\cancel{\phi(h(S_{i}))}h(S_{i}))-\leb (h(S_{i+1}))\right|\leq \Lambda(d)\left(\frac{\omega(\omega(\varepsilon\omega(\ell(S_i))))^{d}}{\ell(S_i)\omega(\varepsilon\omega(\ell(S_i)))^{d-1}}\right)\leb(S_i).
		\end{equation*}
	\end{linenomath}
	This bound is stronger than the bound claimed in the statement of Lemma~\ref{lemma:volume}, which follows from the inequalities
	\[
	\frac{\omega(\omega(\varepsilon\omega(\ell(S_i))))}{\omega(\varepsilon\omega(\ell(S_i)))}\leq\frac{\omega(\varepsilon\omega(\ell(S_i)))}{\varepsilon\omega(\ell(S_i))}\leq\frac{\omega(\varepsilon)\omega(\omega(\ell(S_i)))}{\varepsilon\omega(\ell(S_i))}\leq\frac{\omega(\varepsilon)\omega(\ell(S_i))}{\varepsilon\ell(S_i)}.
	\]
	The first and the third inequalities in the chain above are applications of the inequality $\omega(\omega(t))=\omega\left(\frac{\omega(t)}{t}\cdot t\right)\leq\frac{\omega(t)}{t}\cdot\omega(t)$, which holds due to the concavity of $\omega$ and the fact that $\omega(t)\geq t$. The second inequality in the chain is due to the submultiplicativity of $\omega$.
\end{proof}
\fi\normalcolor

\subsection*{Proof of Lemma~\ref{lemma:geometric_amalg}.}

Finally, we collect together the results of the present section to give a proof of Lemma~\ref{lemma:geometric_amalg}. For convenience, we divide Lemma~\ref{lemma:geometric_amalg} into two lemmas (\ref{lemma:geometric} and \ref{lemma:M1}), which we prove separately.
\begin{lemma}\label{lemma:geometric}
	Let $\alpha>0$ and $\omega\in\M$ satisfy
	\begin{linenomath}
		\begin{equation*}
		\omega(t)\leq t\left(\log\frac{1}{t}\right)^{\alpha},\qquad \text{for all $t\in(0,a_{\omega})$}. 
		\end{equation*}
	\end{linenomath}
	Let $d,k\in\N$, $d\geq 2$, \hl{$c,\varepsilon\in(0,a_{\omega})$} and $L\geq 1$. Then there exists $r=r(d,L\sqrt{k}\omega,\varepsilon,c)\in\N$ such that for every non-empty open ball $U\subseteq\R^{d}$ of radius at least $2c\sqrt{d}$ there exist finite tiled families $\Sq_{1},\Sq_{2},\ldots,\Sq_{r}$ of cubes contained in $U$ with the following properties:
	\begin{enumerate}
		\item\label{lemma:geometric1} For each $1\leq i<r$ and each cube $S\in\Sq_{i}$
		\begin{linenomath}
			\begin{equation*}
			\leb \Biggl(S\cap\bigcup_{j=i+1}^{r}\bigcup\Sq_{j}\Biggl)\leq \poly{\varepsilon}\leb (S). 
			\end{equation*}
		\end{linenomath}
		\item\label{lemma:geometric2} For any $k$-tuple $(h_{1},\ldots,h_{k})$ of bi-$\omega$-mappings $h_{j}\colon U\to\R^{d}$ for which $\max \mathfrak{L}_{\omega}(h_{j})\leq L$ there exist $i\in[r]$ and $\mb{e}_{1}$-adjacent cubes $S,S'\in \Sq_{i}$ such that
		\begin{linenomath}
			\begin{equation*}
			\frac{\abs{\leb(h_j(S))-\leb(h_j(S'))}}{\leb(S)}\leq \upsilon(d,\omega,L,k,\varepsilon,\ell(S))
			\end{equation*}
		\end{linenomath}
		for all $j\in[k]$, where 
		\begin{linenomath}		
			\begin{equation*}
			\upsilon(d,\omega,L,k,\varepsilon,\ell(S)):=
			\Lambda(d,L,k)\frac{\omega\br*{\omega\br*{\varepsilon\omega(\ell(S))}}}{\ell(S)}\left(\frac{\omega(\varepsilon)\omega(\ell(S))}{\varepsilon\ell(S)}\right)^{d-1}
			\end{equation*}
		\end{linenomath}
	and $\Lambda(d,L,k)>0$ is a constant depending only on $d,L$ and $k$.
	\end{enumerate}
\end{lemma}
The behaviour of the right-hand side of the inequality in statement~\ref{lemma:geometric2} depends on $\omega$; the statement is most powerful for those moduli $\omega$, for which the expression goes to zero with $\varepsilon$. The work of \cite{BK1} (see also \cite{DKK2018}) implies that for Lipschitz moduli, i.e., those that satisfy $\omega(x)\leq Lx$, $L\geq 1$, it indeed goes to zero. On the other hand, it follows from the work of Rivi\`ere and Ye~\cite[Thm.~1]{RY}
that for any $\alpha<1$ the expression \textit{cannot} go to zero for any $\omega(x)\geq x^\alpha$, i.e., for Hölder moduli of continuity. This is because otherwise one could use a~construction similar to that of Theorem~\ref{thm:non_rlz_dsty} to construct continuous Hölder non-realisable densities, which, however, do not exist by \cite{RY} (see also McMullen~\cite[Sec.~5]{McM}). 
We will show that the right-hand side of the inequality in statement~\ref{lemma:geometric2} converges to zero for some moduli lying strictly between the Lipschitz and the Hölder moduli of continuity. 

Note that in the parameter $r$ of Lemma~\ref{lemma:geometric} we consider the modulus $L\sqrt{k}\omega$ instead of $\omega$. This is because we will view the $k$-tuple $h_{1},\ldots,h_{k}$ as a single mapping $(h_{1},\ldots,h_{k})\colon \R^{d}\to\R^{kd}$ and this single mapping has modulus of continuity $L\sqrt{k}\omega$.

The following proof is an easy adaptation of the proof of \cite[Lem.~3.1]{DKK2018}.
\begin{proof}[Proof of Lemma~\ref{lemma:geometric}]
	Let $\overline{\omega}(t):=L\sqrt{k}\omega(t)$ with $a_{\overline{\omega}}=a_\omega$. Then $\overline{\omega}$ clearly belongs to $\mc{M}$. Moreover, by Lemma~\ref{lemma:rbound} we have $\overline{\omega}\in\mc{M}_{0}$.
	Let the sequences $(N_{i})_{i=1}^{\infty}$, $(M_{i})_{i=1}^{\infty}$, $(c_{i})_{i=1}^{\infty}$ and the number $r=r(d,\overline{\omega},\varepsilon,c)\in\N$ be defined according to Definition~\ref{def_par_subfam}, with all these values using $\overline{\omega}$ instead of $\omega$. Let $U\subseteq \R^{d}$ be an open ball of radius at least $2c\sqrt{d}$. Since the conclusion of Lemma~\ref{lemma:geometric} is invariant under translation of the set $U\subseteq \R^{d}$, we may assume that $B(\mb{0},2c\sqrt{d})\subseteq U$ so that
	\begin{linenomath}
		\begin{equation*}
		[0,c]\times[0,c/N_{1}]^{d-1}\subseteq U.
		\end{equation*} 
	\end{linenomath}
	We are now ready to define the families of cubes $\Sq_{1},\ldots,\Sq_{r}$, making use of the sequences $(N_{i})_{i=1}^{\infty}$ and $(c_{i})_{i=1}^{\infty}$.
	\begin{define}\label{construction}
		For each $i\in[r]$ we define the family $\Sq_{i}\subseteq \mathcal{Q}_{c_{i}/N_{i}}$ as the collection of all cubes of the form
		\begin{linenomath}
			\begin{equation*}
			\br*{\sum_{j=1}^{i}\mb{z}_{j}}+\hl{\Biggl(}\left[\frac{(l-1)c_{i}}{N_{i}},\frac{lc_{i}}{N_{i}}\right]\times\left[0,\frac{c_{i}}{N_{i}}\right]^{d-1}\hl{\Biggr)}
			\end{equation*}
		\end{linenomath}
		where $\mb{z}_{1}:=\mb{0}$ and $\mb{z}_{j+1}\in c_{j+1}\Z^{d}\cap\hl{\Big(}[0,c_{j}-c_{j+1}]\times[0,\frac{c_{j}}{N_{j}}-c_{j+1}]^{d-1}\hl{\Big)}$ for each $j\geq 1$ and \hl{$l\in[N_{i}]$}. 
	\end{define}

	Let us verify that the above defined families $\Sq_{1},\ldots,\Sq_{r}$ satisfy condition~\ref{lemma:geometric1} in the statement of Lemma~\ref{lemma:geometric}. 
	It is immediate from Definition~\ref{construction} \hl{and the definiton of $c_{i}$ in Definition~\ref{def_par_subfam}} that
	\begin{linenomath}
		\begin{equation*}\label{eq:nested}
		\bigcup \Sq_{r}\subseteq \bigcup \Sq_{r-1}\subseteq\ldots\subseteq \bigcup\Sq_{1}.
		\end{equation*}
	\end{linenomath}
	Thus, given $1\leq i<r$ and $S\in\Sq_{i}$, we have that 
	\begin{linenomath}
		\begin{equation*}
		S\cap\bigcup_{j=i+1}^{r}\bigcup\Sq_{j}\subseteq S\cap \bigcup\Sq_{i+1}.
		\end{equation*}
	\end{linenomath}
	Therefore, computing the volume of the latter set comes down to counting the number of cubes in $\Sq_{i+1}$ that intersect $S$. For the simple counting argument required, we refer the reader to \cite[p.~613--614]{DKK2018}. \jl{There} the argument is given in a less general situation where $N_{i}=N_{i+1}=N$. It gives the bound
	\begin{linenomath}
		\begin{equation*}
		\leb \br*{S\cap \bigcup_{j=i+1}^{r}\bigcup\Sq_{j}}\leq\frac{(M_{i}+1)^{d}}{M_{i}^{d}N_{i+1}^{d-1}}\left(\frac{c_{i}}{N_{i}}\right)^{d}
		\leq 2^{d}\frac{1}{N_{i+1}}\leb(S)\leq\poly{\varepsilon}\leb (S),
		\end{equation*}
	\end{linenomath}
	where, for the latter two inequalities, we use $\leb (S)=(c_{i}/N_i)^{d}$ and the bound $N_{i+1}\geq\hl{\frac{1}{\poly{\varepsilon}}}$,
	\hl{which comes from Lemma~\ref{lemma:parameters} and $c_{i+1}\in (0,a_{\omega})\subseteq (0,\frac{1}{2})$}.
	Thus, statement~\ref{lemma:geometric1} is satisfied.
	
	Turning now to statement~\ref{lemma:geometric2}, we consider a $k$-tuple $(h_{1},\ldots,h_{k})$ of bi-$\omega$-mappings $h_{i}\colon U\to\R^{d}$ with $\max_{i\in[k]}\bilipomeg(h_{i})\leq L$.	
	We define a combined mapping $g\colon U\to \R^{kd}$ co-ordinate-wise by
	\begin{linenomath}
		\begin{equation*}
		g^{((i-1)d+j)}(\mb{x}):=h_{i}^{(j)}(\mb{x})
		\end{equation*}
	\end{linenomath}
	for $i\in[k]$ and $j\in[d]$. It is straightforward to verify that $g$ is a bi-$\omega$-mapping with $\bilipomeg(g)\leq L\sqrt{k}$.
	This, in turn, implies that $g$ is a bi-$\overline{\omega}$-mapping with $\bilipgen{\overline{\omega}}(g)\leq 1$.
	
	The conditions of Lemma~\ref{lemma:terminate} are now satisfied for $d$, $\overline{\omega}$, $\varepsilon$, $(N_{i})_{i=1}^{\infty}$, $(M_{i})_{i=1}^{\infty}$, $(c_{i})_{i=1}^{\infty}$, $n=kd$, $g\colon [0,c_1]\times[0,c_1/N_{1}]^{d-1}\to\R^{kd}$ and $r=r(d,\overline{\omega},\varepsilon,c)$.
	Let $p\in [r]$ and $\mb{z}_{1},\ldots,\mb{z}_{p}\in\R^{d}$ be given by the conclusion of Lemma~\ref{lemma:terminate}. Then statement~\ref{dich1} of Lemma~\ref{lemma:dichotomy} holds for the mapping $g_{p}\colon [0,c_{p}]\times[0,c_{p}/N_{p}]^{d-1}\to\R^{kd}$ defined by~\eqref{eq:g_p}. Let $\Omega\subseteq[N_{p}-1]$ be given by the assertion of Lemma~\ref{lemma:dichotomy}, statement~\ref{dich1} for $g_{p}$. The co-ordinate functions of the mapping $g_{p}\colon [0,c_{p}]\times[0,c_{p}/N_{p}]^{d-1}\to\R^{kd}$ are defined by
	\begin{linenomath}
		\begin{equation*}
		g_{p}^{((t-1)d+s)}(\mb{x})=g^{((t-1)d+s)}\br*{\mb{x}+\sum_{j=1}^{p}\mb{z}_{j}}=h_{t}^{(s)}\br*{\mb{x}+\sum_{j=1}^{p}\mb{z}_{j}}
		\end{equation*}
	\end{linenomath}
	for $t\in[k]$, $s\in[d]$. Therefore for each $i\in\Omega$ and each $h_{t,p}\colon [0,c_{p}]\times[0,c_{p}/N_{p}]^{d-1}\to\R^{d}$ defined by $h_{t,p}(\mb{x}):=h_{t}(\mb{x}+\sum_{j=1}^{p}\mb{z}_{j})$ for $t\in[k]$, we have that $h=h_{t,p}$ satisfies inequality~\eqref{eq:translation} on $S_{i}:=\left[\frac{(i-1)c_{p}}{N_{p}},\frac{ic_{p}}{N_{p}}\right]\times\left[0,\frac{c_{p}}{N_{p}}\right]^{d-1}$.
	
	We fix $i\in\Omega$. Then the conditions of Lemma~\ref{lemma:volume} are satisfied for $\overline{\omega}$, $\varepsilon$, $d$, $N=N_{p}$, $c=c_{p}$, $h=h_{t,p}$ for each $t\in[k]$ and $i$. Hence, for $\lambda_{p}:=\ell(S_{i})=\frac{c_{p}}{N_{p}}$ we have
\begin{linenomath}
	\begin{align}\label{eq:omega_bar_volume}
		\left|\leb (h_{t,p}(S_{i}))-\leb (h_{t,p}(S_{i+1}))\right|&\leq \Lambda(d)\frac{\overline{\omega}(\overline{\omega}(\varepsilon\overline{\omega}(\lambda_p)))}{\lambda_p}\left(\frac{\overline{\omega}(\varepsilon)\overline{\omega}(\lambda_p)}{\varepsilon\lambda_p}\right)^{d-1}\leb(S_i),
	\end{align}
\end{linenomath}
	which can be bounded above using the concavity of $\overline{\omega}$ by
	\begin{linenomath}
		$$
		\Lambda(d,L,k)\left(\frac{\omega(\omega(\varepsilon\omega(\lambda_p)))}{\lambda_p}\right)\left(\frac{\omega(\varepsilon)\omega(\lambda_p)}{\varepsilon\lambda_p}\right)^{d-1}\leb(S_i).
		$$
	\end{linenomath}
	Set $S=\sum_{j=1}^{p}\mb{z}_{j}+S_{i}$ and $S'=\sum_{j=1}^{p}\mb{z}_{j}+S_{i+1}$. It is clear upon reference to Definition~\ref{construction} that $S$ and $S'$ are $\mb{e}_{1}$-adjacent cubes belonging to the family $\Sq_{p}$. Moreover, we have $h_{t}(S)=h_{t,p}(S_{i})$ and $h_{t}(S')=h_{t,p}(S_{i+1})$ for all $t\in[k]$. Therefore $S$ and $S'$ verify statement~\ref{lemma:geometric2} of Lemma~\ref{lemma:geometric} for the $k$-tuple $(h_{1},\ldots,h_{k})$. This completes the proof of Lemma~\ref{lemma:geometric}.
\end{proof}

\begin{sloppypar}
The final lemma refers to the notation of Lemma~\ref{lemma:geometric}.
The parameter $\kappa$ below represents the weakest possible upper bound of statement~\ref{lemma:geometric2} in Lemma~\ref{lemma:geometric}.
We emphasise that Lemma~\ref{lemma:geometric} and Lemma~\ref{lemma:M1} together imply Lemma~\ref{lemma:geometric_amalg}.
\end{sloppypar}
\begin{lemma}\label{lemma:M1}
	Let $d\geq 2$. Then there is $\alpha_0=\alpha_0(d)>0$	
	such that for $\omega\in\M$ of the form
	\begin{linenomath}
		\begin{equation*}
		\omega(t)\leq t\left(\log\frac{1}{t}\right)^{\alpha_0},\qquad \text{for all $t\in(0,a_{\omega})$},
		\end{equation*}
	\end{linenomath}
and
	\begin{linenomath}
		\begin{equation*}
		\kappa(\varepsilon):=\kappa(d,\omega,L,k,\varepsilon, c):=\sup_{i\in [r],\,S\in\mc{S}_{i}} \upsilon(d,\omega,L,k,\varepsilon,\ell(S))
		\end{equation*}
	\end{linenomath}
	we have
	\begin{linenomath}
		\begin{equation*}
		\lim_{\varepsilon\to 0}\kappa(\varepsilon)=0.
		\end{equation*}
	\end{linenomath}
\end{lemma}
\begin{proof}
	Let $\alpha>0$ and $\omega\in\M$ satisfy
	\begin{linenomath}
		\begin{equation*}
		\omega(t)\leq t\left(\log\frac{1}{t}\right)^{\alpha},\qquad \text{for all $t\in(0,a_{\omega})$}.
		\end{equation*}
	\end{linenomath}
	It suffices to show that there is some threshold $\alpha_{0}(d)>0$ so that whenever $\alpha\leq \alpha_{0}(d)$ the expression
	$\kappa(d,\omega,L,k,\varepsilon,c)$ is bounded above by $\poly{\varepsilon}$, and thus, goes to zero with $\varepsilon$.
	The right-hand side of the inequality in statement~\ref{lemma:geometric2} of Lemma~\ref{lemma:geometric} reads asymptotically as
	\[
	\frac{\omega(\omega(\varepsilon\omega(\ell(S_i))))}{\ell(S_i)}\left(\frac{\omega(\varepsilon)\omega(\ell(S_i))}{\varepsilon\ell(S_i)}\right)^{d-1}.
	\]

	We start with the first term.
	\begin{linenomath}
		\begin{align*}
			\begin{split}
				\frac{\omega(\omega(\varepsilon\omega(\ell(S_i))))}{\ell(S_i)}&=\frac{\omega(\omega(\varepsilon\omega(\ell(S_i))))}{\omega(\varepsilon\omega(\ell(S_i)))}\cdot\frac{\omega(\varepsilon\omega(\ell(S_i)))}{\varepsilon\omega(\ell(S_i))}\cdot\frac{\varepsilon\omega(\ell(S_i))}{\ell(S_i)}\\
				&\leq\varepsilon\left(\log\frac{1}{\omega(\varepsilon\omega(\ell(S_i)))}\right)^\alpha\left(\log\frac{1}{\varepsilon\omega(\ell(S_i))}\right)^\alpha\left(\log\frac{1}{\ell(S_i)}\right)^\alpha.
			\end{split}
		\end{align*}
	\end{linenomath}
	Each of the logarithms is at most $\log\frac{1}{\varepsilon\ell(S_i)}$.
	
	The second term can be bounded above as
	\[
	\left(\frac{\omega(\varepsilon)\omega(\ell(S_i))}{\varepsilon\ell(S_i)}\right)^{d-1}\leq \left(\left(\log\frac{1}{\varepsilon}\right)^\alpha\left(\log\frac{1}{\ell(S_i)}\right)^\alpha\right)^{d-1}
	\leq\left(\log\frac{1}{\varepsilon\ell(S_i)}\right)^{2\alpha(d-1)}.
	\]
	Combining the two bounds above, we infer
	\begin{linenomath}
		\begin{align}\label{eq:bound1}
		\begin{split}
		\frac{\omega(\omega(\varepsilon\omega(\ell(S_i))))}{\ell(S_i)}\left(\frac{\omega(\varepsilon)\omega(\ell(S_i))}{\varepsilon\ell(S_i)}\right)^{d-1}\leq
		\varepsilon\br*{\log\frac{1}{\varepsilon\ell(S)}}^{(2d+1)\alpha}.
		\end{split}
		\end{align}
	\end{linenomath}
	
	Set $\gamma=1$ and take $\alpha\in(0,1]$. By Corollary~\ref{cor:sidelength}, every cube $S\in\bigcup_{i=1}^r\mc{S}_i$ satisfies
	\begin{linenomath}$$
		\ell(S)\geq \br*{c\hl{\pl{d}{L,k}{\varepsilon}}}^{(r+1)^2}.
		$$
	\end{linenomath}
	This means that $\varepsilon\ell(S)$ can be bounded below \hl{by} $\br*{c\hl{\pl{d}{L,k}{\varepsilon}}}^{(r+1)^2}$, too. By Lemma~\ref{lemma:rbound}, we have that 
	\begin{linenomath}
		\begin{equation*}
		r:=r(d,L\sqrt{k}\omega,\varepsilon,c)\leq\frac{1}{c\hl{\pl{d}{L,k}{\varepsilon}}}.
		\end{equation*}
	\end{linenomath}
	We emphasise that the $\hl{\pl{d}{L,k}{\varepsilon}}$ expressions above are independent of $\alpha\in(0,1]$ and $c$.
	Plugging these two bounds in the inequality~\eqref{eq:bound1}, we get that
	\begin{linenomath}
		\begin{align*}
		\frac{\omega(\omega(\varepsilon\omega(\ell(S_i))))}{\ell(S_i)}&\left(\frac{\omega(\varepsilon)\omega(\ell(S_i))}{\varepsilon\ell(S_i)}\right)^{d-1}\\
		&\leq\varepsilon\br*{\log\br*{\br*{c\hl{\pl{d}{L,k}{\varepsilon}}}^{-(r+1)^2}}}^{(2d+1)\alpha}\\
		&\leq\varepsilon\br*{(r+1)^2\log\br*{\frac{1}{c\hl{\pl{d}{L,k}{\varepsilon}}}}}^{(2d+1)\alpha}\\
		&\leq\Lambda(d,c)\varepsilon\br*{\hl{\pl{d}{L,k}{\varepsilon}}^{-1}}^{(2d+1)\alpha}
		\end{align*}
	\end{linenomath}
	for $\varepsilon>0$ small enough. The last expression vanishes as $\varepsilon>0$ goes to zero provided $\alpha>0$ is chosen smaller than some threshold determined solely by $d$.
\end{proof}

\begin{remark}
	In the present work we have not required the explicit dependence on $k$ of the quantities \hl{$\upsilon$} and $\kappa$ of Lemmas~\ref{lemma:geometric} and \ref{lemma:M1}. However, the authors envisage potential future applications in which this dependence becomes relevant, particularly in relation to the open question concerning the `Feige sequence' which we discuss in the next section. Therefore we wish to place on record in this remark how the quantities \hl{$\upsilon$} and $\kappa$ depend on $k$ and comment on the necessary modifications of the proof needed to extract this dependence.
	
	For the quantity \hl{$\upsilon$} of Lemma~\ref{lemma:geometric} we have
	\begin{linenomath}
	\begin{equation*}
	\hl{\upsilon}\leq \Lambda(d)\hl{\pl{d}{}{L\sqrt{k}}}\frac{\omega(\omega(\varepsilon\omega(\ell(S))))}{\ell(S)}\left(\frac{\omega(\varepsilon)\omega(\ell(S))}{\varepsilon\ell(S)}\right)^{d-1}
	\end{equation*}
	\end{linenomath}
	and for $\omega(t)\leq t\left(\log\frac{1}{t}\right)^{\alpha}$ with $\alpha\in (0,1]$ the quantity $\kappa$ of Lemma~\ref{lemma:M1} satisfies
	\begin{linenomath}
	\begin{equation*}
	\kappa\leq \Lambda(d,c)\hl{\pl{d}{}{L\sqrt{k}}}\varepsilon\br*{\ply{d}{\varepsilon}^{-1}}^{(2d+1)\alpha}.
	\end{equation*}
	\end{linenomath}
	
	To get the bound on \hl{$\upsilon$}, we only have to use the inequality $\omega(L\sqrt{k}t)\leq L\sqrt{k}\omega(t)$, which comes from the concavity of $\omega$, to extract $L\sqrt{k}$ from the argument of $\omega$ in \eqref{eq:omega_bar_volume}. Note that in \eqref{eq:omega_bar_volume} we have $\overline{\omega}=L\sqrt{k}\omega$.
	
	Let $\omega(t)\leq t\left(\log\frac{1}{t}\right)^{\alpha}$ for some $\alpha>0$. For the bound on $\kappa$ it is necessary to make the dependencies of the parameters $\varphi$, $N_{0}$, $M$ and $M_{0}$ on $L$ (we will eventually apply this with $L$ replaced by $L\sqrt{k}$) in Lemma~\ref{lemma:parameters} explicit. It is straightforward to verify that \eqref{eq:parameters} then becomes 
	\begin{linenomath}
		\begin{multline*}
		\varphi=\frac{\poly\varepsilon}{\poly{L}},\qquad N_{0}=\frac{\poly{\log\frac{1}{c}}\poly{L}}{\poly{\varepsilon}},\\
		M=\frac{\poly{N}\poly{L}}{\poly{\varepsilon}},\qquad M_{0}=\frac{\poly{\log\frac{1}{c}}\poly{L}}{\poly{\varepsilon}},
		\end{multline*}
	\end{linenomath}
\hl{where in the equations above and in the remaining discussion $\poly{\cdot}$ always stands for $\pl{d,\gamma}{}{\cdot}$.}
	Implementing the arguments of Subsection~\ref{subsec:largeness} with these more precise expressions for the parameters leads to more precise bounds for the quantities $r$ and $\ell(S)$ with $S\in \Sq_{i}$, namely
	\begin{linenomath}	
	\begin{equation*}
	r(d,L\omega,\varepsilon,c)\leq\frac{\poly{L}}{c\poly{\varepsilon}}=\poly{L}\cdot r(d,\omega,\varepsilon,c)
	\end{equation*}
	\end{linenomath}
	and
	\begin{linenomath}
	\begin{equation*}
	\ell(S)\geq\left(\frac{\poly{\varepsilon}c}{\poly{L}}\right)^{(i+1)^2}\qquad \text{for all $S\in\Sq_{i}$.}
	\end{equation*}
	\end{linenomath}	
	The bound above on $\kappa$ is obtained by implementing the proof of Lemma~\ref{lemma:M1} with the more detailed bounds on $r$ and $\ell(S)$ stated above.
	Note that at this stage we consider the quantity $r(d,L\sqrt{k}\omega,\varepsilon,c)$.
\end{remark}

\section{Discussion and open problems.}
We stated and proved our results only for certain special families of moduli of continuity.
However, fixing a density $\rho$, the only information about a modulus $\omega(t)$ that determines whether there is a bi-$\omega$ solution $f$ to the pushforward equation
\begin{linenomath}
	\begin{equation}\label{eq:pfeq}
	f_\sharp\rho\leb=\rest{\leb}{f(I^d)}
	\end{equation}
\end{linenomath}
is the rate of growth of $\frac{\omega(t)}{t}$ as $t$ goes to $0$: \jl{i}t is clear that whenever one can find a modulus $\omega'(t)$ for which there is a bi-$\omega'$ non-realisable density $\rho$ and, at the same time, there is $t_0>0$ such that $\omega(t)\leq\omega'(t)$ for every $t\in (0,t_0)$, then $\rho$ is bi-$\omega$ non-realisable as well.

The techniques presented here yield that a generic continuous function $\rho$ is bi-$\omega$ non-realisable for $\omega(t)=t\br*{\log(1/t)}^{\alpha_0}$, where $0<\alpha_0<1$ is very small and depending on $d$.
It seems unlikely to us that the same technique could be used to prove the existence of bi-$\omega$ non-realisable functions with respect to $\omega(t)=t\log(1/t)$, say.
One of the key reasons is that in order to argue that the expression $\kappa(\varepsilon)=\kappa(d,\omega, L, k, \varepsilon)$ from Lemma~\ref{lemma:M1} stays at least bounded, one would need a very good upper bound on $r(\varepsilon)$, namely something as good as $O\br*{1/\sqrt[d]{\varepsilon}}$, probably even better. But this seems out of the reach of the present technique, because the
bound on $r(\varepsilon)$ we can obtain must be of order $\Omega\br*{\frac{1}{\varphi(\varepsilon)}}$. And the best bound on $\varphi(\varepsilon)$ for the Lipschitz modulus $\omega(t)=t$, even in dimension $d=1$, that we could come up with is of order $\Theta(\varepsilon^3)$.
While it would obviously be possible to get tighter bounds on various parameters at several places, we believe that these improvements could at best provide a quantitative estimate on $\alpha_0$, which would be much less than $1$ and could not settle the case of $\alpha_0\geq 1$.
\begin{quest}
Are there any bi-$\omega$ non-realisable continuous functions $I^d\to (0,\infty)$ for $\omega(t)=t\log(1/t)$?
\end{quest}

Thanks to Lemma~\ref{lemma:discrete_to_cts}, a positive answer to the above question would immediately yield \jl{an $\omega$-irregular} separated net. The same would be true if one provided a bi-$\omega$ non-realisable function $\rho\in L^\infty(I^d)$ with both its essential infimum and supremum in $(0,\infty)$.

In the present article, we have verified existence of densities $\rho$ excluding bi-$\omega$ solutions of \eqref{eq:pfeq}; the next natural task is to exclude solutions in the much larger class of $\omega$-mappings. For the Lipschitz modulus of continuity $\omega(t)=t$, Kopeck\'a and the authors achieved this in \cite{DKK2018}. This led to a negative answer of a question of Feige~\cite[Quest.~2.12]{Matousek_open} (see also \cite[Quest.~1.1]{DKK2018}). The result may be stated precisely as follows. For a set $S\subseteq\Z^{d}$ containing precisely $n^{d}$ points for some $n\in\N$ let $L_{S}$ denote the best Lipschitz constant of any (bijective) mapping of $S$ \emph{onto} the regular grid $\set{1,\ldots,n}^{d}$. Put differently, let
	\begin{linenomath}
		\begin{equation*}
		L_{S}:=\inf\set{\lip(f)\middle|\, \text{$f\colon S\to\set{1,\ldots,n}^{d}$ is a bijection}}.
		\end{equation*}
	\end{linenomath}
	The main result (Theorem~1.2) of \cite{DKK2018} states that the sequence
	\begin{linenomath}
		\begin{equation*}
		C_{n}:=\sup\set{L_{S}\colon S\subseteq \Z^{d},\, \abs{S}=n^{d}}, \qquad n\in\N,
		\end{equation*}
	\end{linenomath}
	is unbounded. We propose the name `Feige sequence' for the sequence $(C_{n})_{n=1}^{\infty}$. Whilst \cite{DKK2018} verifies that the Feige sequence is unbounded, there are no non-trivial bounds on its rate of growth. Moreover, the Lipschitz modulus of continuity $\omega(t)=t$ remains the weakest modulus of continuity for which it is known that there are bounded and bounded away from zero densities $\rho$ excluding $\omega$-continuous solutions $f$ of \eqref{eq:pfeq}. Thus, providing such densities $\rho$ for any strictly weaker modulus of continuity $\omega$ would be an interesting result and it seems plausible that this could also have implications for the problem of determining the asymptotics of the Feige sequence. We advertise this as a direction of possible future research.

\bibliographystyle{plain}
\bibliography{citations}

\noindent Institut für Mathematik\\
Universität Innsbruck\\
Technikerstraße 13,\\
6020 Innsbruck,\\
Austria\\[3mm]
\noindent Michael Dymond\\Mathematisches Institut\\
Universität Leipzig\\
PF 10 09 02\\
04109 Leipzig\\
Deutschland\\
\texttt{michael.dymond@math.uni-leipzig.de}\\[3mm]

\noindent Vojtěch Kaluža\\
IST Austria,\\
Am Campus 1,\\
3400 Klosterneuburg,\\
Austria\\
\texttt{vojtech.kaluza@ist.ac.at}
\end{document}